\theoremstyle{plain}
\numberwithin{equation}{section} 
\newtheorem{theorem}[subsection]{Theorem}
\newtheorem{proposition}[subsection]{Proposition}
\newtheorem{lemma}[subsection]{Lemma}
\newtheorem{corollary}[subsection]{Corollary}
\newtheorem{definition}[subsection]{Definition}
\newtheorem{conjecture}[subsection]{Conjecture}
\newtheorem{remark}[subsection]{Remark}
\newtheorem{question}[subsection]{Question}
\renewcommand{\leq}{\leqslant}
\renewcommand{\geq}{\geqslant}
\newsavebox{\proofbox}
\savebox{\proofbox}{\begin{picture}(7,7)  \put(0,0){\framebox(7,7){}}\end{picture}}
\newcommand\SL{\operatorname{SL}}
\newcommand\inte{\operatorname{int}}
\newcommand\GL{\operatorname{GL}}
\newcommand\Mat{\operatorname{Mat}}
\newcommand\ri{\operatorname{ri}}
\newcommand\Ad{\operatorname{Ad}}
\newcommand\diag{\operatorname{diag}}
\begin{document}

\title{Large deviation principle for random matrix products\protect}

\author{Cagri SERT}

\address{Departement Mathematik, ETH Z\"{u}rich, R\"{a}mistrasse 101, Z\"{u}rich, Switzerland}

\email{cagri.sert@math.ethz.ch}

\begin{abstract}
Under a Zariski density assumption, we extend the classical theorem of Cram\'{e}r on large deviations of sums of iid real random variables to random matrix products. 
\end{abstract}

\subjclass[2010]{60F10,20P05,22E46}
\keywords{Large deviation principle, random matrix products, reductive groups, joint spectrum}

\maketitle

\section{Introduction}

Let $S$ be a set of $d\times d$ real invertible matrices and $\mu$ be a probability measure on $S$. Let $X_{1}, X_{2}, \ldots $ be independent $S$-valued random variables with distribution $\mu$. Consider the random product $Y_{n}=X_{n}.\ldots.X_{1}$. One of the goals of the theory of random matrix products is to understand the limiting behaviour of this random product as $n$ tends to infinity. A convenient way to do this is to study the \textit{extensions} of classical limit theorems (law of large numbers, central limit theorem, Cram\'{e}r's theorem and so on) for the norm of this random product. More precisely, choose a norm $||.||$ on $\mathbb{R}^{d}$ and consider the associated operator norm $||.||$ on $\Mat_{d}(\mathbb{R})$ (the choice of norm is irrelevant to our discussion). One is interested in studying the probabilistic limiting behaviour of $\log ||Y_{n}||$. Note that when $d=1$, this is precisely a sum of independent identically distributed (iid) real random variables, i.e. the subject of study of classical limit theorems in probability theory. When $d>1$, there are at least two new aspects: the operation is no longer commutative and the log-norm functional is only \textit{sub}additive. In this article, we shall be working in a more general setting and we will consider a slightly more general multi-norm given by classical decompositions of Lie groups, which we now describe.

For the sake of exposition, let $G$ be a connected semisimple linear Lie group, e.g. $\SL(d,\mathbb{R})$ (more generally, we prove our results in the setting of a group of $\rm k$-points of a connected reductive algebraic group defined over a local field $\rm k$). The multi-norm that we shall consider comes from the classical Cartan decomposition: let $\mathfrak{g}$ be the Lie algebra of $G$, $\mathfrak{a}$ be a Cartan subalgebra in $\mathfrak{g}$ and $\mathfrak{a}^{+}$ be a chosen Weyl chamber in $\mathfrak{a}$. Let $K$ be a maximal compact subgroup of $G$ for which we have the Cartan decomposition $G=K\exp(\mathfrak{a}^{+})K$. This decomposition allows one to consider the mapping $\kappa:G \to \mathfrak{a}^{+}$, called the Cartan projection or multi-norm, satisfying for every $g \in G$, $g \in k \exp(\kappa(g))u$ for some $k,u \in K$. In the case of $G=\SL(d,\mathbb{R})$, this is the usual polar decomposition and for an element $g \in \SL(d,\mathbb{R})$, the multi-norm $\kappa(g)$ writes as  $\kappa(g)=(\log ||g||, \log \frac{||\wedge^{2}g||}{||g||}, \ldots, \log \frac{||\wedge^{d}g||}{||\wedge^{d-1}g||})$, where $\wedge^{k}\mathbb{R}^{d}$'s are endowed 
with their canonical Euclidean structures and $||.||$'s 
denote the associated operator norms. The components of $\kappa(g)$ are the logarithms of the singular values of $g$.

Now let $\mu$ be a probability measure on $G$ and $X_{1},X_{2},\ldots$ be $G$-valued iid random variables with distribution $\mu$. Consider the random product $Y_{n}$ and its multi-norm $\kappa(Y_{n})$. The first limit theorem that was proven for random matrix 
products is the analogue (extension) of the law of large numbers. Stating it in our setting, Furstenberg-Kesten's result  \cite{Furstenberg.Kesten} reads: if $\mu$ is a probability 
measure on $G$ with a finite first moment (i.e. $\int ||\kappa(g)|| \mu(dg) < \infty$ for some norm $||.||$ on $\mathfrak{a}$), 
then the $\mu$-random walk $Y_{n}=X_{n}.\ldots.X_{1}$  satisfies 
\begin{equation*}
\frac{1}{n} \kappa(Y_{n}) \overset{a.s.}{\underset{n \rightarrow \infty}{\longrightarrow}} \vec{\lambda}_{\mu} \in \mathfrak{a}
\end{equation*} where $\vec{\lambda}_{\mu}$ can be defined by 
this and is called the Lyapunov vector of $\mu$. Nowadays, this 
result is a corollary of Kingman's 
subadditive ergodic theorem. 

A second important limit theorem 
that was established in increasing generality by Tutubalin \cite{Tutubalin}, Le Page \cite{LePage}, Goldsheid-Guivarc'h \cite{Goldsheid.Guivarch}, and Benoist-Quint \cite{BQ.central}, \cite{BQpoly} is the central limit 
theorem (CLT). Benoist-Quint's CLT reads: if $\mu$ is a 
probability measure on $G$ with finite second order moment and 
such that the support of $\mu$ generates a Zariski-dense 
semigroup in $G$, then $\frac{1}{\sqrt{n}}(\kappa(Y_{n})-n\vec{\lambda}_{\mu})$ converges in distribution to a non-degenerate Gaussian law on $\mathfrak{a}$. A feature of this 
result is the Zariski density assumption which also appears in our result below. We note that the fact that the support $S$ of 
the probability measure $\mu$ generates a Zariski-dense 
semigroup can be read as: any polynomial that vanishes on $\cup_{n \geq 1} S^{n}$ also vanishes on $G$ (recall that when $d=1$, a subset is Zariski dense if and only if it 
is infinite). Some other limit theorems whose analogues have 
been obtained are the law of iterated logarithm and local 
limit theorems, for which we refer the reader to the nice 
books of Bougerol-Lacroix \cite{Bougerol.Lacroix} and more 
recently Benoist-Quint \cite{BQpoly}.

An essential and, until now, a rather incomplete aspect of these non-commutative limit theorems is concerned with large deviations. The main result in this direction is 
that of Le Page \cite{LePage}, (see also Bougerol \cite{Bougerol.Lacroix}) and its extension by Benoist-Quint \cite{BQpoly}, stating the exponential decay of probabilities of large deviations off the Lyapunov vector. Before 
stating this result, recall that a probability measure $\mu$ 
on $G$ is said to have a finite exponential moment, if there 
exists $\alpha>1$ such that $\int \alpha^{||\kappa(g)||} \mu(dg)<\infty$. We have 
\begin{theorem}[Le Page \cite{LePage}, Benoist-Quint \cite{BQpoly}] \label{LePage}
Let $G$ be as before, $\mu$ be a probability measure of 
finite exponential moment on $G$ whose support generates a 
Zariski-dense semigroup in $G$. Then, for all $\epsilon>0$, we have $\limsup_{n \rightarrow \infty} \frac{1}{n} \log \mathbb{P}(||\frac{1}{n}\kappa(Y_{n})-\vec{\lambda}_{\mu}||>\epsilon)<0$.
\end{theorem}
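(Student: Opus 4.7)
The strategy is to combine the spectral-gap machinery for the $\mu$-walk on the flag variety with a Chernoff/G\"artner-Ellis exponential estimate, applied coordinate-by-coordinate in $\mathfrak{a}$. For each linear functional $\varphi\in\mathfrak{a}^{*}$, I would first pass from the subadditive Cartan projection $\varphi(\kappa(Y_{n}))$ to the additive Iwasawa cocycle $\varphi(\sigma(Y_{n},\xi))$, where $\sigma\colon G\times\mathcal{F}\to\mathfrak{a}$ is defined on the flag variety $\mathcal{F}=G/P$. Since $\|\kappa(g)-\sigma(g,\xi)\|$ is uniformly bounded for $\xi$ outside a proper Schubert cell, and the walk steers away from that cell with exponentially small failure probability, it suffices to prove a large deviation bound for $\tfrac{1}{n}\varphi(\sigma(Y_{n},\xi))$ around $\varphi(\vec{\lambda}_{\mu})$.

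The additive cocycle lends itself to the Laplace-transform method. I would introduce the twisted transfer operators
\[
P_{s,\varphi}f(\xi)=\int_{G}e^{s\,\varphi(\sigma(g,\xi))}\,f(g\xi)\,d\mu(g)
\]
acting on the space $\mathcal{C}^{\gamma}(\mathcal{F})$ of $\gamma$-H\"older functions. The finite exponential moment of $\mu$ makes $s\mapsto P_{s,\varphi}$ an analytic family of bounded operators for $s$ in a complex neighbourhood of $0$. At $s=0$ it is the Markov operator $P_{\mu}$; under Zariski density the latter has a simple dominant eigenvalue $1$ with a gap on $\mathcal{C}^{\gamma}(\mathcal{F})$, via the irreducibility/proximality machinery of Guivarc'h--Le Page--Bougerol. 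By Kato's analytic perturbation theory $P_{s,\varphi}$ then retains a simple top eigenvalue $\lambda(s,\varphi)$ depending analytically on $s$. Standard spectral-projection bookkeeping yields
\[
\E\bigl[e^{s\,\varphi(\sigma(Y_{n},\xi))}\bigr]=\lambda(s,\varphi)^{n}\bigl(c(s,\xi)+o(1)\bigr)
\]
uniformly in $\xi$, with $\partial_{s}\log\lambda(s,\varphi)|_{s=0}=\varphi(\vec{\lambda}_{\mu})$ (Furstenberg--Kesten) and $\partial_{s}^{2}\log\lambda(s,\varphi)|_{s=0}>0$ (the non-degenerate asymptotic variance from the Benoist--Quint CLT).

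Optimising the Chernoff bound
\[
\mathbb{P}\bigl(\varphi(\sigma(Y_{n},\xi))/n>t\bigr)\leq e^{-n\sup_{s\geq 0}(st-\log\lambda(s,\varphi))+O(1)}
\]
over small $s>0$ then gives a strictly negative exponent for any $t>\varphi(\vec{\lambda}_{\mu})$, and the symmetric bound for $t<\varphi(\vec{\lambda}_{\mu})$ follows in the same way. Applying this to a finite family $\{\varphi_{i}\}$ whose common level sets enclose a polytope inside the $\epsilon$-ball around $\vec{\lambda}_{\mu}$, and combining with the initial reduction from $\kappa$ to $\sigma$, delivers the stated result. The principal obstacle, which I expect to absorb most of the technical work, is the spectral gap of $P_{\mu}$ together with its stability for small $s$: this is exactly where Zariski density is indispensable, and establishing uniform quasi-compactness of $P_{s,\varphi}$ on $\mathcal{C}^{\gamma}(\mathcal{F})$ under only a finite exponential moment (rather than compact support) is the core technical input on which the argument rests.
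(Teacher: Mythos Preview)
The paper does not give its own proof of this theorem: it is quoted as a prior result of Le Page and Benoist--Quint and used only as background (indeed, Remark~\ref{remark1}.3 notes that the ``unique zero'' part of the paper's main theorem is a \emph{reformulation} of this result, not a consequence of the paper's arguments). So there is no in-paper proof to compare against; your sketch is essentially the classical transfer-operator route of \cite{LePage} and \cite[Ch.~12--13]{BQpoly}, which is the intended reference.

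That said, one step in your outline is stated too loosely to be correct as written. You claim that $\|\kappa(g)-\sigma(g,\xi)\|$ is uniformly bounded for $\xi$ outside a fixed proper Schubert cell. This is false: for a fixed generic $\xi$ the gap $\chi(\kappa(g))-\chi(\sigma(g,\xi))$ is nonnegative but can be arbitrarily large as $g$ varies. What is true (and what Benoist--Quint actually use) is the one-sided bound $\chi(\sigma(g,\xi))\le\chi(\kappa(g))$ together with the identity $\chi(\kappa(g))=\sup_{\xi}\chi(\sigma(g,\xi))$, plus a quantitative estimate showing that the supremum is nearly achieved at the random endpoint $\xi=Y_{n}^{-1}\cdot\xi_{0}$ except on an event of exponentially small probability. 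In practice one does not reduce $\kappa$ to $\sigma$ up front; one first proves the large-deviation bound for the additive cocycle $\sigma$ via your transfer-operator argument, and then transfers it to $\kappa$ using the comparison lemma (cf.\ \cite[Lemma~13.13 and \S13.7]{BQpoly}). With that correction your plan matches the cited proofs.
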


In our first main result, under the usual Zariski density 
assumption, we prove the matrix extension of Cram\'{e}r's classical theorem about large deviations for iid real 
random variables. Let $X$ be a topological space and $\mathcal{F}$ be a $\sigma$-algebra on $X$.
\begin{definition}\label{defn.LDP} A sequence $Z_{n}$ of $X$-valued random 
variables is said to satisfy a large deviation principle 
(LDP) with rate function $I:X \longrightarrow [0, \infty]$, 
if for every measurable subset $R$ of $X$, we have $$
\underset{x \in \inte(R)}{-\inf I(x)} \leq \underset{n \rightarrow \infty}{\liminf} \frac{1}{n}\log \mathbb{P}(Z_{n} \in R) \leq \underset{n \rightarrow \infty}{\limsup} \frac{1}{n}\log \mathbb{P}(Z_{n} \in R) \leq \underset{x \in \overline{R}}{-\inf I(x)} $$ 
\end{definition} \noindent where, $\inte(R)$ denotes the interior and $\overline{R}$ the closure of $R$. 

With this definition, Cram\'{e}r's  
theorem says that the sequence of averages $Y_{n}=\frac{1}{n}\sum_{i=1}^{n}X_{i}$ of real iid random variables of finite 
exponential moment satisfies an LDP with a proper convex rate 
function $I$, given by the convex conjugate (Legendre 
transform) of the Laplace transform of $X_{i}$'s.  Our first main result reads

\begin{theorem} \label{intro.kanitsav1}
Let $G$ be a connected semisimple linear real algebraic group 
and $\mu$ be a probability measure of finite exponential 
moment on $G$, whose support generates a Zariski dense 
semigroup of $G$. Then, the sequence of random variables $\frac{1}{n}\kappa(Y_{n})$ satisfies an LDP with a proper 
convex rate function $I:\mathfrak{a} \longrightarrow [0,\infty]$ having a unique zero at the Lyapunov vector $\vec{\lambda}_{\mu}$ of $\mu$.
\end{theorem}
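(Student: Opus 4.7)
My plan is to follow a Gärtner--Ellis type argument. Define the logarithmic moment generating function
\[
\Lambda_{\mu}(\phi) \;:=\; \lim_{n \to \infty} \frac{1}{n}\log \int_{G} e^{\phi(\kappa(g))}\,\mu^{*n}(dg), \qquad \phi \in \mathfrak{a}^{*}.
\]
The goal is to show $\Lambda_{\mu}$ is well-defined, finite on a neighborhood of the origin, and essentially smooth (i.e.\ differentiable with gradient blowing up at the boundary of its effective domain). The rate function $I$ is then taken to be the Fenchel--Legendre transform $\Lambda_{\mu}^{*}$. Convexity and lower semicontinuity of $I$ come for free, while properness follows from the exponential moment assumption, which forces $\Lambda_{\mu}$ to grow at least linearly away from $0$. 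The unique zero at $\vec{\lambda}_{\mu}$ follows from $\nabla\Lambda_{\mu}(0)=\vec{\lambda}_{\mu}$ (the Furstenberg--Kesten law of large numbers) together with strict convexity.

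\textbf{Existence via submultiplicativity.} The Cartan projection satisfies a subadditivity-type inequality $\kappa(gh)\preceq \kappa(g)+\kappa(h)$ in the partial order dual to $\mathfrak{a}^{+}$ (up to a uniform error depending on $K$). Restricting $\phi$ to the dual cone $(\mathfrak{a}^{+})^{*}$, this yields a clean submultiplicative bound
\[
\int e^{\phi(\kappa(g))}\,\mu^{*(n+m)}(dg) \;\leq\; C_{\phi}\int e^{\phi(\kappa(g))}\,\mu^{*n}(dg)\cdot\int e^{\phi(\kappa(g))}\,\mu^{*m}(dg),
\]
and Fekete's lemma yields the limit $\Lambda_{\mu}(\phi)$. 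The finite exponential moment hypothesis gives finiteness of $\Lambda_{\mu}$ on a neighborhood of $0$. For general $\phi$ we extend by decomposing into pieces inside the dual cone.

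\textbf{Differentiability through tilted random walks.} The main obstacle is establishing differentiability of $\Lambda_{\mu}$ on the interior of its effective domain. For $\phi$ in the interior of $(\mathfrak{a}^{+})^{*}$, $e^{\phi(\kappa(g))}$ factors as a product $\prod_{i}\|\rho_{i}(g)\|^{t_{i}}$, where the $\rho_{i}$ are the irreducible representations of $G$ with fundamental highest weights and the $t_{i}\geq 0$ encode $\phi$. By Zariski density, each $\rho_{i}(\Supp\mu)$ generates a Zariski-dense subsemigroup of $\rho_{i}(G)$, so the Markov operator associated to $\rho_{i}(\mu)$ on the projective space of the representation space has a spectral gap (Le Page, Benoist--Quint). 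The natural way to access differentiability at an interior point $\phi$ is to introduce the tilted probability measure $d\mu_{n,\phi}(g) = e^{\phi(\kappa(g))}\,d\mu^{*n}(g) / Z_{n}(\phi)$ and analyze the Lyapunov vector of the corresponding random walk. Analytic perturbation theory applied to the transfer operators of the representations $\rho_{i}$ (quasi-compact with a spectral gap uniformly in $\phi$ on compact subsets of the interior of the effective domain) should yield that $\phi \mapsto \Lambda_{\mu}(\phi)$ is real analytic, and in particular strictly convex and essentially smooth.

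\textbf{Identifying the domain and concluding.} The hardest conceptual point is identifying the interior of the effective domain of $\Lambda_{\mu}$ (equivalently, the interior of $\{I<\infty\}$) with the correct geometric object, namely the interior of the \emph{joint spectrum} of $\Supp\mu$, i.e.\ the closure of $\bigcup_{n\geq 1}\frac{1}{n}\kappa(\Supp\mu^{*n})$. Here Zariski density again enters crucially: it guarantees, via Benoist's structure theorems, that this joint spectrum has nonempty interior in $\mathfrak{a}^{+}$ and coincides with the set of gradients $\nabla\Lambda_{\mu_{\phi}}(0)$ as $\phi$ ranges over the interior. Once essential smoothness is secured, Gärtner--Ellis delivers the full LDP, with exponential tightness (needed to upgrade a weak LDP to a tight LDP on $\mathfrak{a}$, if necessary) following directly from the finite exponential moment hypothesis via a Chebyshev bound on $\|\kappa(Y_{n})\|$.
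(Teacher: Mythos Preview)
Your strategy via G\"artner--Ellis has a genuine gap at the step you flag as ``the main obstacle'': establishing that $\Lambda_{\mu}$ is essentially smooth (differentiable on the interior of its effective domain, with gradient blowing up at the boundary). The transfer-operator perturbation argument you sketch is well understood for a \emph{single} proximal representation, where it yields analyticity of $t\mapsto \lim_{n}\frac{1}{n}\log\mathbb{E}[\|\rho(Y_{n})\|^{t}]$ near $t=0$. But the multidimensional case, where $\phi$ ranges over all of $\mathfrak{a}^{*}$ and one must control $\mathbb{E}[e^{\phi(\kappa(Y_{n}))}]$, does not reduce to a product of such one-dimensional problems: the weight $e^{\phi(\kappa(g))}$ is not multiplicative along the walk, and there is no single transfer operator whose leading eigenvalue encodes $\Lambda_{\mu}(\phi)$. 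Your ``decompose into pieces inside the dual cone'' and ``analytic perturbation uniformly in $\phi$'' steps are exactly where the difficulty lies, and they are not routine. Indeed, your argument would yield strict convexity of $\Lambda_{\mu}$ and hence of $I$, but the paper explicitly records strict convexity of $I$ as an \emph{open question} (see the Question after Corollary~1.10), with only partial analytic results due to Guivarc'h--Le~Page.

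The paper proceeds in the opposite logical direction and avoids G\"artner--Ellis entirely. It first proves a weak LDP directly, by verifying the criterion $I_{li}=I_{ls}$ of Theorem~\ref{existLDP}. The mechanism is not analyticity of $\Lambda_{\mu}$ but rather \emph{approximate additivity of $\kappa$ on Schottky semigroups}: by Abels--Margulis--Soifer (Theorem~\ref{AMS}) one can, at bounded cost in probability, condition the walk to land in an $(r,\epsilon)$-Schottky family, and on such families Benoist's estimates (Theorem~\ref{Best}, Proposition~\ref{SchottkyCartaniterate}) give $\|\kappa(g_{1}\cdots g_{n})-\sum\kappa(g_{i})\|\leq Cn$. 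This, together with independence of increments, forces $I_{li}=I_{ls}$. Convexity of $I$ is then proved by a separate (and again Schottky-based) argument in Section~\ref{section4}, exponential tightness upgrades the weak LDP to a full LDP with proper $I$, and only \emph{afterwards}, under a strong exponential moment, is $I$ identified with $\Lambda^{*}$ via Varadhan's lemma --- the reverse of the G\"artner--Ellis route.
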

\begin{remark}\label{remark1}
1. In Theorem \ref{kanitsav1}, without any moment assumptions on $\mu$, we also obtain a weaker result which is an extension of a result of Bahadur \cite{Bahadur} for iid real random variables.\\[3pt]
2. In Theorem \ref{kanitsav2}, under a stronger exponential moment condition, by exploiting 
convexity of $I$, we are able to identify the rate function $I$ with the convex conjugate of a limiting Laplace transform of the random variables $\frac{1}{n}\kappa(Y_{n})$.\\[3pt]
3. We note that the unique zero assertion for $I$ in the previous theorem is a reformulation of the exponential decay result expressed in Theorem \ref{LePage}. \\[3pt]
4. In Section \ref{section6}, we conjecture that a similar LDP holds for the Jordan projection $\lambda: G \to \mathfrak{a}^{+}$ in place of $\kappa$ (see the definition of Jordan projection below).
\end{remark}

\begin{remark}
Let us also mention that if the Zariski closure of the semigroup generated by the support of the measure $\mu$ is compact or unipotent, the conclusion of this theorem is still valid. In this case the rate function $I$ is degenerate, its effective support $D_{I}:=\{x \in \mathfrak{a}\, | \, I(x)<\infty\}$ equals $\{0\} \subset \mathfrak{a}$.  
\end{remark}

Coming back to the initial setting of norms of matrices, let $V$ be a finite dimensional real vector space and recall that a subgroup $\Gamma$ of $\GL(V)$ is said to be completely reducible if $V$ is a direct sum of $\Gamma$-irreducible subspaces. By the so-called contraction principles for LDP's, Theorem \ref{intro.kanitsav1} (see also Theorem \ref{kanitsav2}) yields the following corollary:

\begin{corollary}\label{intro.corol1}
Let $\mu$ be a probability measure with finite exponential moment on $\GL(V)$ and suppose that the group generated by the support of $\mu$ is completely reducible. Then the sequence of random variables $\frac{1}{n}\log ||Y_{n}||$ satisfies an LDP with a proper convex rate function $I:\mathbb{R} \to [0,\infty]$ having a unique zero at the first Lyapunov exponent of $\mu$.
\end{corollary}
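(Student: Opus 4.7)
The plan is to reduce to Theorem~\ref{intro.kanitsav1} via the Zariski closure and then pass to $\log\|Y_n\|$ through a continuous contraction. Let $H \subset \GL(V)$ denote the Zariski closure of the subgroup $\Gamma$ generated by the support of $\mu$. Complete reducibility of $\Gamma$ on $V$ is equivalent to $H$ being a reductive algebraic group; correspondingly $V = V_1 \oplus \cdots \oplus V_r$ decomposes into $H$-irreducibles. The measure $\mu$ is supported on $H$, generates a Zariski-dense semigroup in $H$ by construction, and inherits the finite exponential moment hypothesis, since $\|\kappa_H(g)\|$ is controlled by $\log\|g\|$ on $\GL(V)$ up to an additive constant depending only on $H$.

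Applying Theorem~\ref{intro.kanitsav1} in its reductive formulation (announced parenthetically in the introduction), I obtain an LDP for $\frac{1}{n}\kappa_H(Y_n)$ on $\mathfrak{a}_H$ with a proper convex rate function $I_\kappa$ whose unique zero is $\vec{\lambda}_{\mu}$. To transfer this to $\log\|Y_n\|$, fix a maximal compact subgroup $K$ of $H$ and choose a $K$-invariant norm on $V$ adapted to the direct sum, so that $\|g\| = \max_i \|g|_{V_i}\|$. Each factor satisfies $\log\|g|_{V_i}\| = \chi_i(\kappa_H(g))$, where $\chi_i \in \mathfrak{a}_H^*$ is the highest weight of the representation of $H$ on $V_i$. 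Hence $\log\|g\| = \phi(\kappa_H(g))$ with $\phi(x) := \max_i \chi_i(x)$ continuous (in fact piecewise linear and convex) on $\mathfrak{a}_H$.

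The contraction principle applied to $\phi$ then delivers an LDP for $\frac{1}{n}\log\|Y_n\|$ with rate function $I(t) = \inf\{I_\kappa(x) : \phi(x) = t\}$. Properness of $I$ follows from properness of $I_\kappa$ and the fact that $\phi$ is proper on the effective support of $I_\kappa$ (which lies in a cone where all $\chi_i$ grow), while the unique zero at $\lambda_1 = \phi(\vec{\lambda}_{\mu}) = \max_i \chi_i(\vec{\lambda}_{\mu})$, i.e.\ the first Lyapunov exponent (by Furstenberg--Kesten), is immediate from the unique zero of $I_\kappa$.

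The main obstacle is the convexity of $I$: because $\phi$ is merely piecewise linear rather than linear, the level sets $\{\phi = t\}$ are not convex and $I$ is not automatically convex as an infimum of $I_\kappa$ over them. I plan to resolve this by identifying $I$ with a Legendre transform, in the spirit of Theorem~\ref{kanitsav2} mentioned in Remark~\ref{remark1}. By submultiplicativity of the operator norm one has $\|Y_{m+n}\| \leq \|X_{m+1}\cdots X_{m+n}\|\cdot\|Y_m\|$, so $n \mapsto \log\E[\|Y_n\|^\alpha]$ is subadditive for each $\alpha \geq 0$; Fekete's lemma then produces a convex limit $\Lambda(\alpha) := \lim_n \frac{1}{n}\log\E[\|Y_n\|^\alpha]$. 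Combining this with Varadhan's lemma applied to the LDP for $\frac{1}{n}\kappa_H(Y_n)$ gives $\Lambda(\alpha) = \sup_{x \in \mathfrak{a}_H}\{\alpha\phi(x) - I_\kappa(x)\}$, and Legendre biduality then identifies $I$ with $\Lambda^*$, which is convex by construction. The LDP for $\frac{1}{n}\log\|Y_n\|$ obtained above and this Legendre identification must coincide on the interior of the effective domain, forcing $I$ to be convex throughout.
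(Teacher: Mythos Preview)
Your reduction to the reductive Zariski closure $H$ and the contraction through $\phi(x)=\max_i\overline{\chi}_i(x)$ is exactly the route the paper indicates (it simply says the corollary follows from the main theorem ``by the so-called contraction principles''), and you have correctly identified the one nontrivial point the paper leaves implicit: because $\phi$ is only piecewise linear, the contracted rate function $I(t)=\inf\{I_\kappa(x):\phi(x)=t\}$ is not automatically convex.

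The gap is in your proposed repair. From Varadhan's lemma you get $\Lambda(\alpha)=\sup_t\{\alpha t-I(t)\}=I^*(\alpha)$, hence $\Lambda^*=I^{**}$, the lower--semicontinuous convex envelope of $I$. Your sentence ``Legendre biduality then identifies $I$ with $\Lambda^*$'' therefore assumes precisely what you are trying to prove: biduality returns $I$ only if $I$ is already convex and lsc. Saying the two ``must coincide on the interior of the effective domain'' does not help either, since a nonconvex $I$ and its convex hull $I^{**}$ genuinely differ on open sets. (Concretely: take $H=\SL_2(\R)\times\SL_2(\R)$ acting on $V=\R^2\oplus\R^2$ and $\mu=\mu_1\otimes\mu_2$ a product; then $I_\kappa(x_1,x_2)=f_1(x_1)+f_2(x_2)$ with each $f_i$ convex, and for $t$ above both Lyapunov exponents one finds $I(t)=\min\bigl(f_1(t),f_2(t)\bigr)$, which is typically nonconvex.) Your subadditivity argument for $\Lambda$ only covers $\alpha\ge 0$ in any case, which is not enough to run a duality argument on all of $\R$.

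Two smaller points you should also nail down: the Zariski closure $H$ need not be connected, while the main theorem is stated for connected reductive $G$, so you must explain how to pass to $H^\circ$; and Varadhan's lemma in the form you invoke needs the tail condition $\limsup_n\frac1n\log\E[e^{\gamma\alpha\log\|Y_n\|}]<\infty$ for some $\gamma>1$, which under a mere finite (rather than strong) exponential moment is only available for $\alpha$ in a bounded range.
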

We note that Remark \ref{remark1} also applies to this corollary.

In the second part of this article, we study the effective support of the rate function $I$ given by the previous theorem. By convexity of $I$, the effective support $D_{I}$ is clearly a convex subset of $\mathfrak{a}$. Our second main result gives more information on this set. One important feature is that when the support $S$ of the probability measure $\mu$ is a bounded subset of $G$, we show that the effective support of $I$ is identified with a set of deterministic construction depending only on $S$, namely the joint spectrum $J(S)$ of $S$, which we now describe: let $G$ be a connected semisimple linear Lie group as before. Denote by $\lambda: G \to \mathfrak{a}^{+}$ the Jordan projection of $G$: for an element $g \in G$, if $g=g_{e}g_{h}g_{u}$ is the Jordan decomposition of $g$ with $g_{e}$ elliptic, $g_{h}$ hyperbolic and $g_{u}$ unipotent, then $\lambda(g)$ is defined as $\kappa(g_{h})$. Now let $S$ be a bounded subset of $G$ and suppose that $S$ generates a Zariski dense semigroup in $G$. In \cite{Breuillard.Sert.joint.spectrum}, it is shown that both of the sequences $\frac{1}{n}\kappa(S^{n})$ and $\frac{1}{n}\lambda(S^{n})$ of subsets of $\mathfrak{a}^{+}$ converge in the Hausdorff topology to a convex body (i.e. compact, convex subset with non-empty interior) in $\mathfrak{a}^{+}$. This limit set is called the joint spectrum of $S$ (see \cite{Breuillard.Sert.joint.spectrum}). In these terms our second result reads

\begin{theorem}\label{intro.kanitsav3}
Let $G$ be a connected semisimple linear Lie group and let $\mu$ be a probability measure on $G$. Denote by $S$ the support of $\mu$ and suppose that the semigroup generated by $S$ is Zariski dense in $G$. Let $I$ be the rate function given by Theorem \ref{kanitsav1}. Then,\\[3pt]
1. The effective support $D_{I}=\{x \in \mathfrak{a}\, | \, I(x)<\infty\}$ of $I$ is a convex set with non-empty interior. Moreover, if $\mu$ has a finite second order moment, we have $\vec{\lambda}_{\mu} \in \inte(D_{I})$.\\[3pt]
2. If $S$ is a bounded subset of $G$, then $\overline{D}_{I}=J(S)$ and $\inte(D_{I})=\inte(J(S))$.\\[3pt]
3. If $S$ is a finite subset of $G$, then $D_{I}=J(S)$.
\end{theorem}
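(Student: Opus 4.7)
The plan rests on two main inputs: the LDP of Theorem \ref{intro.kanitsav1} (supplying convexity and lower semicontinuity of $I$, together with two-sided probability bounds), and the joint spectrum theorem of \cite{Breuillard.Sert.joint.spectrum} (Hausdorff convergence of $\tfrac{1}{n}\kappa(S^n)$ and $\tfrac{1}{n}\lambda(S^n)$ to the convex body $J(S)$). The basic tool throughout is: if for some $C<\infty$ one has $\mathbb{P}(\tfrac{1}{n}\kappa(Y_n)\in B(x,\epsilon))\geq e^{-Cn}$ for small $\epsilon$ along a subsequence of $n$, then the LDP upper bound on $\overline{B(x,\epsilon)}$ gives $\inf_{y\in \overline{B(x,\epsilon)}} I(y)\leq C$, and lower semicontinuity of $I$ together with $\epsilon\to 0$ yields $I(x)\leq C<\infty$.

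For (1), convexity of $D_I$ is inherited from $I$. For non-empty interior, I would use the Zariski density assumption to find elements $g_1,\ldots,g_r\in \bigcup_n S^n$ (with $r=\dim\mathfrak{a}$) whose Jordan projections $\lambda(g_i)$ are affinely independent in $\mathfrak{a}$, a standard consequence of the existence of sufficiently generic loxodromic elements in a Zariski-dense semigroup. For each $i$, with $g_i\in S^{n_i}$, the relation $\kappa(g_i^k)/k\to \lambda(g_i)$ and a block construction (forcing each of $k$ consecutive length-$n_i$ blocks of the walk to lie in a small neighborhood $V_i$ of $g_i$ of $\mu^{*n_i}$-mass $q_i>0$) yield $\mathbb{P}(\tfrac{1}{kn_i}\kappa(Y_{kn_i})\in B(\lambda(g_i)/n_i,\epsilon))\geq q_i^k$, i.e.\ the $e^{-Cn}$-type bound. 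So by the basic tool $\lambda(g_i)/n_i\in D_I$, and convexity of $I$ places the $(r-1)$-simplex they span inside $D_I$, giving $\inte(D_I)\neq\emptyset$. For the second-moment strengthening $\vec{\lambda}_\mu\in \inte(D_I)$, I would invoke the Benoist-Quint non-degenerate CLT: non-degeneracy of the limiting covariance in all directions, combined with a quadratic-versus-exponential comparison between fluctuations of order $\sqrt{n}$ and the LDP rate, forces $I$ to be finite on a full-dimensional neighborhood of $\vec{\lambda}_\mu$.

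For (2), I prove two inclusions. First, $\overline{D}_I\subset J(S)$: if $x\notin J(S)$, Hausdorff convergence gives $\epsilon>0$ with $B(x,\epsilon)\cap \tfrac{1}{n}\kappa(S^n)=\emptyset$ for all $n$ large; since $\tfrac{1}{n}\kappa(Y_n)$ is almost surely valued in $\tfrac{1}{n}\kappa(S^n)$, the probability vanishes eventually, so the LDP lower bound on $B(x,\epsilon)$ forces $I(x)=\infty$. Hence $D_I\subset J(S)$, whence $\overline{D}_I\subset J(S)$ and $\inte(D_I)\subset \inte(J(S))$. For the reverse, I show $\inte(J(S))\subset D_I$; density of $\inte(J(S))$ in the convex body $J(S)$ then yields $J(S)\subset \overline{D}_I$. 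Given $x\in\inte(J(S))$, the Jordan-projection half of \cite{Breuillard.Sert.joint.spectrum} produces $g=s_1\cdots s_n\in S^n$ with $\lambda(g)/n$ close to $x$. Open neighborhoods $U_i\subset S$ of the $s_i$ (each of positive $\mu$-mass since $s_i\in \Supp\mu$), combined with continuity of the product map and of $\kappa$, make the event ``block $X_{jn}\ldots X_{(j-1)n+1}\in U_1\cdots U_n$ for $j=1,\ldots,k$'' force $Y_{kn}$ to be close to $g^k$, hence $\tfrac{1}{kn}\kappa(Y_{kn})$ close to $\lambda(g)/n$ (via $\kappa(g^k)/k\to \lambda(g)$), and so close to $x$. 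This gives $\mathbb{P}(\mathrm{event})\geq (\prod_i \mu(U_i))^k$, which is an $e^{-C(kn)}$ lower bound with $C=-\tfrac{1}{n}\log\prod_i\mu(U_i)<\infty$; the basic tool then places $x\in D_I$, and the two equalities of the statement follow.

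For (3), finiteness of $S$ trivializes the abundance step. With $p:=\min_{s\in S}\mu(s)>0$, for any $x\in J(S)$ and $\epsilon>0$ pick $g\in S^n$ with $|\kappa(g)/n-x|<\epsilon$ (by Hausdorff convergence); then $\mathbb{P}(Y_n=g)\geq p^n$, so the basic tool gives $I(x)\leq -\log p<\infty$. Combined with $D_I\subset J(S)$ from (2), this yields $D_I=J(S)$. The principal obstacle across the whole proof is the uniform exponential abundance in (2) for continuous $\mu$: producing, for every $x\in\inte(J(S))$, elements $g\in S^n$ with $\lambda(g)/n$ close to $x$ \emph{and} controlling $\mu^{*n}$ of a suitable neighborhood by a quantity at most exponentially small in $n$, which here is achieved by combining the Jordan-projection side of the joint spectrum theorem with the block/concatenation trick. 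Everything else is careful bookkeeping with convexity, LDP inequalities, lower semicontinuity, and the convex-body structure of $J(S)$.
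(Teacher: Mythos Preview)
Your overall strategy is close to the paper's, and parts of it (the inclusion $D_I\subset J(S)$, and the finite-$S$ case) are fine. But the heart of part~2, the reverse inclusion $\inte(J(S))\subset D_I$, has a real gap in the concatenation step, and the same gap infects your argument for non-empty interior in part~1.

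The problem is this. You fix $g=s_1\cdots s_n\in S^n$ with $\lambda(g)/n$ near $x$, take small neighborhoods $U_i$ of the $s_i$, set $W=U_1\cdots U_n$, and then assert that the event ``each length-$n$ block lies in $W$'' forces $Y_{kn}$ to be ``close to $g^k$'', hence $\tfrac{1}{kn}\kappa(Y_{kn})$ close to $\lambda(g)/n$. Continuity of the product map and of $\kappa$ only gives this for \emph{fixed} $k$; as $k\to\infty$, the set $W^k$ is not a small neighborhood of $g^k$ in any useful sense, and there is no a priori reason why $\tfrac{1}{k}\kappa(W^k)$ should stay near $\lambda(g)$. (Concretely, errors of size $\delta$ in each factor can compound so that $\|Y_{kn}-g^k\|$ is comparable to $\|g^k\|$ itself.) If instead you let the $U_i$ shrink with $k$, then $\prod_i\mu(U_i)$ is no longer bounded below by a fixed $e^{-Cn}$, and the exponential lower bound collapses.

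What the paper does at exactly this point is to invoke the $(r,\epsilon)$-Schottky machinery. Starting from an element $g_{n_0}$ with $\tfrac{1}{n_0}\kappa(g_{n_0})$ near $x$, it uses Abels--Margulis--Soifer (Theorem~\ref{AMS}) to right-multiply by a bounded word $f_0$ and obtain an $(\theta_\Gamma,r,\epsilon)$-proximal element; a small neighborhood of this element (Remark~\ref{AMS.remark}) still consists of $(r,\epsilon)$-proximal elements, and after Corollary~\ref{ppdaraltma} one lands in a genuine Schottky family $E$. Now Proposition~\ref{SchottkyCartaniterate} gives the uniform control $\kappa(E^k)\subset k\cdot(\mathrm{co}(\kappa(E))+K)$ for \emph{all} $k$, which is precisely the ``$\tfrac{1}{k}\kappa(W^k)$ stays near $\lambda(g)$'' statement you are missing. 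Your approach could be repaired along these lines: since $x\in\inte(J(S))\subset\mathfrak{a}^{++}$, the element $g$ you produce has $\lambda(g)\in\mathfrak{a}^{++}$ and is therefore proximal; replacing $g$ by a large power makes it $(r,\epsilon)$-proximal, and then a small neighborhood of it is itself a Schottky family. But none of this is in your write-up, and without it the ``block/concatenation trick'' does not deliver the exponential lower bound you claim.

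A smaller point: your sketch for $\vec{\lambda}_\mu\in\inte(D_I)$ via the CLT is too vague. Non-degeneracy of the limiting Gaussian only tells you about deviations of order $n^{-1/2}$ from $\vec{\lambda}_\mu$, which all collapse to $\vec{\lambda}_\mu$ itself; it does not by itself place any fixed open ball around $\vec{\lambda}_\mu$ inside $D_I$. The paper again feeds the CLT output through AMS and the Schottky estimates to propagate the positive-probability deviations to fixed macroscopic scales.
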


\begin{remark}
1. Since $D_{I}$ has non-empty interior and $I$ is convex, it follows that $I$ is locally Lipschitz (in particular continuous) on the interior of $D_{I}$. \\[3pt]
2. Convexity of $I$ and the identification in 2. of the previous theorem allows us to show the existence of certain limits in large deviation probabilities (see Corollary \ref{limit.in.LDP.corollary}) for sufficiently regular sets $R \subseteq \mathfrak{a}$.\\[3pt]
3. In Section \ref{section5}, we present an explicit example of a probability measure $\mu$ of bounded support $S$ such that $D_{I}\neq J(S)$.
\end{remark}

Let $B$ be a bounded subset of the matrix algebra $\Mat(d,\mathbb{R})$ endowed with an operator norm $||.||$. Recall  from \cite{Rota.Strang} that (the logarithm of) the joint spectral radius $r(B)$ of $B$ is the quantity $\lim_{n \to \infty} \sup_{x \in B^{n}}\frac{1}{n}\log ||x||$. This limit exists by subadditivity and does not depend on the norm $||.||$. This generalizes the usual notion of spectral radius. Recall furthermore that the joint spectral subradius $r_{sub}(B)$ of $B$ is the quantity similarly defined by replacing $\sup$ by $\inf$ in the definition of $r(B)$. From the previous theorem and Corollary \ref{intro.corol1}, we deduce

\begin{corollary}
Let $\mu$ be a probability measure on $\GL(V)$ such that the group generated by its support is completely reducible and let $I$ be the rate function given by Theorem \ref{kanitsav1} (as in Corollary \ref{intro.corol1}). Then,\\[3pt]
1. $D_{I} \subseteq \mathbb{R}$ is an interval with non-empty interior. Moreover, if $\mu$ has a finite second order moment, then $\lambda_{1} \in \inte(D_{I})$, where $\lambda_{1}$ is the first Lyapunov exponent of $\mu$.\\[3pt]
2. If the support $S$ of $\mu$ is a bounded subset of $\GL(V)$, then $\overline{D}_{I}=[r_{sub}(S),r(S)]$ and $\inte(D_{I})=(r_{sub}(S),r(S))$.\\[3pt]
3. If $S$ is a finite subset of $G$, then $D_{I}=[r_{sub}(S),r(S)]$.
\end{corollary}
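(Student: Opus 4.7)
The plan is to deduce this corollary from Theorem \ref{intro.kanitsav3} via the contraction principle, which is the same mechanism that produces Corollary \ref{intro.corol1} from Theorem \ref{intro.kanitsav1}. Write $V=\bigoplus_{i} V_{i}$ for a decomposition of $V$ into irreducible $\Gamma$-subspaces (with $\Gamma$ the group generated by $S$), so that $\log\|g\|=\max_{i}\log\|g|_{V_{i}}\|$. Each component induces a highest-weight linear functional $\alpha_{i}:\mathfrak{a}\to\R$ on the Cartan subalgebra of the (reductive) Zariski closure of $\Gamma$ with $\alpha_{i}(\kappa(g))=\log\|g|_{V_{i}}\|$, and I set $\alpha:=\max_{i}\alpha_{i}$, a continuous convex function satisfying $\alpha\circ\kappa=\log\|\cdot\|$. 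Letting $I_{\kappa}$ denote the rate function for $\frac{1}{n}\kappa(Y_{n})$ from Theorem \ref{kanitsav1}, the contraction principle yields $I(t)=\inf\{I_{\kappa}(x):\alpha(x)=t\}$, and in particular $D_{I}=\alpha(D_{I_{\kappa}})$. All three parts then reduce to transporting properties of $D_{I_{\kappa}}\subset\mathfrak{a}$ through $\alpha$.

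For part 1, convexity of $I$ (Corollary \ref{intro.corol1}) forces $D_{I}\subseteq\R$ to be an interval. Theorem \ref{intro.kanitsav3}(1) supplies $\inte D_{I_{\kappa}}\neq\emptyset$, and the Zariski density hypothesis guarantees at least one $\alpha_{i}$ is non-zero, so the continuous function $\alpha$ is non-constant on this open set, giving $\inte D_{I}\neq\emptyset$. Under the second-moment hypothesis, one has $\vec{\lambda}_{\mu}\in\inte D_{I_{\kappa}}$; in the $\Gamma$-irreducible case $\alpha$ is linear and $\lambda_{1}=\alpha(\vec{\lambda}_{\mu})\in\inte D_{I}$ is immediate from openness of surjective linear maps. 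In the reducible case I would handle the two endpoints separately: for the upper endpoint use the convex-analytic fact that a convex function cannot attain its supremum at an interior point of its convex domain without being constant there, together with the non-constancy of $\alpha$ noted above; for the lower endpoint apply the Furstenberg–Kesten law componentwise, so that $\lambda_{1}$ already strictly exceeds the generic minimum of $\alpha$ on $D_{I_{\kappa}}$.

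For parts 2 and 3, when $S$ is bounded, Theorem \ref{intro.kanitsav3}(2) gives $\overline{D}_{I_{\kappa}}=J(S)$ and $\inte D_{I_{\kappa}}=\inte J(S)$, so by continuity of $\alpha$ and compactness of $J(S)$ one has $\overline{D}_{I}=\alpha(J(S))$. The identification $\alpha(J(S))=[r_{sub}(S),r(S)]$ follows from the Hausdorff convergence $\frac{1}{n}\kappa(S^{n})\to J(S)$ established in \cite{Breuillard.Sert.joint.spectrum}: applying the continuous $\alpha$ yields Hausdorff convergence of $\{\frac{1}{n}\log\|g\|:g\in S^{n}\}$ to $\alpha(J(S))$, so its extremes converge to $r(S)$ and $r_{sub}(S)$ by definition; since $\alpha(J(S))$ is a connected compact subset of $\R$ containing both extremes, it must equal $[r_{sub}(S),r(S)]$. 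Passing to interiors gives $\inte D_{I}=(r_{sub}(S),r(S))$. Part 3 is analogous, using the full equality $D_{I_{\kappa}}=J(S)$ from Theorem \ref{intro.kanitsav3}(3). The main obstacle I anticipate is the bookkeeping required by the non-linearity of $\alpha$ in the reducible setting: several statements that are immediate from linear algebra in the irreducible case need short convex-analysis arguments here, and one must check that the definition of $r_{sub}(S)$ in terms of the operator norm on $V$ (itself a max over components) correctly matches $\min\alpha(J(S))$ rather than $\min_{i}\min\alpha_{i}(J(S))$.
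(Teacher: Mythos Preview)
Your approach is essentially the paper's intended one: the corollary is stated in the introduction without a written proof, preceded only by the sentence ``From the previous theorem and Corollary~\ref{intro.corol1}, we deduce''. You have correctly identified the mechanism---the contraction principle through the continuous map $\alpha=\max_i\alpha_i$ with $\alpha_i$ the highest-weight functionals of the irreducible summands---and your identification $\alpha(J(S))=[r_{sub}(S),r(S)]$ via the Hausdorff convergence $\frac{1}{n}\kappa(S^n)\to J(S)$ is exactly right.

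One point deserves more care. In part~1, your argument that $\lambda_1$ is strictly above the lower endpoint of $D_I$ in the reducible case is not convincing as stated. The issue is that $\alpha=\max_i\alpha_i$ is convex rather than linear, and a convex function can attain its minimum at an interior point of a convex domain: for instance if two of the $\alpha_i$ are opposite linear forms, $\alpha$ behaves like an absolute value and its minimum sits in the interior. So the bare fact $\vec\lambda_\mu\in\inte(D_{I_\kappa})$ does not immediately give $\alpha(\vec\lambda_\mu)>\min_{D_{I_\kappa}}\alpha$, and invoking ``Furstenberg--Kesten componentwise'' does not by itself rule this out. A cleaner route is to note that for the index $i_0$ realising $\lambda_1=\alpha_{i_0}(\vec\lambda_\mu)$, the linear form $\alpha_{i_0}$ is non-constant on $\inte(D_{I_\kappa})$, so there exists $y\in D_{I_\kappa}$ with $\alpha_{i_0}(y)<\alpha_{i_0}(\vec\lambda_\mu)$; one then still has to check that the \emph{other} $\alpha_j$ do not all rise to $\lambda_1$ along the segment toward $y$, which requires a short separate argument (or, alternatively, restrict to a single irreducible component and invoke Theorem~\ref{kanitsav3}.1 directly for that component). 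The paper does not spell this out either, so this is a refinement rather than a divergence from its approach.
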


Finally, the following question remains unsettled:
\begin{question} Is the rate function $I$ given by Theorem \ref{intro.kanitsav1} strictly convex?
\end{question}
Some partial results have recently been obtained by Guivarc'h-Le Page \cite{Guivarch.LePage} using an analytic approach. We also note that a positive answer to this question would be considerably stronger than the exponential decay result of Le Page (Theorem \ref{LePage}) which itself may be considered to indicate that $I$ is strictly convex at least around the Lyapunov vector $\vec{\lambda}_{\mu}$.

\subsection{Overview of the argument}
We now briefly sketch the proof of the existence of an LDP as claimed in Theorem \ref{intro.kanitsav1}. A key tool here will be the notion of an $(r,\epsilon)$-Schottky semigroup. For simplicity, we shall assume that the measure $\mu$ is compactly 
supported. The general fact that we use to show the existence of LDP is Theorem \ref{existLDP}: we have to show that the equality $I_{li}=I_{ls}$ in that theorem is satisfied. 

To fix ideas, let us speculate that $\kappa$ was an additive mapping (i.e. $\kappa(gh)=\kappa(g)+\kappa(h)$). Then the equality $I_{li}=I_{ls}$ would follow rather easily from the independence of random walk increments and uniform continuity of $\kappa$. Of course, $\kappa$ is not additive, but in fact a weaker form of additivity (i.e. $||\kappa(gh)-\kappa(g)-\kappa(h)||$ is uniformly bounded for all $g,h \in \text{supp}(\mu)$) is sufficient to insure the desired equality. A key result of Benoist (see Theorem \ref{Best} and Proposition \ref{SchottkyCartaniterate}) shows that this weak form of additivity is satisfied in any given $(r,\epsilon)$-Schottky semigroup (\cite{Benoist2}). This already finishes the proof in the case when $\mu$ is supported on such a semigroup. For the general case, we need an argument showing that we can restrict the random walk on Schottky semigroups with no loss in the exponential rate of probabilities involved. This is 
done by using, first a result of Abels-Margulis-Soifer \cite{AMS} about the ubiquity of proximal elements in Zariski dense semigroups (which in 
turn uses a result of Benoist-Labourie \cite{Benoist.Labourie} and Prasad \cite{Prasad}) together 
with the uniform continuity of the Cartan projection, and second,
a simple partitioning and pigeonhole argument. 

Abels-Margulis-Soifer show that for a Zariski 
dense semigroup $\Gamma$ in $G$, there exists $r>0$ such that for every $\epsilon>0$, one can find a \textit{finite} subset $F \subset \Gamma$ with the property that for all $\gamma \in \Gamma$, there exists $f \in F$ such that $\gamma.f$ is $(r,\epsilon)$-proximal (see Section \ref{section3}). This allows one to see that (Lemma \ref{AMS.dispersion}) if the Cartan projection of the random walk hits a region of $\mathfrak{a}^{+}$ at some step with some probability, after a uniformly bounded number of steps, it will hit $(r,\epsilon)$-proximal elements, whose Cartan projection belong to a neighborhood of that region, and this with almost the same exponential rate of probability.

The next step in the proof consists in observing that one can further restrict the random 
walk to a $(r,\epsilon)$-Schottky semigroup, again keeping almost the same exponential rate of probability (Corollary \ref{ppdaraltma}). By doing so, we reduce the situation 
to a random walk on a semigroup on which the Cartan 
projection $\kappa(.)$ is almost additive and hence we 
can conclude as we mentioned in the beginning of the argument.

\subsection{Organization of the article}
In Section \ref{section2}, we review some basic properties of reductive groups over local fields and we note some variants of classical results on $(r,\epsilon)$-Schottky semigroups. These results will be essential in our later arguments on large deviations. In Section \ref{section3} we give two precise versions of Theorem \ref{intro.kanitsav1} and prove the existence of the LDP. Section \ref{section4} is devoted to the proof of the convexity of the rate function and other assertions of Theorem \ref{kanitsav2}. In Section \ref{section5}, we give the precise version of Theorem \ref{intro.kanitsav3} and prove it. Finally, in Section \ref{section6} we collect some results on large deviations for Jordan projections, make a conjecture and present some examples.

\subsection*{Acknowledgements}
These results are part of author's doctoral thesis realized under the supervision of Emmanuel Breuillard in Universit\'{e} Paris-Sud during 2013-2016. The author would like to take the opportunity to thank him for asking the original question and numerous discussions. The author also thanks to WWU M\"{u}nster where part of this work was conducted and acknowledges the supports of DIM RDM-IdF, ERC Grant 617129 and SNF Grant 200021-152819.

\section{Preliminaries from $(r,\epsilon)$-Schottky semigroups}\label{section2}
We start by indicating related definitions and results for linear transformations, we then note some basic properties of linear reductive groups over local fields and finally give relevant definitions and some variants of results on $(r,\epsilon)$-Schottky semigroups. We also provide an example to illustrate some of the notions for the reader only interested in matrices for the case of $G=\SL(d,\mathbb{R})$.\\[-22pt]

\bigskip

Let $\mathrm{k}$ be a local field (locally compact topological field with respect to a non-discrete topology), i.e. $\rm k=\mathbb{R} $ or $\mathbb{C}$ (Archimedean, characteristic zero case) or a finite extension of $\mathbb{Q}_{p}$ (non-Archimedean, characteristic zero case) or a finite extension of $\mathbb{F}_{p}((T))$ (non-Archimedean, positive characteristic case). When $\rm k$ is Archimedean, we denote by $|.|$ the usual absolute value on $\rm k$. When $\rm k$ is non-Archimedean, we denote $\mathcal{O}$ the ring of integers of $\rm k$, $\mathfrak{m}$ the maximal ideal of $\mathcal{O}$, $q$ the cardinality of the residue field and $\varpi$ a uniformizer of $\rm k$, i.e. a generator of $\mathfrak{m}$. We denote by $\nu(.)$ the discrete valuation on $\rm k$ such that $\nu(\varpi)=1$ and we endow $\rm k$ with the ultrametric norm $|.|=q^{-\nu(.)}$.

Let $V$ be a finite dimensional $\rm k$-vector space, $X=\mathbb{P}(V)$ its projective space. If $\rm k$ is Archimedean, we endow $V$ with a Euclidean norm $||.||$, and if $\rm k$ is non-Archimedean, we endow $V$ with an ultrametric sup-norm $||.||$ associated to a basis of $V$. We will work with the Fubini-Study metric on $X$: for $x,y \in X$, denoting by $v_{x}$ and $v_{y}$ any two vectors in $V$ projecting respectively on $x$ and $y$, we have $d(x,y):=\frac{||v_{x} \wedge v_{y}||}{||v_{x}||.||v_{y}||}$, where $||.||$ also denotes the associated norm on  $\bigwedge^{2} V$. In the sequel, we will also denote by the same $||.||$, the operator norm on the $\rm k$-linear endomorphisms of $V$, associated to the norm $||.||$ on $V$. Finally, for a metric space $(X,d)$, we denote by $d_{H}$ the corresponding Hausdorff distance on the set of subsets of $X$.
 
\subsection{Proximal transformations}  The notion of proximality of a linear transformation is related to an important contraction property of the dynamics of its projective action. It is, for example, of essential use in the Tits' original proof of the Tits alternative in \cite{Tits} through the so called ping-pong lemma. It is also in close relation to Furstenberg's earlier (quasi-) projective transformations \cite{Furstenberg.boundary.theory}. See Breuillard-Gelander's \cite{Breuillard.Gelander} for a more detailed account and Quint's \cite{Quint.cones} for a generalization.

For $g \in End(V)$, denote by $\lambda_{1}(g)$ the spectral radius of $g$.
An element $g \in End(V) $ is said to be proximal if it has a unique eigenvalue $\alpha$ such that $|\alpha|=\lambda_{1}(g)$, and this eigenvalue is simple (in particular, $\alpha \in \rm k$). Denote by $x_{g}^{+}$, the element of $X$ corresponding to the one dimensional eigenspace corresponding to $\alpha$. Let $v_{g}^{+}$ be a vector of norm 1 on this line, and $V_{g}^{<}$ the supplementary $g$-invariant hyperplane, and put $X_{g}^{<}:=\mathbb{P}(V_{g}^{<}) \subset X $.  

The following definition singles out special proximal elements: let $0 < \epsilon \leq r$ and set $b_{g}^{\epsilon}:=\{x \in X \; | \; d(x,x_{g}^{+}) \leq \epsilon \}$ and $B_{g}^{\epsilon}:=\{x \in X \; | \; d(x, X_{g}^{<}) \geq \epsilon \}$.

\begin{definition}[\cite{AMS},\cite{Benoist1}]
Let $0 < \epsilon \leq r$. An element $ g \in End(V) $ is said to be $(r, \epsilon)$-proximal, if $d(x_{g}^{+},X_{g}^{<})\geq 2r$, $g(B_{g}^{\epsilon}) \subset b_{g}^{\epsilon}$, and $g_{|B_{g}^{\epsilon}}$ is an $\epsilon$-Lipschitz mapping.
\end{definition}

\begin{remark}\label{proximality.definition.remark}
1. The notion of an $(r,\epsilon)$-proximal transformation, as well as the numbers $0 < \epsilon \leq r$ depend on the choice of the norm on $V$.\\[3pt]
2. Nevertheless, it is not hard to see that for every proximal transformation $g$ and for any choice of norm on $V$, there exists $r >0$ such that for all $k \in \mathbb{N}$ large enough, $g^{k}$ is $(r,\epsilon_{k})$-proximal with $\epsilon_{k} \underset{k \rightarrow \infty}{\longrightarrow} 0$. 
\end{remark}

\subsection{Two properties of $(r,\epsilon)$-proximal transformations}  The following lemma says that for $\epsilon >0$ small enough, the spectral radius of an $(r, \epsilon)$-proximal transformation can be controlled by the operator norm of this transformation:

\begin{lemma}\label{proximal.implies1} Let $V$ be a finite dimensional  $\rm k$-vector space and $0 < \epsilon \leq r$. Then, there exist constants $c_{r,\epsilon} \in ]0,1[$ such that, for each $r > 0$, we have $ \underset{\epsilon \rightarrow 0}{\lim} \, c_{r,\epsilon} = 2r$, and for every $(r,\epsilon)$-proximal endomorphism $g$ of $V$, we have 
\begin{equation*}
c_{r,\epsilon}||g|| \leq \lambda_{1}(g) \leq ||g||
\end{equation*}
\end{lemma}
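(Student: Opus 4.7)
The upper bound $\lambda_1(g)\leq\|g\|$ is classical: Gelfand's spectral-radius formula $\lambda_1(g)=\lim_n\|g^n\|^{1/n}$ combined with submultiplicativity gives it without any use of the proximality hypothesis. The content is the lower bound, for which I would work with the $g$-invariant decomposition $V=\mathrm{k}\cdot v_g^{+}\oplus V_g^{<}$ and the associated projections $\pi_{+}:V\to\mathrm{k}\cdot v_g^{+}$ and $\pi_{<}:V\to V_g^{<}$. The geometric input $d(x_g^{+},X_g^{<})\geq 2r$ is to be converted, by a direct computation with the Fubini--Study distance (valid uniformly in the Archimedean and ultrametric cases), into the operator-norm bound $\|\pi_{+}\|\leq 1/(2r)$; morally this is the statement that $\pi_{+}$ picks off the $v_g^{+}$-direction through an angle whose sine is at least $2r$.

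Granted this, the plan has three steps. First, pick a unit vector $v_0$ with $\|gv_0\|\geq(1-\delta)\|g\|$ and, by a perturbation using that $B_g^{\epsilon}$ is the complement of an $\epsilon$-neighbourhood of $X_g^{<}$, arrange $[v_0]\in B_g^{\epsilon}$ with only an $O(\epsilon)$-loss; the $(r,\epsilon)$-proximality then gives $[gv_0]\in b_g^{\epsilon}$, so $d([gv_0],x_g^{+})\leq\epsilon$. Second, because the decomposition is $g$-invariant, one has $gv_0=\lambda\,\pi_{+}(v_0)+g(\pi_{<}(v_0))$ with $|\lambda|=\lambda_1(g)$ and $g(\pi_{<}(v_0))\in V_g^{<}$; the projective estimate $d([gv_0],x_g^{+})\leq\epsilon$ translates into a norm estimate $\|g(\pi_{<}(v_0))\|\leq C(r)\,\epsilon\,\|\lambda\pi_{+}(v_0)\|$ for the transverse component. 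Third, combining the triangle inequality with $\|\pi_{+}(v_0)\|\leq 1/(2r)$ yields
\[
\|gv_0\|\,\leq\,\|\lambda\pi_{+}(v_0)\|\bigl(1+C(r)\epsilon\bigr)\,\leq\,\frac{\lambda_1(g)}{2r}\bigl(1+C(r)\epsilon\bigr),
\]
and letting $\delta\to 0$ while setting $c_{r,\epsilon}:=2r/(1+C(r)\epsilon)$ gives the desired inequality with $c_{r,\epsilon}\in(0,1)$ (using that $2r$ is at most the Fubini--Study diameter, which is at most $1$) and $c_{r,\epsilon}\to 2r$ as $\epsilon\to 0$.

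The main obstacle is the middle step above, namely converting the projective closeness $d([gv_0],x_g^{+})\leq\epsilon$ into the quantitative norm bound on the $V_g^{<}$-component of $gv_0$, with a constant $C(r)$ depending only on $r$ and not on the particular $(r,\epsilon)$-proximal $g$. This requires an explicit Fubini--Study computation in the non-orthogonal decomposition $V=\mathrm{k}\cdot v_g^{+}\oplus V_g^{<}$, and must be carried out uniformly across the Archimedean and the ultrametric base fields, where the wedge-based definition of the Fubini--Study metric recorded in the preliminaries is used with the corresponding norm. Once that quantitative conversion is in place, both the perturbation argument of the first step and the verification that $c_{r,\epsilon}\in(0,1)$ go through routinely, and the claimed asymptotic $c_{r,\epsilon}\to 2r$ drops out of the construction.
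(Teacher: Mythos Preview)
Your approach is correct but takes a genuinely different route from the paper. The paper gives a soft, non-constructive argument via compactness: normalize to $\|g\|=1$, observe that the set of $(r,\epsilon)$-proximal endomorphisms of norm $1$ is compact and that $\lambda_1$ is continuous on it, so $c_{r,\epsilon}:=\inf\{\lambda_1(g):g\text{ is }(r,\epsilon)\text{-proximal},\ \|g\|=1\}$ is attained and positive. For the limit as $\epsilon\to 0$, one takes any convergent sequence $g_k$ of $(r,\epsilon_k)$-proximal transformations of norm $1$ with $\epsilon_k\to 0$ and argues that the limit must be $\alpha p$ with $p$ a rank-one projection satisfying $d(x_p,X_p)\geq 2r$; the identity $\|\alpha p\|=1$ then forces $\alpha\geq 2r$. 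No explicit formula for $c_{r,\epsilon}$ is produced.

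Your argument, by contrast, works through the invariant splitting and the projection bound $\|\pi_+\|\leq 1/(2r)$ to manufacture an explicit $c_{r,\epsilon}=2r/(1+C(r)\epsilon)$. This buys you a concrete constant and avoids any appeal to compactness, at the cost of more bookkeeping. Two remarks on the execution. First, your ``main obstacle'' (Step~2) is in fact a one-line computation: writing $gv_0=\lambda\pi_+(v_0)+g\pi_<(v_0)$ with $g\pi_<(v_0)\in V_g^{<}$, the identity $gv_0\wedge v_g^{+}=g\pi_<(v_0)\wedge v_g^{+}$ together with $d([g\pi_<(v_0)],x_g^{+})\geq 2r$ gives $(2r-\epsilon)\|g\pi_<(v_0)\|\leq \epsilon\,\|\lambda\pi_+(v_0)\|$ directly, uniformly over all fields. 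Second, the step you call routine (the perturbation of Step~1) needs slightly more care than you suggest: the curve $v+s v_g^{+}$ can pass \emph{through} $V_g^{<}$ at $s=-a$ (where $v=av_g^{+}+u$), so one must choose the phase of $s$ to increase $|a+s|$; once this is done the $O(\epsilon)$ loss follows. Alternatively one can sidestep Step~1 entirely by first running your Steps~2--3 on vectors of the form $u+t v_g^{+}$ with $u\in V_g^{<}$ to obtain $\|g|_{V_g^{<}}\|\leq C(r)\epsilon\,\lambda_1(g)$, from which the bound on $\|g\|$ is immediate via $\|gv\|\leq \lambda_1(g)\|\pi_+\|+\|g|_{V_g^{<}}\|\,\|\pi_<\|$.
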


\begin{proof} One notes that if $(g_{k})_{k \in \mathbb{N}}$ is a convergent sequence of $(r, \epsilon_{k})$-proximal transformations such that for all $k \in \mathbb{N}$,  $||g_{k}||=1$ and $\epsilon_{k} \underset{k \rightarrow \infty}{\longrightarrow} 0$, then $\lim_{k \rightarrow \infty}g_{k}=\alpha p$, where $\alpha$ is a positive constant and $p$ is a projection satisfying - denoting by $v_{p}$ a non-zero vector in its image, $x_{p} \in \mathbb{P}(V)$ its projective image,  and by $X_{p} \subset \mathbb{P}(V)$ the projective image of $\ker p$ - $d(x_{p},X_{p}) \geq 2r$ (note also that the definition of an $(r,\epsilon)$-proximal transformation implies that $r\leq \frac{1}{2}$). Since $||\alpha p||=1$, it follows by elementary computations that we have $\alpha \geq 2r$, and the conclusion of lemma results from the compactness of the set of $(r,\epsilon)$-proximal transformations of norm 1 and continuity of the application $\lambda_{1}(.)$.
\end{proof}

The following important proposition is a direct consequence of Lemma 1.4. in Benoist's \cite{Benoist3} (see also Proposition 6.4. in \cite{Benoist1}). It says that one can have a fairly good control over the spectral radii of the products of $(r,\epsilon)$-proximal elements in terms of the spectral radii of the factors, given that the successive factors satisfy a natural geometric condition.

\begin{proposition} \label{spectralcontrolproximal}
For all real numbers $0<\epsilon \leq r$, there exist positive constants $D_{r}$ and $D_{r,\epsilon}>0$ with the property that for each $r > 0 $, we have $\lim_{\epsilon \rightarrow 0}D_{r, \epsilon} =D_{r}$ and such that if $g_{1}, \ldots g_{l}$ are $(r,\epsilon)$-proximal linear transformations of $V$ satisfying (putting $g_{l}=g_{0}$) $d(x_{g_{j-1}}^{+},X_{g_{j}}^{<}) \geq 6r $, for all $j=1, \ldots l$, then for all $n_{1}, \ldots, n_{l} \geq 1$, the linear transformation $g=g_{l}^{n_{l}}\ldots g_{1}^{n_{1}}$
is $(2r,2\epsilon)$-proximal, and
\begin{equation*}
D_{r,\epsilon}^{-l} \leq \frac{\lambda_{1}(g_{l}^{n_{l}}\ldots g_{1}^{n_{1}})}{\lambda_{1}(g_{l})^{n_{l}}\ldots \lambda_{1}(g_{1})^{n_{1}}} \leq D_{r,\epsilon}^{l}
\end{equation*}
\end{proposition}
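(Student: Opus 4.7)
The strategy is to deduce everything from the geometric ping-pong picture encoded in the hypothesis $d(x_{g_{j-1}}^+, X_{g_j}^<)\geq 6r$, combined with Lemma \ref{proximal.implies1} and the cited Lemma 1.4 of \cite{Benoist3}. First I would verify that, for $\epsilon$ small relative to $r$, the attracting point $x_{g_{j-1}}^+$ sits well inside the ping-pong domain $B_{g_j}^{\epsilon}$ of $g_j$ (since $6r\gg\epsilon$), and moreover its image under any $g_j^{n_j}$ lies in the tiny ball $b_{g_j}^\epsilon$ around $x_{g_j}^+$. Chaining these inclusions along the cyclic condition, one sees that the product $g=g_l^{n_l}\cdots g_1^{n_1}$ contracts the set $B_{g_1}^{2\epsilon}$ into $b_{g_l}^{2\epsilon}$, and a Banach-fixed-point argument inside the complete metric space $(b_{g_l}^{2\epsilon},d)$ produces a unique attracting fixed point $x_g^+$ within distance $\epsilon$ of $x_{g_l}^+$; similarly the repelling hyperplane $X_g^<$ lies within distance $\epsilon$ of $X_{g_1}^<$. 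Combined with the distance hypothesis for $j=1$, this yields $d(x_g^+,X_g^<)\geq 4r$ and the required Lipschitz estimate on $B_g^{2\epsilon}$, so that $g$ is $(2r,2\epsilon)$-proximal.

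For the spectral estimate, the upper bound is immediate from submultiplicativity of the norm together with Lemma \ref{proximal.implies1} applied to each $g_j$:
\begin{equation*}
\lambda_1(g)\leq \|g\|\leq \prod_{j=1}^l \|g_j^{n_j}\|\leq \prod_{j=1}^l c_{r,\epsilon}^{-1}\lambda_1(g_j)^{n_j},
\end{equation*}
which gives the right-hand inequality with $D_{r,\epsilon}\geq c_{r,\epsilon}^{-1}$. For the lower bound I would test $g$ against a unit vector $v_1$ representing $x_{g_1}^+$: since $x_{g_1}^+\in B_{g_2}^\epsilon$, the vector $g_2^{n_2}v_1$ is almost aligned with $v_{g_2}^+$ and has norm at least of order $c_{r,\epsilon}\,\lambda_1(g_2)^{n_2}\,\|v_1\|$ (by Lemma \ref{proximal.implies1} together with the fact that the expansion factor of a proximal map on its ping-pong domain is controlled by its spectral radius, up to the distance-to-hyperplane factor which is bounded below by $6r-\epsilon$). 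Iterating this inequality along the cycle yields $\|g\|\geq c_{r,\epsilon}^{\,l}\prod_{j=1}^l \lambda_1(g_j)^{n_j}$, and since $g$ is $(2r,2\epsilon)$-proximal, a final application of Lemma \ref{proximal.implies1} gives $\lambda_1(g)\geq c_{2r,2\epsilon}\|g\|$, producing the lower bound. Choosing $D_{r,\epsilon}$ to be the maximum of $c_{r,\epsilon}^{-1}$ and $c_{2r,2\epsilon}^{-1}$ (times a uniform factor absorbing the initial and terminal alignment losses, which depend only on $r$ and $\epsilon$ and not on $l$ or the $n_j$) yields a constant with $D_{r,\epsilon}\to D_r:=(2r)^{-1}\cdot\text{const}$ as $\epsilon\to 0$, as required.

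The main technical obstacle, already addressed in \cite{Benoist3}, is keeping the geometric losses at each ping-pong step uniform in $l$ and $n_j$: the naive bound accumulates a factor per step, but the exponential contraction of $g_j^{n_j}$ on $B_{g_j}^\epsilon$ (by the $\epsilon$-Lipschitz property with $\epsilon<1$) ensures that after the first application, $g_j^{n_j}v$ is genuinely $\epsilon^{n_j}$-close to $v_{g_j}^+$, so the alignment error does not worsen with $n_j$. The constants $c_{r,\epsilon}$ from Lemma \ref{proximal.implies1} absorb the remaining per-step geometric factors and give the clean geometric bound $D_{r,\epsilon}^{\pm l}$. Finally, I would note that the whole argument is local-field-agnostic: both the Fubini-Study geometry on $\mathbb P(V)$ and the contraction/Banach-fixed-point arguments go through identically in the non-Archimedean case, which is why citing Benoist's lemma suffices.
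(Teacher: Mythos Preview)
The paper does not supply its own proof of this proposition; it simply records it as a direct consequence of Lemma~1.4 in Benoist~\cite{Benoist3} (cf.\ also Proposition~6.4 in~\cite{Benoist1}). Your sketch faithfully reconstructs the standard ping-pong argument behind that lemma --- chaining $b_{g_{j-1}}^{\epsilon}\subset B_{g_j}^{\epsilon}$ via the $6r$ hypothesis, extracting the attracting fixed point of $g$ by contraction, and bounding $\lambda_1$ through Lemma~\ref{proximal.implies1} applied to the $(r,\epsilon)$-proximal factors $g_j^{n_j}$ and to the $(2r,2\epsilon)$-proximal product --- so your approach is exactly the one the paper cites, just spelled out rather than quoted.
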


This proposition partly motivates the following definitions which will be of important use to us in the sequel (see also Definition 1.7 in \cite{Benoist3}):

\begin{definition} \label{defnarepsSchottky1}
1. A subset $E$ of $GL(V)$ is called an $(r,\epsilon)$-Schottky family if \\[3pt]
a. For all $\gamma \in E$, $\gamma$ is $(r,\epsilon)$-proximal, and\\[3pt]
b. $d(x_{\gamma}^{+},X_{\gamma'}^{<}) \geq 6r$, for all $\gamma, \gamma' \in E$.\\[3pt]
2. Let $E \subset GL(V)$ be a subset consisting of proximal elements and $a \geq 0$ be a real number. We say that the set $E$ is $a$-narrow in $\mathbb{P}(V)$, if there exists a subset $Y$ of $\mathbb{P}(V)$ of diameter less than $a$ such that for each $\gamma \in E$, we have $x_{\gamma}^{+} \in Y$, and for every $\gamma ,\gamma' \in E$, we have $d_{H}(X^{<}_{\gamma},X^{<}_{\gamma'})<a$.
\end{definition}

\begin{remark}\label{defnarepsSchottky1.remark}
Note that, by definition, a Schottky family (i.e. $(r,\epsilon)$-Schottky family, for some $r \geq \epsilon >0$) cannot contain an element $g \in GL(V)$ and its inverse $g^{-1}$ at the same time.
\end{remark} 

The notion of proximality is related to only one special direction of the action of a linear transformation. We would like to have an equivalent property for the other/all eigenvalues and eigendirections. This property is reflected in the notion of a $\theta$-proximal element, which we shall shortly define.

\subsection{Connected reductive groups} Let $\rm k$ be a local field, $\mathbf{G}$ a connected reductive algebraic group defined over $\rm k$. Set $G=\mathbf{G}(\rm k)$ and equip $G$ with its natural locally compact topology. 

Fix a maximal $\rm k$-split torus $\mathbf{A}$ of $\mathbf{G}$. Let $\mathbf{Z}$ be the centralizer of $\mathbf{A}$ in $\mathbf{G}$ and $\mathbf{S}$ be the derived $\rm k$-subgroup of $\mathbf{G}$. Denote by $d$ the $\rm k$-rank of $\mathbf{G}$ and by $d_{S}$ that of $\mathbf{S}$. Let $Z, A, S, G$ be the groups of $k$-points of $\mathbf{Z}, \mathbf{A}, \mathbf{S}, \mathbf{G}$, respectively.

Let $X(\mathbf{A})$ denote the set of rational characters of $\mathbf{A}$ (it is a free $\mathbb{Z}$-module of rank $d$), set $\mathfrak{a}^{\ast}=X(\mathbf{A}) \underset{\mathbb{Z}}{\otimes} \mathbb{R}$, and let $\mathfrak{a}$ denote the dual $\mathbb{R}$-vector space of $\mathfrak{a}^{\ast}$. There exists a unique morphism, that we denote by $\log$, $\log: Z \to \mathfrak{a}$ extending the natural morphism from $A \to \mathfrak{a}$ (see \cite{BQpoly} 7.1.). For any $\chi \in X(A)$, denote by $\overline{\chi}$, the unique element of $\mathfrak{a}^{\ast}$ such that $|\chi(.)|=\exp(\overline{\chi}(\log(.)))$. In case $\rm k= \mathbb{R}$, $\mathfrak{a}$ is the Lie algebra of $A$, $\log$ is the usual logarithm mapping (inverse of the exponential map on $\mathfrak{a}$), and $\overline{\chi}$ is the differential of $\chi \in X(A)$.
\subsubsection{Roots, Weyl chambers}
Let $\Sigma$ be a root system of the pair $(\mathbf{G},\mathbf{A})$, i.e. it is the set of non-trivial weights of the adjoint representation of $A$ in the Lie algebra of $G$. Choose a set of positive roots $\Sigma^{+}$ in $\Sigma$, and let $\Pi=\{\alpha_{1},\ldots,\alpha_{d_{S}}\}$ be the simple roots in $\Sigma^{+}$. The set $\overline{\Sigma}=\{\overline{\alpha} \in \mathfrak{a}^{\ast} \; | \; \alpha \in \Sigma \}$ is a root system in $\mathfrak{a}^{\ast}$ and $\overline{\Pi}=\{\overline{\alpha} \; | \; \alpha \in \Pi\}$ is a basis of this root system. Let $W$ denote the Weyl group of this root system, put $\mathfrak{a}^{+}:=\{x \in \mathfrak{a} \;| \; \forall \alpha \in \Sigma^{+} \,, \,  \overline{\alpha}(x) \geq 0 \}$ the closed Weyl chamber of $\mathfrak{a}$ associated to the choice of $\Sigma^{+}$, and set $Z^{+}=\log^{-1}(\mathfrak{a}^{+}) \subset Z$. Similarly, let $\mathfrak{a}^{++}:=\{x \in \mathfrak{a} \;| \; \forall \alpha \in \Sigma^{+} \,, \, \overline{\alpha}(x) > 0 \}$ be the open Weyl chamber associated to $\Pi^{+}$. The choice of $\Sigma^{+}$ also induces a partial order on $X(\mathbf{A})$: for $\chi_{1}, \chi_{2}$ in $X(\mathbf{A})$, $\chi_{1} \geq \chi_{2}$ if and only if $\overline{\chi}_{1}(x) \geq \overline{\chi}_{2}(x)$ for all $x \in \mathfrak{a}^{+}$. 

We denote by $\mathfrak{a}_{C}$ the subspace of $\mathfrak{a}$ consisting of fixed points of the Weyl group $W$, and by $\mathfrak{a}_{S}$, the unique $W$-stable supplementary subspace of $\mathfrak{a}_{C}$. We fix a $W$-invariant scalar product on $\mathfrak{a}$, and denote by $(\overline{\omega}_{1}, \ldots, \overline{\omega}_{d_{S}})$ fundamental weights of $(\overline{\Sigma},\overline{\Pi})$, satisfying $\overline{\omega}_{i \, | \mathfrak{a}_{C}} \equiv 0$ for each $i=1,\ldots,d_{S}$. These are elements of $\mathfrak{a}^{\ast}$ satisfying $\frac{2<\overline{\omega}_{i},\overline{\alpha}_{j}>}{<\overline{\alpha}_{j},\overline{\alpha}_{j}>}=\delta_{ij}$ for all $i,j =1,\ldots, d_{S}$. Finally, fix a subset $X_{C}$ of $X(\mathbf{Z})$ (set of characters of $\mathbf{Z}$), such that $\overline{X}_{C}=\{\overline{\alpha} \; | \; \alpha \in X_{C}\}$ is a basis of $\mathfrak{a}^{\ast}_{C}$ (subspace of $W$-fixed points of $\mathfrak{a}^{\ast}$). 

For a subset $\theta$ of $\overline{\Pi}$, denote by $\theta^{c}$, the set $\overline{\Pi} \setminus \theta$. Put $\mathfrak{a}_{\theta}= \bigcap_{\alpha \in \theta^{c}} \ker \alpha$, $\mathfrak{a}_{\theta}^{+}=\mathfrak{a}_{\theta} \cap \mathfrak{a}^{+}$, and set $\mathfrak{a}_{\theta}^{++}=\mathfrak{a}_{\theta}^{+} \setminus (\bigcup_{\tau \varsubsetneq \theta} \mathfrak{a}_{\tau}^{+})$. The elements of the collection $(\mathfrak{a}_{\theta}^{+})_{\theta \subset \overline{\Pi}}$ are the faces of the convex polytope $\mathfrak{a}^{+}$. One notes that $\mathfrak{a}_{\overline{\Pi}}=\mathfrak{a}$ and $\mathfrak{a}_{\emptyset}$ is the subspace of $\mathfrak{a}$ spanned by $\overline{X}_{C}$.

\subsubsection{Cartan and Jordan projections}
Let $K$ be a maximal compact subgroup of $G$ such that one has the Cartan decomposition $G=KZ^{+}K$. When $\rm k$ is Archimedean, $K$ can be taken as the maximal compact subgroup whose Lie algebra is orthogonal to that of $A$ for the Killing form. For the non-Archimedean case, see \cite{Bruhat.Tits}. In the $KZ^{+}K$ factorization of an element $g \in G$, the middle factor is uniquely defined. This allows us to define the Cartan projection $\kappa:G \to \mathfrak{a}^{+}$ by requiring that for every $g \in G$, $g \in K \log^{-1}(\kappa(g))K$. It is a proper continuous map on $G$.

In case $\rm k=\mathbb{R}$ or $\mathbb{C}$, every element $g \in G$ admits a unique factorization into commuting elements as $g=g_{e}g_{h}g_{u}$, where $g_{e}$ is an elliptic, $g_{h}$ is an hyperbolic and $g_{u}$ is a unipotent element. This is called the Jordan decomposition of $g$. The Jordan projection $\lambda: G \to \mathfrak{a}^{+}$ is defined as $\lambda(g)=\log (z_{g})$, where $z_{g}$ is the unique element of $Z^{+}$ such that $g_{h}$ is conjugated to $z_{g}$. When $\rm k$ is non-Archimedean, such a decomposition still exists, but up to passing to a finite power of $g$, i.e. there exists $n \geq 1$, such that $g^{n}=g_{e}g_{h}g_{u}$, where $g_{h}$ is semisimple with eigenvalues in $\varpi^{\mathbb{Z}}$ ($\varpi$ is the uniformizer of $\rm k$). The element $g_{h}$ is conjugated to a unique element $z_{g}$ of $Z_{g}$, and we set $\lambda(g)=\frac{1}{n}\log (z_{g})$. This does not depend on $n$.

\subsubsection{Representations}
Let $(V,\rho)$ be a $\rm k$-rational representation of $G$. The weights of $(V,\rho)$ are the characters $\chi \in X(\mathbf{A})$ such that the associated weight space $V_{\chi}=\{v\in V \; | \; \forall a \in A, \rho(a)v=\chi(a)v\}$ is non-trivial. If $(V,\rho)$ is an irreducible $\rm k$-rational representation, then the set of weights of $(V,\rho)$ admits a maximal element $\chi_{\rho}$ (for the partial order on $X(\mathbf{A})$ induced by $\mathfrak{a}^{+}$), called the highest weight of $(V,\rho)$. The irreducible representation $(V,\rho)$ is said to be proximal, if $\dim(V_{\chi_{\rho}})=1$.

For the remaining part of this article, we \textbf{fix} the family of representations given by the next lemma. We shall refer to them as distinguished representations.
\begin{lemma}(Tits \cite{Titsreplin}) \label{rrep}
Let $G$ be as before. For each $i=1,\ldots,d_{S}$, there 
exists a proximal irreducible $\rm k$-rational representation $(V_{i},\rho_{i})$ with highest weight $\chi_{i}$ such that $\overline{\chi}_{i}$ is a multiple of the fundamental weight $\overline{\omega}_{i}$.
\end{lemma}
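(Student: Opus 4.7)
The plan is to reduce to a classical construction due to Tits \cite{Titsreplin}, which builds the required representations by descending from $\bar{\rm k}$ to $\rm k$. I would pass first to a maximal $\bar{\rm k}$-torus $\mathbf{T}$ of $\mathbf{G}$ containing $\mathbf{A}$, fix a set of absolute simple roots $\Delta$ compatible with $\Sigma^+$ (so that the restriction map $X(\mathbf{T}) \to X(\mathbf{A})$ sends $\Sigma^+$-positive roots to $\overline{\Sigma}^+$-positive roots), and denote by $\{\omega_\alpha\}_{\alpha \in \Delta}$ the associated absolute fundamental weights. The absolute Galois group $\Gamma = \mathrm{Gal}(\bar{\rm k}/\rm k)$ acts on $X(\mathbf{T})$ through the $\ast$-action of Tits, preserves $\Delta$, and the restriction map induces a bijection between the $\Gamma$-orbits on $\Delta$ and the relative simple roots $\overline{\Pi}$.

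With this dictionary in hand, for each $i = 1,\ldots, d_S$ I would let $O_i \subset \Delta$ be the $\Gamma$-orbit matched with $\overline{\alpha}_i$, and set $\chi_i^{(0)} := \sum_{\alpha \in O_i} \omega_\alpha$. By construction, $\chi_i^{(0)}$ is $\Gamma$-invariant and dominant, so its restriction defines a character of $\mathbf{A}$. A direct computation using the defining duality $\langle \omega_\alpha, \alpha'^\vee \rangle = \delta_{\alpha,\alpha'}$ shows that $\overline{\chi_i^{(0)}}$ annihilates every simple restricted coroot $\overline{\alpha}_j^{\vee}$ for $j \neq i$ and is strictly positive on $\overline{\alpha}_i^{\vee}$; equivalently, $\overline{\chi_i^{(0)}}$ is a positive rational multiple of $\overline{\omega}_i$.

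Next, I would invoke Tits's classification of $\rm k$-rational irreducible representations from \cite{Titsreplin}: every $\Gamma$-invariant dominant weight has a positive integer multiple which is realized as the $\bar{\rm k}$-highest weight of some absolutely irreducible $\rm k$-rational representation of $\mathbf{G}$. Replacing $\chi_i^{(0)}$ by a suitable multiple $\chi_i := N_i \chi_i^{(0)}$ if necessary, this yields the required representation $(V_i, \rho_i)$, with $\overline{\chi}_i = N_i \overline{\chi_i^{(0)}}$ still a positive multiple of $\overline{\omega}_i$. Proximality is automatic: the $\chi_i$-weight space of $V_i \otimes_{\rm k} \bar{\rm k}$ is one-dimensional because $\chi_i$ is the highest weight of an \emph{absolutely} irreducible representation, and this $\bar{\rm k}$-line descends to a $\rm k$-line on which $\mathbf{A}$ acts through $\chi_i$.

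The main obstacle is the descent step: over $\bar{\rm k}$ one easily exhibits the irreducible representation with highest weight $\chi_i^{(0)}$ as a subrepresentation of $\bigotimes_{\alpha \in O_i} V_{\omega_\alpha}$, but this representation need not descend to $\rm k$ --- the obstruction is a class in $\mathrm{Br}(\rm k)$ (the Tits algebra attached to the weight), and in general it is non-trivial. The key point of \cite{Titsreplin} is that this class is torsion in the Brauer group and hence is killed by passing to a sufficiently divisible multiple of the weight, which is precisely why the statement of the lemma only asserts that $\overline{\chi}_i$ is a multiple of $\overline{\omega}_i$ rather than equal to it.
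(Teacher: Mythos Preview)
The paper does not give its own proof of this lemma; it is simply stated with attribution to Tits \cite{Titsreplin} and used as a black box thereafter. Your outline is the standard argument behind Tits's construction and is essentially correct.

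One point deserves a sentence more of care: in your proximality paragraph you identify the one-dimensional $\mathbf{T}$-highest weight line with the $\mathbf{A}$-highest weight space, but a priori the latter is the sum of all $\mathbf{T}$-weight spaces whose weights restrict to $\chi_i$ on $\mathbf{A}$. What makes this work is precisely your choice of absolute highest weight: since $\chi_i = N_i\sum_{\alpha\in O_i}\omega_\alpha$ involves only fundamental weights attached to simple roots that do \emph{not} restrict trivially on $\mathbf{A}$, one has $\langle \chi_i,\beta^\vee\rangle = 0$ for every $\beta\in\Delta$ with $\beta|_{\mathbf{A}}=0$, so no weight of the form $\chi_i - \sum n_\beta\beta$ with the $\beta$ in the anisotropic kernel can occur. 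Hence the $\mathbf{A}$-highest weight space really is the $\mathbf{T}$-highest weight line, and proximality follows. With that clarification your sketch is complete.
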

We note that for $i=1,\ldots,d_{S}$, all the other weights of $(V_{i},\rho_{i})$ consist of $(\overline{\chi}_{i} -\overline{\alpha}_{i})$'s and others of the form $\overline{\chi}_{i} -\overline{\alpha}_{i}- \sum_{\beta \in \overline{\Pi}}n_{\beta}\overline{\beta}$ where $n_{\beta} \in \mathbb{N}$. As a consequence, for all $g \in G$ and $i=1,\ldots,d_{S}$, $\rho_{i}(g)$ is a proximal linear transformation of $V_{i}$ if and only if $\overline{\alpha}_{i}(\lambda(g))>0$. We also note that the mapping $a \to (\overline{\chi}_{1}(a), \ldots,\overline{\chi}_{d}(a))$, where $\{\overline{\chi}_{d_{S}+1}, \ldots, \overline{\chi}_{d}\}=\overline{X}_{C}$ are the central weights, is an isomorphism of real vector spaces $\mathfrak{a} \to \mathbb{R}^{d}$.

For $i=1,\ldots,d_{S}$, we will also fix the norms $||.||_{i}$ on $V_{i}$'s, given by the next lemma.
\begin{lemma}\label{Cartan.norm.correspondance}(\cite{BQpoly})
Let $G$ be as before and let $(V,\rho)$ be an irreducible $\rm k$-rational representation of $G$. Let $\chi_{\rho}$ be the highest weight of $(V,\rho)$. Then, there exists a norm $||.||$ on $V$ such that for all $g \in G$, we have\\
1. $||\rho(g)||=\exp(\overline{\chi}(\kappa(g)))$\\[3pt]
2. $\lambda_{1}(\rho(g))=\exp(\overline{\chi}(\lambda(g)))$.
\end{lemma}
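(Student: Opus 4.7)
The plan is to construct a $K$-invariant norm on $V$ for which the weight decomposition $V = \bigoplus_{\chi} V_\chi$ is sufficiently compatible to yield $\|\rho(a)\| = \max_{\chi} |\chi(a)|$ for every element $a$ in the split torus. The two formulas then fall out of the Cartan and Jordan decompositions combined with the defining property of the highest weight.

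\textbf{Constructing the norm.} In the Archimedean case, I would start from an arbitrary Hermitian inner product on $V$ and average it over $K$ against Haar measure to obtain a $K$-invariant one. A standard consequence of the Cartan involution $\theta$ acting as $a \mapsto a^{-1}$ on $A$ is that each $\rho(a)$, $a \in A$, is self-adjoint for this inner product, so its eigenspaces, namely the $V_\chi$, are mutually orthogonal. In the non-Archimedean case, I would invoke Bruhat-Tits theory to obtain a $K$-stable $\mathcal{O}$-lattice $L \subset V$ splitting as $L = \bigoplus_\chi (L \cap V_\chi)$, and take the sup-norm associated to an $\mathcal{O}$-basis of $L$ adapted to this splitting. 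In either setting, $\rho(k)$ is an isometry for all $k \in K$, and a direct computation from the weight decomposition yields $\|\rho(a)\| = \max_{\chi} |\chi(a)|$ for every $a$ in the torus.

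\textbf{Operator norm formula.} Write the Cartan decomposition $g = k_1 z k_2$ with $z \in Z^+$ and $\log(z) = \kappa(g)$; since the $\rho(k_i)$ are isometries,
\[
\|\rho(g)\| = \|\rho(z)\| = \max_{\chi} |\chi(z)| = \max_{\chi} \exp(\overline{\chi}(\kappa(g))).
\]
By the highest-weight property, every weight $\chi$ of $(V,\rho)$ satisfies $\overline{\chi}_\rho - \overline{\chi} = \sum_{\alpha \in \overline{\Pi}} n_\alpha \overline{\alpha}$ with $n_\alpha \in \mathbb{Z}_{\geq 0}$; since $\kappa(g) \in \mathfrak{a}^+$ gives $\overline{\alpha}(\kappa(g)) \geq 0$ for every simple root, the maximum is attained at $\chi = \chi_\rho$, proving assertion 1.

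\textbf{Spectral radius formula.} The spectral radius is independent of the chosen norm. Using the Jordan decomposition, after passing to $g^n$ in the non-Archimedean case if necessary, $g_h$ is conjugate to $z_g \in Z^+$ with $\log(z_g) = n \lambda(g)$; the unipotent factor contributes no eigenvalue, and the elliptic factor commutes with $g_h$ and has eigenvalues of absolute value one, hence contributes nothing to the spectral radius. Therefore $\lambda_1(\rho(g))^n = \lambda_1(\rho(z_g)) = \max_\chi |\chi(z_g)|$, and the same highest-weight argument applied to $\log(z_g) \in \mathfrak{a}^+$ gives $\lambda_1(\rho(z_g)) = \exp(\overline{\chi}_\rho(\log z_g))$, yielding assertion 2.

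The main technical difficulty is the non-Archimedean part of the first step: producing a $K$-stable lattice compatible with the weight decomposition cannot be achieved by averaging and requires either a smooth integral model of $\mathbf{G}$ or an appeal to Bruhat-Tits theory. Once the norm is in place, the remaining steps are elementary and use only the definition of the Cartan and Jordan projections together with the partial order on weights.
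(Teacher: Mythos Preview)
Your proposal is correct and follows the standard argument. The paper itself does not give a proof of this lemma: it cites \cite{BQpoly} (specifically section 7.4.1) for the construction of the norm and merely remarks that part 2 is norm-independent and follows from the definitions, which is exactly the observation underlying your treatment of the spectral radius formula.
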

We note that $2.$ does not depend on the norm and follows by definitions, and that the norm $||.||$ is Euclidean if $\rm k=\mathbb{R}$ or $\mathbb{C}$, and ultrametric if $\rm k$ is non-Archimedean (see 7.4.1. in \cite{BQpoly}).

Lemma \ref{rrep} and Lemma \ref{Cartan.norm.correspondance}  allow us to control the Cartan and Jordan projection of an element $g \in G$ by looking at the image of $g$ by these projections with the central weights and $g$'s operator norm and spectral radius in the distinguished representations. We now see a first useful corollary of these two lemmata. We include its proof to illustrate their use.

\begin{corollary}[Uniform continuity of Cartan projection] \label{Cartan.stability}
Let $G$ be as before and $\kappa: G \to \mathfrak{a}^{+}$ be a Cartan projection of $G$. For every compact subset $L$ of $G$, there exists a compact subset $M$ of $\mathfrak{a}$ such that for every $g \in G$, we have $\kappa(LgL) \subseteq \kappa(g)+M$.
\end{corollary}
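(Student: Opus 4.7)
The plan is to compose $\kappa$ with the isomorphism $\mathfrak{a}\to\R^{d}$, $x\mapsto(\overline{\chi}_{1}(x),\ldots,\overline{\chi}_{d}(x))$ provided after Lemma \ref{rrep}, and to estimate each coordinate separately by means of the distinguished representations and of the central characters, using that operator norms and absolute values of characters behave submultiplicatively under multiplication.

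First, I would fix $i\in\{1,\ldots,d_{S}\}$ and use Lemma \ref{Cartan.norm.correspondance}, which gives $\overline{\chi}_{i}(\kappa(g))=\log\lVert\rho_{i}(g)\rVert_{i}$ for every $g\in G$. For arbitrary $l_{1},l_{2}\in L$ and $g\in G$, submultiplicativity of the operator norm yields
\begin{equation*}
\lVert\rho_{i}(l_{1}gl_{2})\rVert_{i}\leq \lVert\rho_{i}(l_{1})\rVert_{i}\,\lVert\rho_{i}(g)\rVert_{i}\,\lVert\rho_{i}(l_{2})\rVert_{i},
\end{equation*}
and the symmetric estimate applied to $g=l_{1}^{-1}(l_{1}gl_{2})l_{2}^{-1}$. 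Since $L\cup L^{-1}$ is compact and $\rho_{i}$ is continuous, the quantity $c_{i}(L):=\sup_{l\in L\cup L^{-1}}\log\lVert\rho_{i}(l)\rVert_{i}$ is finite, and we obtain
\begin{equation*}
|\overline{\chi}_{i}(\kappa(l_{1}gl_{2}))-\overline{\chi}_{i}(\kappa(g))|\leq 2c_{i}(L).
\end{equation*}

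For the central weights $\overline{\chi}_{d_{S}+1},\ldots,\overline{\chi}_{d}$ corresponding to characters $\chi\in X_{C}$, I would argue directly: these characters factor through $\mathbf{G}/\mathbf{S}$, and on a maximal compact subgroup $K$ the image of $|\chi|$ is trivial since $|\chi|(K)$ is a compact subgroup of $\R_{>0}$. In view of the Cartan decomposition $g=k\log^{-1}(\kappa(g))k'$, this gives $|\chi(g)|=\exp(\overline{\chi}(\kappa(g)))$, and hence the multiplicativity $|\chi(l_{1}gl_{2})|=|\chi(l_{1})||\chi(g)||\chi(l_{2})|$ gives a bound of the same form as above, with some constant $c_{i}(L)$ for $i=d_{S}+1,\ldots,d$.

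Putting these $d$ coordinate estimates together, one sees that the image of $\kappa(l_{1}gl_{2})-\kappa(g)$ under the isomorphism $x\mapsto(\overline{\chi}_{1}(x),\ldots,\overline{\chi}_{d}(x))$ lies in the compact box $\prod_{i=1}^{d}[-2c_{i}(L),2c_{i}(L)]\subset\R^{d}$, so pulling back by the inverse isomorphism produces the desired compact set $M\subset\mathfrak{a}$. I do not expect a serious obstacle here; the only mild point to check is the uniform bound on $|\chi|$ over a maximal compact subgroup, but this follows from compactness of the image together with the fact that $\R_{>0}$ has no non-trivial compact subgroup (and its non-Archimedean analogue for the other local fields).
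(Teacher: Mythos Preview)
Your proof is correct and follows essentially the same route as the paper: reduce to bounding each coordinate $\overline{\chi}_{i}(\kappa(l_{1}gl_{2}))-\overline{\chi}_{i}(\kappa(g))$, use Lemma \ref{Cartan.norm.correspondance} together with submultiplicativity of operator norms for $i\leq d_{S}$, and use additivity of the central weights for $i>d_{S}$. The only cosmetic difference is that the paper simply asserts the additivity $\overline{\chi}(\kappa(l_{1}gl_{2}))=\overline{\chi}(\kappa(l_{1}))+\overline{\chi}(\kappa(g))+\overline{\chi}(\kappa(l_{2}))$ for central weights, whereas you supply a justification via triviality of $|\chi|$ on $K$; note though that the elements of $X_{C}$ are defined as characters of $\mathbf{Z}$ rather than of $\mathbf{G}$, so your phrase ``factor through $\mathbf{G}/\mathbf{S}$'' is slightly imprecise, but the underlying point (that $\overline{\chi}\circ\kappa$ is additive for $W$-invariant weights) is the correct one.
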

\begin{proof}
By the paragraph following Lemma \ref{rrep}, it suffices to show that there exists a constant $D \geq 0$ such that for every $l_{1},l_{2}\in L$ and for every $\overline{\chi} \in \{\overline{\chi}_{1},\ldots,\overline{\chi}_{d} \}$, we have 
\begin{equation}\label{rjust1}
|\overline{\chi}(\kappa(l_{1}gl_{2}))-\overline{\chi}(\kappa(g))| \leq D
\end{equation}

Set $L^{-1}=\{l^{-1} \; | \;l \in L \}$, $C=\max_{l \in L \cup L^{-1}} \max_{i=1,\ldots,d} |\overline{\chi}_{i}(\kappa(l))|$, $D=2C$ and let $l_{1},l_{2}$ be in $L$.

Then, for each central weight $\overline{\chi}$ (i.e. $\overline{\chi}=\overline{\chi}_{i}$ such that $d_{S}+1 \leq i \leq d$), we have $\overline{\chi}(\kappa(l_{1}gl_{2}))=\overline{\chi}(\kappa(l_{1}))+\overline{\chi}(\kappa(g))+\overline{\chi}(\kappa(l_{2}))$, so that (\ref{rjust1}) is clearly satisfied.

Let now  $\overline{\chi}$ be the highest weight of a distinguished representation $(V,\rho)$. By Lemma \ref{Cartan.norm.correspondance}, for all $h \in G$, we have $\overline{\chi}(\kappa(h))=\log ||\rho(h)||$. Then, since by submultiplicativity of the associated operator norms, for all $x,y,u \in GL(V)$ for a normed vector space $V$, one has $||x^{-1}||^{-1}.||y^{-1}||^{-1}.||u|| \leq ||xuy|| \leq ||x||.||u||.||y||$, we get
\begin{equation*}
\overline{\chi}(\kappa(g)) -2C \leq \overline{\chi}(\kappa(l_{1}gl_{2})) \leq \overline{\chi}(\kappa(g))+2C
\end{equation*} and the result follows.
\end{proof}

\begin{example} \label{example.SL}
If one takes $G=SL_{d}(\mathbb{R})$, then we can write, $\mathfrak{a}=\{(\alpha_{1},\ldots,\alpha_{d}) \in \mathbb{R}^{d} \, |$ $\, \sum \alpha_{i}=0\}$, $\mathfrak{a}^{+}=\{(\alpha_{1},\ldots,\alpha_{d}) \, |  \, \alpha_{1} \geq  \ldots \geq \alpha_{d} \}$,  $\mathfrak{a}^{++}=\{(\alpha_{1},\ldots,\alpha_{d}) \, |$ $ \, \alpha_{1} > \alpha_{2} \ldots > \alpha_{d} \}$, and $K=SO_{d}(R)$. The Cartan projection $\kappa(.)$ associates to an element $g$ of $SL_{d}(\mathbb{R})$, the element of $\mathfrak{a}$ consisting of the logarithms of the diagonal entries of the matrix $A$ in $KAK$ decomposition of $g$, i.e. it is the vector of logarithms of the singular values of $g$ placed in decreasing order. Similarly, Jordan projection $\lambda(.)$ associates to $g$, the logarithms of the modules of eigenvalues of $g$ in decreasing order.

As examples of characters on $A=\exp(\mathfrak{a})$ (elements of $A$ are seen as diagonal matrices), we can exhibit $L_{i}$'s for $i=1, \ldots, d$, defined by $L_{i}(\diag(a_{1}, \ldots, a_{d}))=a_{i}$. The set of roots are the weights of the $\Ad$ representation of $SL(d, \mathbb{R})$, i.e. $R=\{\frac{L_{i}}{L_{j}} \; | \; i \ne j\}$. 
For our choice of $\mathfrak{a}^{+}$, the positive roots are $\Sigma^{+}=\{\frac{L_{i}}{L_{j}} \; | \; i<j \}$ and the set of simple roots $\Pi=\{\frac{L_{i}}{L_{i+1}} \; | \; i=1, \ldots d-1\}$. On $\mathfrak{a}$, we have, for example, $\overline{(\frac{L_{i}}{L_{j}})}(x_{1}, \ldots, x_{d})=x_{i}-x_{j}$. The fundamental weights are $\omega_{i}= \prod_{j=1}^{i} L_{j}$. 

Some examples of proximal irreducible representations are $\sigma_{1}=id$ or, more generally, $\sigma_{i}: SL(\mathbb{R}^{d}) \longrightarrow SL(\bigwedge^{i} \mathbb{R}^{d}) $ where $\sigma_{i}(g):= \bigwedge^{i}g$ for $i=1,\ldots, d-1$. These are also the fundamental representations, meaning that their highest weights are the fundamental weights $\omega_{i}$'s. The partial ordering corresponding to the choice of $\mathfrak{a}^{+}$ on the set of characters of $A$ is simply described as: for $\chi_{1},\chi_{2}:A_{G} \rightarrow ]0,\infty[$, we have $\chi_{1} \geq \chi_{2} \iff \chi_{1}(a) \geq \chi_{2}(a)$ for all $a \in A^{+}=\exp(\mathfrak{a}^{+})$.
\end{example}

\subsection{$\theta$-proximal elements}
Let $(V_{i},\rho_{i})$ be the distinguished representations of $G$ for $i=1,\ldots,d_{S}$. For each $g \in G$, set $\theta_{g}=\{\alpha_{i} \in \Pi\,|\,\rho_{i}(g)$ is a proximal linear transformation of $V_{i}\}$. By the paragraph following Lemma \ref{rrep} and by definition of $\mathfrak{a}^{++}$ for a subset $\theta \subseteq \overline{\Pi}$ (see 2.9.1), $\theta_{g}$ is characterized by saying $\lambda(g) \in \mathfrak{a}_{\theta_{g}}^{++}$. 
\begin{definition}\label{defarepslox}[Benoist \cite{Benoist2}]
1. Let $\theta \subseteq \Pi$. An element $g \in G$ is said to be $\theta$-proximal if for each $\alpha_{i} \in \theta$, $\rho_{i}(g)$ is proximal. \\[3pt]
2. Let $0 < \epsilon \leq r$ and $\theta \subseteq \Pi$. An element $g \in G$ is said to be $(\theta, r,\epsilon)$-proximal, if for each $\alpha_{i} \in \theta$, $\rho_{i}(g)$ is $(r,\epsilon)$-proximal as a linear transformation of $V_{i}$.
\end{definition}
When $\theta=\Pi$, we say that $g$ is $\rm k$-regular or proximal. One notes from the definitions that $\mathfrak{a}^{+}_{\theta}$ is increasing in $\theta$ for inclusion partial orders. Again following Benoist \cite{Benoist2}, we also set 
\begin{definition}
Let $\theta \subseteq \Pi$. We say that a sub-semigroup $\Gamma$ is of type $\theta$, if $\theta$ is the smallest subset of $\Pi$ such that $\{\lambda(g)\,|\, g \in \Gamma\} \subseteq \mathfrak{a}_{\theta}^{+}$.
\end{definition}
If $\Gamma$ is of type $\theta$, we will sometimes denote $\theta=\theta_{\Gamma}$. Note that $\theta_{\Gamma}$ is also characterized by saying that for each $\alpha_{i} \in \Gamma$, there exists $g \in \Gamma$ such that $\rho_{i}(g)$ is proximal. In other words, $\theta_{\Gamma}=\bigcup_{g \in \Gamma} \theta_{g}$.

For a Zariski dense semigroup $\Gamma$ in $G$, we have the following useful characterization of $\theta_{\Gamma}$:
\begin{lemma}\label{Benoist.bounded.walls}[\cite{Benoist2}]
$\alpha_{i} \in \theta_{\Gamma}$ if and only if  $\overline{\alpha}_{i}(\kappa(\Gamma))$ is unbounded.
\end{lemma}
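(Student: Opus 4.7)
My plan is to split the biconditional into its two implications and treat them asymmetrically: the forward direction is elementary and does not use Zariski density, while the reverse direction is the real content and uses it crucially (indeed it fails in general, as the unipotent example $\Gamma=\{u_n\}$ with $u_n=\begin{pmatrix}1 & n\\ 0 & 1\end{pmatrix}$ in $\SL(2,\R)$ shows: $\overline{\alpha}(\kappa(\Gamma))$ is unbounded but no $u_n$ is proximal).

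For the forward direction, if $\alpha_i\in\theta_\Gamma=\bigcup_{g\in\Gamma}\theta_g$, then some $g_0\in\Gamma$ has $\rho_i(g_0)$ proximal, i.e.\ $\overline{\alpha}_i(\lambda(g_0))>0$ (paragraph after Lemma~\ref{rrep}). Combining Lemma~\ref{Cartan.norm.correspondance} with Gelfand's spectral radius formula yields the standard asymptotic $\lambda(g_0)=\lim_n \frac{1}{n}\kappa(g_0^n)$ in $\mathfrak{a}$; applying $\overline{\alpha}_i$ gives $\frac{1}{n}\overline{\alpha}_i(\kappa(g_0^n))\to\overline{\alpha}_i(\lambda(g_0))>0$, so $\{\overline{\alpha}_i(\kappa(g_0^n))\,|\,n\geq 1\}\subseteq\overline{\alpha}_i(\kappa(\Gamma))$ is unbounded.

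For the reverse direction, pick $g_n\in\Gamma$ with $\overline{\alpha}_i(\kappa(g_n))\to\infty$. Since $(V_i,\rho_i)$ is proximal with highest weight $\chi_i$ and second highest weight $\chi_i-\alpha_i$ (all other weights being strictly smaller on $\mathfrak{a}^+$), the largest two singular values of $\rho_i(g_n)$ are $\exp(\overline{\chi}_i(\kappa(g_n)))$ and $\exp((\overline{\chi}_i-\overline{\alpha}_i)(\kappa(g_n)))$, with ratio $\exp(\overline{\alpha}_i(\kappa(g_n)))\to\infty$. Writing $g_n=k_n\exp(\kappa(g_n))u_n$ with $k_n,u_n\in K$ and normalizing $T_n:=\rho_i(g_n)/\|\rho_i(g_n)\|_i$, compactness of $K$ together with the degenerating ratio of singular values lets me extract a subsequence $T_{n_k}\to\pi$ in $\Endo(V_i)$, where $\pi$ is of rank one, with image line $L\subset V_i$ and kernel hyperplane $H\subset V_i$.

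The main obstacle is to upgrade this asymptotic rank-one behavior into an actual proximal element inside $\Gamma$, and here Zariski density intervenes. Fix a non-zero $v\in L$: by irreducibility of $\rho_i$, the $G$-orbit of $v$ spans $V_i$, so the regular map $h\mapsto\rho_i(h)v$ is not $H$-valued, and $\{h\in G\,|\,\rho_i(h)v\in H\}$ is a proper Zariski-closed subset of $G$. Zariski density of $\Gamma$ produces $h\in\Gamma$ with $\rho_i(h)v\notin H$. Replacing $g_{n_k}$ by $hg_{n_k}\in\Gamma$ (by Corollary~\ref{Cartan.stability} still satisfying $\overline{\alpha}_i(\kappa(hg_{n_k}))\to\infty$) and, after extracting once more so that $\|\rho_i(hg_{n_k})\|_i/\|\rho_i(g_{n_k})\|_i\to c>0$, the new normalized limit is $\pi'=c^{-1}\rho_i(h)\pi$, a rank-one operator whose image $\rho_i(h)L$ is transverse to $\ker\pi'=H$. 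Thus $\pi'$ has a simple non-zero eigenvalue on its image line with all other eigenvalues zero. By standard perturbation theory for simple eigenvalues, for all large $k$ the approximant $\rho_i(hg_{n_k})/\|\rho_i(hg_{n_k})\|_i$ has a simple top eigenvalue of modulus uniformly separated from the others, i.e.\ $\rho_i(hg_{n_k})$ is proximal in $\Endo(V_i)$. Since $hg_{n_k}\in\Gamma$, this shows $\alpha_i\in\theta_\Gamma$.
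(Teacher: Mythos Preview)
The paper does not supply its own proof of this lemma; it is quoted from Benoist \cite{Benoist2}. Your argument is correct and is essentially the standard one found there: the forward implication is immediate from $\lambda(g_0)=\lim_n\frac{1}{n}\kappa(g_0^n)$, while for the converse you exploit the fact that the ratio of the two top singular values of $\rho_i(g_n)$ equals $\exp(\overline{\alpha}_i(\kappa(g_n)))$, extract a rank-one limit, use irreducibility together with Zariski density to arrange that the image of this limit is transverse to its kernel, and conclude by continuity of a simple isolated eigenvalue. This is exactly the Tits-type proximality criterion alluded to in the paper before Lemma~\ref{left.multiply.proximal}.

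Two minor points worth tightening. First, when $\rm k$ is non-Archimedean the norm $\|\rho_i(g_n)\|_i$ is a positive real number (a power of $q$), not an element of $\rm k$, so strictly speaking one should normalize by a suitable power of the uniformizer $\varpi$ rather than by the norm itself, in order to keep $T_n$ a $\rm k$-linear endomorphism; this is cosmetic and does not affect the argument. Second, in the perturbation step over $\rm k=\mathbb{R}$ one should remark that the simple eigenvalue near the non-zero eigenvalue of $\pi'$ is automatically real, since complex eigenvalues of real operators occur in conjugate pairs and the rank-one spectral projection near that eigenvalue is stable; you implicitly use this when invoking proximality, which requires the dominant eigenvalue to lie in $\rm k$.
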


\begin{remark}
1. In particular, $\theta_{\Gamma}=\emptyset$ if and only if $\Gamma$ is bounded modulo the centre of $G$.\\[3pt]
2. In case $\rm k=\mathbb{R}$, for a Zariski dense semigroup $\Gamma$ in $G$, it follows by Goldsheid-Margulis \cite{Goldsheid.Margulis} and Benoist-Labourie \cite{Benoist.Labourie} (see also Prasad \cite{Prasad}) that $\theta_{\Gamma}=\Pi$. This is clearly not true for an arbitrary local field: indeed, for $\Gamma=\SL(n,\mathbb{Z}_{p})$ and $G=\SL(n,\mathbb{Q}_{p})$, we have $\theta_{\Gamma}=\emptyset$
\end{remark}

\subsection{Two properties of $(\theta,r,\epsilon)$-proximal elements} We now state the multidimensional counterparts of Lemma \ref{proximal.implies1} and Proposition \ref{spectralcontrolproximal}. We give a proof of the following lemma (see Lemma 4.5. in \cite{Benoist2}) to illustrate the use of previous definitions.

\begin{proposition}\label{loxodromy.implies.Cartan.close.to.Jordan}
Let $G$ be as before and let $\Gamma$ be a Zariski-dense semigroup in $G$. Let $r>0$ be a constant. Then, there exists a compact set $M_{r} \subset \mathfrak{a}$ such that for every $r\geq \epsilon>0$, there exists a compact set $M_{(r,\epsilon)}$ in $\mathfrak{a}$ satisfying $\lim_{\epsilon \rightarrow 0} M_{(r,\epsilon)} \subseteq M_{r}$ (Hausdorff convergence), and such that for every $(\theta_{\Gamma}, r,\epsilon)$-proximal element $g$ of $\Gamma$, we have $\lambda(g)-\kappa(g) \in M_{(r,\epsilon)}$.
\end{proposition}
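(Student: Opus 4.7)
The plan is to bound each coordinate of $\kappa(g)-\lambda(g)$ in a tailored basis
\[
\mathcal{B}=\{\overline{\alpha}_{j}:\alpha_{j}\notin\theta_{\Gamma}\}\cup\{\overline{\chi}_{j}:\alpha_{j}\in\theta_{\Gamma}\}\cup\{\overline{\chi}_{j}:d_{S}<j\leq d\}
\]
of $\mathfrak{a}^{\ast}$. The canonical basis $(\overline{\chi}_{1},\ldots,\overline{\chi}_{d})$ is inadequate here: for $\alpha_{j}\notin\theta_{\Gamma}$ the only uniform boundedness one has on $\Gamma$ (Lemma \ref{Benoist.bounded.walls}) lives on $\overline{\alpha}_{j}$, not on $\overline{\chi}_{j}$. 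A preparatory step is to verify that $\mathcal{B}$ is actually a basis: it has cardinality $d$, the central block is a basis of $\mathfrak{a}_{C}^{\ast}$ by the choice of $X_{C}$ (and every other member of $\mathcal{B}$ vanishes on $\mathfrak{a}_{C}$), and on $\mathfrak{a}_{S}$ any linear dependence $\sum_{j\notin\theta_{\Gamma}}a_{j}\overline{\alpha}_{j}+\sum_{j\in\theta_{\Gamma}}b_{j}\overline{\chi}_{j}=0$ is converted, via $\overline{\alpha}_{j}=\sum_{k}C_{jk}\overline{\omega}_{k}$ and $\overline{\chi}_{j}=n_{j}\overline{\omega}_{j}$, into the vanishing of the linear system indexed by $k\notin\theta_{\Gamma}$ whose matrix is the principal Cartan submatrix $(C_{jk})_{j,k\notin\theta_{\Gamma}}$ — invertible because it is itself the Cartan matrix of the sub-root-system generated by $\theta_{\Gamma}^{c}$.

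Given $\mathcal{B}$, I estimate each $\phi\in\mathcal{B}$ on $\kappa(g)-\lambda(g)$ for a $(\theta_{\Gamma},r,\epsilon)$-proximal $g\in\Gamma$. For a central $\overline{\chi}_{j}$ with $j>d_{S}$, the weight $\chi_{j}$ is $W$-invariant and hence extends to a rational character of $\mathbf{G}$; its modulus is trivial on $K$ (by compactness) and on unipotent subgroups, so a direct computation using Lemma \ref{Cartan.norm.correspondance} and the Jordan decomposition gives
\[
\exp\overline{\chi}_{j}(\kappa(g))=|\chi_{j}(g)|=|\chi_{j}(g_{h})|=\exp\overline{\chi}_{j}(\lambda(g)),
\]
so the coordinate is identically $0$. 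For $\overline{\chi}_{j}$ with $\alpha_{j}\in\theta_{\Gamma}$, the hypothesis is precisely that $\rho_{j}(g)$ is $(r,\epsilon)$-proximal on $V_{j}$, and Lemmas \ref{proximal.implies1} and \ref{Cartan.norm.correspondance} yield
\[
0\leq\overline{\chi}_{j}(\kappa(g)-\lambda(g))=\log\frac{\|\rho_{j}(g)\|}{\lambda_{1}(\rho_{j}(g))}\leq-\log c_{r,\epsilon},
\]
with upper bound tending to $-\log(2r)$ as $\epsilon\to 0$. For $\overline{\alpha}_{j}$ with $\alpha_{j}\notin\theta_{\Gamma}$, Lemma \ref{Benoist.bounded.walls} furnishes a constant $C_{j}\geq 0$ with $\overline{\alpha}_{j}(\kappa(\Gamma))\subseteq[0,C_{j}]$, while $\theta_{\Gamma}=\bigcup_{g\in\Gamma}\theta_{g}$ forces $\lambda(g)\in\mathfrak{a}_{\theta_{\Gamma}}^{+}\subseteq\ker\overline{\alpha}_{j}$; hence $\overline{\alpha}_{j}(\kappa(g)-\lambda(g))\in[0,C_{j}]$, uniformly in $r$ and $\epsilon$.

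For the assembly, let $M_{(r,\epsilon)}$ be the negative of the image, under the inverse of the coordinate isomorphism $\mathfrak{a}\to\mathbb{R}^{d}$ induced by $\mathcal{B}$, of the compact box
\[
\{0\}^{d_{C}}\times\prod_{\alpha_{j}\in\theta_{\Gamma}}[0,-\log c_{r,\epsilon}]\times\prod_{\alpha_{j}\notin\theta_{\Gamma}}[0,C_{j}].
\]
This is compact and by the three estimates above contains $\lambda(g)-\kappa(g)$ for every $(\theta_{\Gamma},r,\epsilon)$-proximal $g\in\Gamma$. Taking $M_{r}$ to be the analogous set with $-\log(2r)$ in place of $-\log c_{r,\epsilon}$ produces the Hausdorff convergence $M_{(r,\epsilon)}\to M_{r}$ as $\epsilon\to 0$, since only the middle factors of the defining box change and they shrink to their limit intervals.

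The main obstacle in this plan is the preparatory linear-algebra verification that $\mathcal{B}$ is a basis of $\mathfrak{a}^{\ast}$: one must simultaneously exploit the splitting $\mathfrak{a}=\mathfrak{a}_{S}\oplus\mathfrak{a}_{C}$, the characterization of $\mathfrak{a}_{\theta_{\Gamma}}$ by vanishing of simple roots outside $\theta_{\Gamma}$, and the invertibility of principal Cartan submatrices. Once this is settled, the three coordinate estimates are either direct invocations of the already-recorded lemmas or a one-line Jordan-decomposition observation; the fact that the $\overline{\alpha}_{j}\notin\theta_{\Gamma}$ coordinate carries a $\Gamma$-dependent bound $C_{j}$ independent of $\epsilon$ is consistent with the $\Gamma$-dependence already implicit in the statement.
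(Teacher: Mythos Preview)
Your proof is correct and follows essentially the same route as the paper: the same mixed basis $\{\overline{\alpha}_{j}:\alpha_{j}\notin\theta_{\Gamma}\}\cup\{\overline{\chi}_{j}:\alpha_{j}\in\theta_{\Gamma}\}\cup\overline{X}_{C}$, the same three-way coordinate estimate via Lemma~\ref{Benoist.bounded.walls}, Lemmas~\ref{proximal.implies1}--\ref{Cartan.norm.correspondance}, and the vanishing on central weights. You supply details the paper only asserts---notably the Cartan-submatrix argument that $\mathcal{B}$ is a basis and the explicit observation $\overline{\alpha}_{j}(\lambda(g))=0$ for $\alpha_{j}\notin\theta_{\Gamma}$---but the architecture is identical.
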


\begin{proof}
The statement is obvious if $\theta_{\Gamma}=\emptyset$ by Lemma \ref{Benoist.bounded.walls}. If not, by the same lemma, choose $C \geq 0$ such that for every $\alpha_{i} \in \theta_{\Gamma}^{c}$, $|\overline{\alpha}_{i}(\kappa(\Gamma))| \leq C$. On the other hand, by Lemma \ref{proximal.implies1}, there exists a constant $C_{r}$ such that for every $r \geq \epsilon \geq 0$, there exist constants $C_{(r,\epsilon)}$ satisfying $\lim_{\epsilon \to 0}C_{(r,\epsilon)}=C_{r}$ and such that, by Lemma \ref{Cartan.norm.correspondance}, for each $\alpha_{i} \in \theta_{\Gamma}$ and all $(\theta_{\Gamma}, r,\epsilon)$-proximal element $g$ of $\Gamma$ , $|\overline{\chi}_{i}(\kappa(g))-\overline{\chi}_{i}(\lambda(g))| \leq C_{(r,\epsilon)}$.Finally, note that for every central weight $\overline{\chi} \in \overline{X}_{C}$, we have $\overline{\chi}(\kappa(g))=\overline{\chi}(\lambda(g))$.

Now the result follows since $\{\overline{\alpha}, \overline{\chi}_{i},\overline{\chi} \, | \, \alpha \in \theta_{\Gamma}^{c}, \alpha_{i} \in \theta_{\Gamma}, \chi \in X_{C}\}$ is a basis of $\mathfrak{a}^{\ast}$.
\end{proof}

We also have the following important counterpart of Proposition \ref{spectralcontrolproximal}. It is proved from this proposition using Lemma \ref{Benoist.bounded.walls}, Lemma \ref{Cartan.norm.correspondance} as in the proof of the previous proposition. 

\begin{theorem}[Benoist \cite{Benoist1}, \cite{Benoist2}]\label{Best} Let $G$ be the group of $\rm k$-points of a connected reductive algebraic group defined over $\rm k$ and let $\Gamma$ be a Zariski dense semigroup in $G$. For every $r \geq \epsilon > 0 $, there exist compact sets $N_{r}$ and $N_{(r,\epsilon)}$ in $\mathfrak{a}$, such that for each $r >0$, we have a Hausdorff convergence $\lim_{\epsilon \rightarrow 0}  N_{(r,\epsilon)} \subseteq N_{r}$, and such that if $g_{1},\ldots, g_{l}$ are $(\theta_{\Gamma}, r,\epsilon)$-proximal elements of $\Gamma$ having the property that (noting $g_{0}=g_{l}$) $d(x_{\rho_{i}(g_{j})}^{+}, X_{\rho_{i}(g_{j+1})}^{<}) \geq 6r$ for all $j=0,\ldots, l-1$ and for all $i=1,\ldots,d$, then  we have that for all $n_{1}, \ldots, n_{l} \geq 1$, the element $g=g_{l}^{n_{l}}\ldots g_{1}^{n_{1}}$
is $(\theta_{\Gamma},2r,2\epsilon)$-proximal, and satisfies
\begin{equation*}
\lambda(g_{l}^{n_{l}}\ldots g_{1}^{n_{1}}) - \sum_{i=1}^{l} n_{i}\lambda(g_{i}) \in l.N_{(r,\epsilon)} \cap \mathfrak{a}_{\theta_{\Gamma}}
\end{equation*}
\end{theorem}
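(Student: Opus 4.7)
The plan is to apply Proposition \ref{spectralcontrolproximal} in each of the distinguished representations indexed by $\alpha_i\in\theta_\Gamma$, translate the resulting multiplicative control on spectral radii into additive control on Jordan projections via Lemma \ref{Cartan.norm.correspondance}(2), and then handle the remaining directions in $\mathfrak{a}^*$ (indices in $\theta_\Gamma^c$ and central weights) using Lemma \ref{Benoist.bounded.walls} together with the characterization of $\theta_\Gamma$. The whole argument runs in parallel with the proof of Proposition \ref{loxodromy.implies.Cartan.close.to.Jordan}; the main work is identifying the right combination of distinguished representations that span $\mathfrak{a}_{\theta_\Gamma}^{\ast}$.

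First I would establish $(\theta_\Gamma,2r,2\epsilon)$-proximality of $g:=g_l^{n_l}\cdots g_1^{n_1}$. For each $\alpha_i\in\theta_\Gamma$, the $\rho_i(g_j)$ are $(r,\epsilon)$-proximal endomorphisms of $V_i$ by Definition \ref{defarepslox}, and the geometric separation hypothesis $d(x^+_{\rho_i(g_j)},X^<_{\rho_i(g_{j+1})})\ge 6r$ is exactly the assumption of Proposition \ref{spectralcontrolproximal} applied inside $V_i$. That proposition then yields that $\rho_i(g)$ is $(2r,2\epsilon)$-proximal, which by Definition \ref{defarepslox}(2) means $g$ is $(\theta_\Gamma,2r,2\epsilon)$-proximal.

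Next I would control the error $\delta:=\lambda(g)-\sum_{j=1}^l n_j\lambda(g_j)$. Since $\Gamma$ is of type $\theta_\Gamma$, all Jordan projections of elements of $\Gamma$ lie in $\mathfrak{a}^+_{\theta_\Gamma}\subset\mathfrak{a}_{\theta_\Gamma}$, so automatically $\delta\in\mathfrak{a}_{\theta_\Gamma}$. To bound $\delta$ it suffices to evaluate it against a basis of $\mathfrak{a}_{\theta_\Gamma}^{\ast}$. The restrictions to $\mathfrak{a}_{\theta_\Gamma}$ of $\{\overline{\chi}_i : \alpha_i\in\theta_\Gamma\}\cup\overline{X}_C$ form such a basis (since the $\overline{\chi}_i$ are proportional to the fundamental weights $\overline{\omega}_i$, which vanish on $\mathfrak{a}_C$ and dualize $\overline{\alpha}_j$). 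For $\alpha_i\in\theta_\Gamma$, the quantitative statement of Proposition \ref{spectralcontrolproximal} gives
\[
D_{r,\epsilon}^{-l}\le \frac{\lambda_1(\rho_i(g))}{\prod_{j}\lambda_1(\rho_i(g_j))^{n_j}}\le D_{r,\epsilon}^{l},
\]
and Lemma \ref{Cartan.norm.correspondance}(2) converts this into $|\overline{\chi}_i(\delta)|\le l\log D_{r,\epsilon}$. For a central weight $\overline{\chi}\in\overline{X}_C$, the corresponding character of $\mathbf{G}$ is well defined and factors through the abelianization, so $|\chi(g)|=|\chi(g_h)|$ and $\overline{\chi}\circ\lambda$ is a homomorphism $G\to\mathbb{R}$; in particular $\overline{\chi}(\delta)=0$. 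Setting $N_{(r,\epsilon)}$ to be the closed box in $\mathfrak{a}_{\theta_\Gamma}$ cut out by $|\overline{\chi}_i(\cdot)|\le\log D_{r,\epsilon}$ for $\alpha_i\in\theta_\Gamma$ and $\overline{\chi}(\cdot)=0$ for $\overline{\chi}\in\overline{X}_C$ gives the claimed inclusion $\delta\in l\cdot N_{(r,\epsilon)}\cap\mathfrak{a}_{\theta_\Gamma}$, and the Hausdorff convergence $N_{(r,\epsilon)}\to N_r$ as $\epsilon\to 0$ comes directly from $D_{r,\epsilon}\to D_r$ in Proposition \ref{spectralcontrolproximal}. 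The only subtle step is recognizing that the $\overline{\chi}_i|_{\mathfrak{a}_{\theta_\Gamma}}$ and central weights together span $\mathfrak{a}_{\theta_\Gamma}^*$; once this linear-algebra point is in place, the rest is a direct translation of the $V_i$-level estimate via Lemma \ref{Cartan.norm.correspondance}.
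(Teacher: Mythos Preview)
Your proposal is correct and follows exactly the route the paper indicates: the paper does not give a detailed proof but only the one-line sketch ``It is proved from this proposition using Lemma \ref{Benoist.bounded.walls}, Lemma \ref{Cartan.norm.correspondance} as in the proof of the previous proposition,'' and you have carried out precisely that program—applying Proposition \ref{spectralcontrolproximal} in each distinguished representation $\rho_i$ with $\alpha_i\in\theta_\Gamma$, translating via Lemma \ref{Cartan.norm.correspondance}(2), and handling the remaining directions in $\mathfrak{a}^\ast$. The one small difference is that where the paper's sketch cites Lemma \ref{Benoist.bounded.walls} (a $\kappa$-statement), you instead invoke directly the definition of $\theta_\Gamma$ via Jordan projections to get $\delta\in\mathfrak{a}_{\theta_\Gamma}$; this is cleaner for the present statement, and your linear-algebra observation that $\{\overline{\chi}_i|_{\mathfrak{a}_{\theta_\Gamma}}:\alpha_i\in\theta_\Gamma\}\cup\overline{X}_C$ spans $\mathfrak{a}_{\theta_\Gamma}^\ast$ (equivalently, that the principal $\theta_\Gamma^c\times\theta_\Gamma^c$ submatrix of the Cartan matrix is invertible) is the correct way to close the argument.
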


Motivated by this result, analogously to Definition \ref{defnarepsSchottky1}, we single out the following

\begin{definition}\label{defnarepsSchottky2}
1. Let $G$ be as above, $r \geq \epsilon >0$ be given constants and let $\theta \subseteq \Pi$. A subset $E$ of $G$ is said to be an $(\theta, r,\epsilon)$-Schottky family, if for each $\alpha_{i} \in \theta$, $\rho_{i}(E)$ is an $(r,\epsilon)$-Schottky family.\\[3pt]
2. A subset $E$ of $G$ consisting of $\theta$-proximal elements is said to be $a$-narrow, if for each $\alpha_{i} \in \theta$, $\rho_{i}(E)$ is $a$-narrow in $\mathbb{P}(V_{i})$.
\end{definition}

\subsection{Abels-Margulis-Soifer}

\begin{lemma}[Simultaneous proximality, Lemma 5.15 \cite{AMS}]
Let $G$ be as before and $\Gamma$ be a Zariski dense semigroup in $G$. Then, $\Gamma$ contains a $\theta_{\Gamma}$-proximal element.
\end{lemma}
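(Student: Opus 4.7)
The plan is to prove the lemma by induction on $|\theta_\Gamma|$. The case $\theta_\Gamma = \emptyset$ is vacuous, since every element is then $\emptyset$-proximal. For the inductive step, suppose by induction that $\Gamma$ contains a $\theta$-proximal element $g_1$ for some $\theta \subsetneq \theta_\Gamma$, and pick $\alpha \in \theta_\Gamma \setminus \theta$; by the definition $\theta_\Gamma = \bigcup_{g \in \Gamma} \theta_g$, there exists $g_2 \in \Gamma$ with $\rho_\alpha(g_2)$ proximal on $V_\alpha$. The goal is to exhibit a $(\theta \cup \{\alpha\})$-proximal element of $\Gamma$ of the form $g_1^N \gamma g_2^M$ for a suitable $\gamma \in \Gamma$ and large integers $N, M$.

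The driving mechanism is spectral. By Remark~\ref{proximality.definition.remark}.2, for each $\alpha_i \in \theta$ the rescaled powers $\lambda_1(\rho_i(g_1))^{-N}\,\rho_i(g_1)^N$ converge to the rank-one projection onto $v^+_{\rho_i(g_1)}$ along $V^<_{\rho_i(g_1)}$, and analogously $\rho_\alpha(g_2)^M / \lambda_1(\rho_\alpha(g_2))^M$ converges to the rank-one projection onto $v^+_{\rho_\alpha(g_2)}$ along $V^<_{\rho_\alpha(g_2)}$. Computing the dominant eigenvalue of the resulting rank-one approximation yields two clean criteria: (A) for $N$ large, $\rho_i(g_1^N \gamma g_2^M)$ is proximal as soon as $\rho_i(\gamma g_2^M)\,v^+_{\rho_i(g_1)} \notin V^<_{\rho_i(g_1)}$ for $\alpha_i \in \theta$; (B) for $M$ large, $\rho_\alpha(g_1^N \gamma g_2^M)$ is proximal as soon as $\rho_\alpha(g_1^N \gamma)\,v^+_{\rho_\alpha(g_2)} \notin V^<_{\rho_\alpha(g_2)}$.

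Each of the conditions (A) and (B) cuts out a non-empty Zariski-open subset of $G$: non-emptiness follows from irreducibility of the distinguished representations $(V_i,\rho_i)$ and $(V_\alpha,\rho_\alpha)$, which prevents any $G$-orbit of a nonzero vector from sitting inside a proper subspace. Their finite intersection is therefore a non-empty Zariski-open subset of the irreducible variety $G$, and Zariski density of $\Gamma$ produces a $\gamma \in \Gamma$ inside it. Taking $N$ and $M$ sufficiently large then makes $g_1^N \gamma g_2^M \in \Gamma$ a $(\theta \cup \{\alpha\})$-proximal element, completing the inductive step; after $|\theta_\Gamma|$ iterations we obtain a $\theta_\Gamma$-proximal element of $\Gamma$.

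The principal subtlety, which I expect to be the main technical obstacle, is the mutual dependence of $\gamma$ and $(N,M)$: condition (A) involves $M$ while (B) involves $N$, and the required lower bounds on $N,M$ depend in turn on how far $\gamma$ lies from the bad locus. This is untangled by fixing a modest $M_0$, imposing (A) for $M = M_0$ together with the extra Zariski-open condition that $\rho_\alpha(\gamma)\,v^+_{\rho_\alpha(g_2)}$ lies outside the largest $\rho_\alpha(g_1)$-invariant proper subspace of $V_\alpha$ contained in $V^<_{\rho_\alpha(g_2)}$; the latter condition is non-empty and guarantees that (B) holds for arbitrarily large $N$, among which one then chooses $N$ sufficiently large for (A) to actually force proximality in each $\rho_i$, and increases $M \geq M_0$ as needed for (B) to do the same in $\rho_\alpha$. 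This bookkeeping is where the argument concentrates, but it is routine given Zariski density and the elementary spectral analysis above.
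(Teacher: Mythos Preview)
The paper does not give its own proof of this lemma; it is simply quoted from \cite{AMS}. So there is nothing to compare against, and your attempt must stand on its own.

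Your inductive strategy and the spectral/rank-one analysis leading to conditions (A) and (B) are correct, and you are right that the only real difficulty is the mutual dependence of $N$, $M$, and $\gamma$. However, your proposed resolution does not close the loop. You fix $M_{0}$, impose (A) at $M=M_{0}$ and the auxiliary condition (C), pick $\gamma$, then pick a large $N$ (among the infinitely many for which (B) holds) so that $\rho_{i}(g_{1}^{N}\gamma g_{2}^{M_{0}})$ is proximal for every $\alpha_{i}\in\theta$, and finally ``increase $M\geq M_{0}$'' to force $\rho_{\alpha}$-proximality. But at that last step you have changed the element to $g_{1}^{N}\gamma g_{2}^{M}$ with $M>M_{0}$; condition (A) was only verified at $M_{0}$, and the lower bound on $N$ needed for $\rho_{i}$-proximality depends on $M$ through the distance of $\rho_{i}(\gamma g_{2}^{M})v^{+}_{\rho_{i}(g_{1})}$ to $V^{<}_{\rho_{i}(g_{1})}$. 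So proximality in the $\rho_{i}$'s is no longer guaranteed for the final element, and the circularity you identified has not actually been broken.

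The gap is repairable, but it needs a symmetric treatment: for each $\alpha_{i}\in\theta$ impose, in addition, the Zariski-open condition that $\rho_{i}(\gamma)$ does not send the $\rho_{i}(g_{2})$-cyclic subspace generated by $v^{+}_{\rho_{i}(g_{1})}$ into $V^{<}_{\rho_{i}(g_{1})}$ (non-empty by irreducibility). This guarantees that (A) holds for arbitrarily large $M$ as well, by the same invariant-subspace argument you used for (C). One must then still argue that a \emph{common} large $M$ works for all $\alpha_{i}\in\theta$ and that the final choice of $(N,M)$ can be made coherently; this is where the bookkeeping becomes genuinely delicate, and a cleaner route is to pass first to high powers so that $g_{1}$ and $g_{2}$ are $(r,\epsilon)$-proximal in their respective representations and then use a ping-pong/transversality argument in the spirit of Lemma~\ref{left.multiply.proximal} and Proposition~\ref{spectralcontrolproximal}, which avoids the two-parameter limit altogether.
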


The following important finiteness result of Abels-Margulis-Soifer \cite{AMS} is a considerable refinement of the previous lemma. It says that in a Zariski dense semigroup $\Gamma$ of $G$, for some $r>0$, one can effectively generate many $(\theta_{\Gamma},r,\epsilon)$-proximal elements. It will be of crucial use in our considerations. We also note that our Lemma \ref{eta.dispersion} is inspired by the proof of this theorem, for which we refer the reader to the original \cite{AMS} or for another treatment, to Benoist's \cite{BLuminy}, \cite{Benoist2} or Quint's \cite{Quint.divergence}. 

\begin{theorem}[Abels-Margulis-Soifer \cite{AMS}]\label{AMS} 
Let $G$ and $\Gamma$ be as before. Then, there exists $0<r=r(\Gamma)$ such that for all $0<\epsilon \leq r$, there exists a finite subset $F$ of $\Gamma$ with the property that for every $\gamma \in G$, there exists $f \in F $ such that $\gamma f$ is $(\theta_{\Gamma},r,\epsilon)$-proximal.
\end{theorem}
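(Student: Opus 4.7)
The strategy combines three tools introduced above: the simultaneous proximality lemma giving one $\theta_\Gamma$-proximal element $g_0\in\Gamma$; Zariski density of $\Gamma$, used to enrich $g_0$ into a finite family with well-distributed attracting/repelling data; and a pigeonhole argument in the product of flag varieties attached to the distinguished representations $(V_i,\rho_i)$ for $\alpha_i\in\theta_\Gamma$. Since $(\theta_\Gamma,r,\epsilon)$-proximality is a condition checked representation by representation, it suffices to produce a single finite set $F\subset\Gamma$ that works simultaneously for the finitely many $\alpha_i\in\theta_\Gamma$.

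The first step is to pass to a high power $g_0^k$ which, by Remark \ref{proximality.definition.remark}.2, is $(\theta_\Gamma,r_0,\epsilon_k)$-proximal with $\epsilon_k\to 0$ for some fixed $r_0>0$. One then considers the $\Gamma$-conjugates $\gamma g_0^k \gamma^{-1}$ for $\gamma\in\Gamma$: the assignment sending $\gamma$ to the tuple of attracting lines and repelling hyperplanes $\bigl(x^+_{\rho_i(\gamma g_0^k\gamma^{-1})},\,X^<_{\rho_i(\gamma g_0^k\gamma^{-1})}\bigr)_{\alpha_i\in\theta_\Gamma}$ is a rational morphism into $\prod_{i}\bigl(\mathbb{P}(V_i)\times\Grass(V_i)\bigr)$. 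By Zariski density of $\Gamma$, its image is Zariski dense in the corresponding $G$-orbit, which is open in the product of flag varieties; hence the image is classically dense there, and a finite $\epsilon$-net yields a finite family $H=\{h_1,\ldots,h_M\}\subset\Gamma$ of $(\theta_\Gamma,r,\epsilon)$-proximal elements whose attracting points and repelling hyperplanes are $\epsilon$-dense in the relevant flag varieties.

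Second, for an arbitrary $\gamma\in G$, the Cartan decomposition $\gamma=k_\gamma\exp(\kappa(\gamma))u_\gamma$ yields, for each $\alpha_i\in\theta_\Gamma$, a \emph{Cartan attracting line} $x_i^+(\gamma):=k_\gamma\cdot v^+_{\rho_i}$ and a \emph{Cartan repelling hyperplane} $X_i^-(\gamma):=u_\gamma^{-1}\cdot X^<_{\rho_i}$ in $V_i$: the linear map $\rho_i(\gamma)$ contracts the complement of any fixed neighbourhood of $X_i^-(\gamma)$ into a small neighbourhood of $x_i^+(\gamma)$, with contraction rate controlled by the singular gap $\overline{\alpha}_i(\kappa(\gamma))$. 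By the density property of $H$, one finds $h_j\in H$ whose attracting point $x^+_{\rho_i(h_j)}$ stays at distance $\geq 6r$ from $X_i^-(\gamma)$, and whose repelling hyperplane $X^<_{\rho_i(h_j)}$ stays at distance $\geq 6r$ from $x_i^+(\gamma)$, simultaneously for every $\alpha_i\in\theta_\Gamma$. A direct computation in the spirit of Proposition \ref{spectralcontrolproximal} then shows that $\gamma h_j$ is $(\theta_\Gamma,r,\epsilon)$-proximal, the attracting line of $\rho_i(\gamma h_j)$ being $\epsilon$-close to $\rho_i(\gamma)(x^+_{\rho_i(h_j)})$ and its repelling hyperplane $\epsilon$-close to $X^<_{\rho_i(h_j)}$; one takes $F:=H$.

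The main obstacle is the case where $\kappa(\gamma)$ is close to a wall $\{\overline{\alpha}_i=0\}$: there the singular gap of $\rho_i(\gamma)$ is small and the ``Cartan data'' $(x_i^+(\gamma),X_i^-(\gamma))$ is only a coarse proxy for the dynamics at scale $\epsilon$, so the direct computation above requires more care. The resolution, following \cite{AMS} and the expositions in \cite{Benoist2} and \cite{Quint.divergence}, is that the $h_j\in H$ can be chosen uniformly $(\theta_\Gamma,r_0,\epsilon')$-proximal with $\epsilon'\ll r_0$ \emph{fixed} independently of $\gamma$: the sharp proximal dynamics of $h_j$ then dominates the product $\gamma h_j$ regardless of the degeneracy of $\rho_i(\gamma)$, as long as the separation condition from the pigeonhole step is satisfied. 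This yields $r=r(\Gamma)$ depending only on $g_0$, and a finite $F$ whose size depends on the covering density of $H$.
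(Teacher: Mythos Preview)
The paper does not give its own proof of this theorem: it is stated as a result of Abels--Margulis--Soifer with a pointer to \cite{AMS}, \cite{Benoist2}, \cite{BLuminy}, \cite{Quint.divergence}, and the only internal echo of the argument is the remark that Lemma~\ref{eta.dispersion} is ``inspired by the proof of this theorem''. So there is nothing in the paper to compare against beyond that hint; your sketch follows the same sources the paper cites and is in the right spirit.

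One point in your outline should be tightened. Your pigeonhole step selects $h_j$ using the Cartan data $(x_i^+(\gamma),X_i^-(\gamma))$ of the \emph{arbitrary} element $\gamma$, and then your final paragraph says the difficulty when $\overline{\alpha}_i(\kappa(\gamma))$ is small is resolved because ``the sharp proximal dynamics of $h_j$ dominates \ldots\ as long as the separation condition from the pigeonhole step is satisfied''. As written this is circular: the separation condition was formulated through the very Cartan data you have just declared unreliable. The clean formulation avoids the Cartan data of $\gamma$ altogether. What is actually needed for $\gamma h_j$ to be $(r,\epsilon)$-proximal (via the mechanism of Lemma~\ref{left.multiply.proximal}) is the pair of conditions (a) $\rho_i(\gamma)x^+_{\rho_i(h_j)}$ is at distance $\geq\delta$ from $X^<_{\rho_i(h_j)}$, and (b) the \emph{local} Lipschitz constant of $\rho_i(\gamma)$ at $x^+_{\rho_i(h_j)}$ is bounded, which amounts to $\|\rho_i(\gamma)v^+_{\rho_i(h_j)}\|\geq c\,\|\rho_i(\gamma)\|$. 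Both conditions are well-defined for every $\gamma\in G$, and both are arranged by a pigeonhole in the compact product of flag varieties, using that the data $(x^+_{\rho_i(h_j)},X^<_{\rho_i(h_j)})_j$ are in general position --- this is precisely the content of the dispersion argument that the paper isolates as Lemma~\ref{eta.dispersion}. With this reformulation there is no ``degenerate $\gamma$'' obstacle at all.
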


\begin{remark} \label{AMS.remark}
1. While dealing with the probability measures of uncountable support, we will use the following immediate extension of this result: there exists $0<r=r(\Gamma)$ such that for all $0<\epsilon \leq r$, we can find a finite subset $F$ of $\Gamma$ and bounded  neighbourhoods $V_{f}$ in $G$ of each $f \in F$, with the property that for each $\gamma \in G$, there exist a neighbourhood $U_{\gamma}$ of $\gamma$ in G, and $f \in F$ such that for all $f' \in V_{f}$ and $\gamma' \in U_{\gamma}$, $\gamma' f'$ is $(\theta_{\Gamma},r,\epsilon)$-proximal. Indeed, this extension readily follows by: 1. The set of proximal elements in $G$ is open in $G$. 2. The attracting direction $x_{g}^{+} \in \mathbb{P}(V)$ and the repulsive hyperplane $X_{g}^{<} \subset \mathbb{P}(V)$ depend continuously on $g \in GL(V)$, where $V$ is a finite dimensional vector space.\\[3pt]
2. Up to enlarging $r(\Gamma)$ given by the previous theorem,  we will denote by the \textbf{same} $r(\Gamma)>0$, the constant given by 1. This should not cause any confusion.
\end{remark}

\section{Existence of LDP} \label{section3}

This section is devoted to the proof of existence of LDP for the sequence $\frac{1}{n}\kappa(Y_{n})$ of random variables (i.e. existence of a rate function $I:\mathfrak{a} \to [0,\infty]$ as in Definition \ref{defn.LDP}). We first recall our setting and give more precise versions of Theorem \ref{intro.kanitsav1} of the introduction.

\subsection{Statement of results}
Given a probability measure $\mu$ on $G$ (endowed with its Borel $\sigma$-algebra), $Y_{n}$ denotes the $n^{th}$-step of the left $\mu$-random walk, i.e. $Y_{n}=X_{n}.\ldots.X_{1}$, where the random walk increments $X_{i}$'s  are $G$-valued independent random variables with distribution $\mu$, defined on a probability space $(\Omega, \mathcal{F}, \mathbb{P})$, henceforth fixed. Note that since the distributions of left and right random walks are the same, for the results of this article, the choice of left random walk is only a matter of convenience.

Our first Theorem \ref{kanitsav1} is a variant of Theorem \ref{intro.kanitsav1}: in this first result, we do not assume any moment condition on the probability measure $\mu$, in turn we have a slightly weaker conclusion. Namely, we obtain a weak LDP which we describe now (for more details see \cite{Dembo.Zeitouni}).

In Definition \ref{defn.LDP}, an LDP with a rate function $I$ for a sequence of random variables $Z_{n}$ (in our case, to be thought of as $\frac{1}{n} \kappa(Y_{n})$) with values in a topological space $X$, can be reformulated as saying\\[3pt]
1. (Upper bound) For any closed set $F \subset X$, $\underset{n \rightarrow \infty}{\limsup} \frac{1}{n}\log \mathbb{P}(Z_{n} \in F) \leq \underset{x \in F}{-\inf I(x)}$. \\
2. (Lower bound) For any open set $O \subset X$,
 $\underset{n \rightarrow \infty}{\liminf} \frac{1}{n}\log \mathbb{P}(Z_{n} \in O) \geq \underset{x \in O}{-\inf I(x)}$.
  
The definition of a weak LDP is a slight weakening of the upper bound in the previous reformulation and it is the following:
\begin{definition}
A sequence of $X$-valued random variables $Z_{n}$ is said to satisfy a weak LDP with a rate function $I:X \to [0,\infty]$ if the upper bound 1. (above) holds for all compact sets and the lower bound 2. holds the same, for all open sets in $X$.
\end{definition} 
In passing, we note the following 
\begin{remark} If $X$ is locally compact or a polish space and a sequence of random variables $Z_{n}$ on $X$ satisfies a weak LDP with a rate function $I$, then $I$ is unique.
\end{remark}

With this definition, our first result reads:
\begin{theorem} \label{kanitsav1}
Let $\rm k$ be a local field and let $G$ be the group of $\rm k$-points of a connected reductive algebraic group defined over $\rm k$. Let $\mu$ be a probability measure on $G$ and suppose that its support generates a Zariski dense sub-semigroup in $G$. Then, the sequence of $\mathfrak{a}^{+}$-valued random variables $\frac{1}{n}\kappa(Y_{n})$ satisfies a weak LDP with a convex rate function $I:\mathfrak{a}^{+} \to [0,\infty]$.
\end{theorem}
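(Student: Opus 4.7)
My plan is to invoke the general LDP-existence criterion (Theorem \ref{existLDP}), which reduces the problem to verifying that the lower and upper local rate functions
\[
I_{li}(x) := -\lim_{\eta \to 0}\liminf_{n\to\infty}\tfrac{1}{n}\log\mathbb{P}\!\left(\tfrac{1}{n}\kappa(Y_n)\in B(x,\eta)\right), \qquad I_{ls}(x) := -\lim_{\eta \to 0}\limsup_{n\to\infty}\tfrac{1}{n}\log\mathbb{P}\!\left(\tfrac{1}{n}\kappa(Y_n)\in B(x,\eta)\right)
\]
coincide at every $x \in \mathfrak{a}^{+}$. Once this is established, the common value $I(x)$ is automatically a lower semi-continuous rate function, and convexity of $I$ will come for free from the sub-additive-type inequality used to prove the coincidence.

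The heart of the argument is an almost-additivity property of $\kappa$ along products drawn from a single $(\theta_{\Gamma},r,\epsilon)$-Schottky family $E$. Indeed, combining Theorem \ref{Best} with Proposition \ref{loxodromy.implies.Cartan.close.to.Jordan} (to pass from the Jordan to the Cartan projection) furnishes a compact set $M\subset \mathfrak{a}$ such that $\kappa(g_l\cdots g_1)-\sum_{i=1}^{l}\kappa(g_i)\in l\cdot M$ for every $g_1,\dots,g_l\in E$. Hence, for a walk whose increments are conditioned to lie in $E$, independence and uniform continuity of $\kappa$ (Corollary \ref{Cartan.stability}) yield the sub-additive relation
\[
\mathbb{P}\!\left(\tfrac{1}{n+m}\kappa(Y_{n+m})\in B(tx+(1-t)y,\eta)\right)\ \geq\ \mathbb{P}\!\left(\tfrac{1}{n}\kappa(Y_n)\in B(x,\eta/2)\right)\cdot\mathbb{P}\!\left(\tfrac{1}{m}\kappa(Y_m)\in B(y,\eta/2)\right)
\]
for $t=n/(n+m)$ and $n,m$ sufficiently large. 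A standard Fekete-type extraction then gives $I_{li}(x)=I_{ls}(x)$ and convexity of the common value simultaneously, \emph{provided} the walk actually lives in such a Schottky family.

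The main obstacle, and the reason the argument is not immediate, is that $\mu$ will generally not be supported in a Schottky family. I would resolve this by a two-step reduction. First, apply Abels--Margulis--Soifer in the neighbourhood-version of Remark \ref{AMS.remark}: choose a finite set $F\subset\Gamma$ and bounded open neighbourhoods $V_f$ of each $f\in F$ with $\mathbb{P}(Y_{N_0}\in V_f)>0$ for some fixed $N_0\in\mathbb{N}$, such that for every $\gamma\in G$ some product $\gamma\xi$ with $\xi\in V_f$ is $(\theta_{\Gamma},r,\epsilon)$-proximal; Corollary \ref{Cartan.stability} guarantees that $\kappa(\gamma\xi)$ differs from $\kappa(\gamma)$ by a uniformly bounded amount. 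This allows one to replace $Y_n$ by $Y_{n+N_0}$ at the cost only of a multiplicative constant in probability and a bounded additive shift in $\kappa$, both invisible on the exponential scale. Second, partition the space of possible attracting points $x_{\rho_i(g)}^{+}$ and repelling hyperplanes $X_{\rho_i(g)}^{<}$ into finitely many narrow pieces and apply pigeonhole along the trajectory: on an exponentially non-negligible event, all proximal blocks fall in a common narrow piece, so that the transversality condition of Definition \ref{defnarepsSchottky2} is automatically satisfied and the restricted walk is supported on a bona fide $(\theta_{\Gamma},r,\epsilon)$-Schottky family. The almost-additivity of the previous paragraph then applies, and combining everything yields $I_{li}=I_{ls}$ together with convexity of $I$. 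The delicate technical point throughout is tracking that each of these reductions costs only a multiplicative constant in probability; this is exactly what the continuity assertions in Remark \ref{AMS.remark} and the finiteness of the pigeonhole partition are designed to guarantee.
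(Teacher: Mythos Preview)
Your reduction to Theorem \ref{existLDP} and the use of Abels--Margulis--Soifer together with a pigeonhole/narrowness argument to restrict to a Schottky family is exactly the paper's strategy for the \emph{existence} of the weak LDP, and that part of your sketch is correct.

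The gap is in your claim that convexity ``comes for free'' from the same sub-additive inequality. For the inequality
\[
\mathbb{P}\!\left(\tfrac{1}{n+m}\kappa(Y_{n+m})\in B(tx+(1-t)y,\eta)\right)\ \geq\ c\cdot\mathbb{P}\!\left(\tfrac{1}{n}\kappa(Y_n)\in B(x,\eta/2)\right)\cdot\mathbb{P}\!\left(\tfrac{1}{m}\kappa(Y_m)\in B(y,\eta/2)\right)
\]
to hold, you need the almost-additivity of $\kappa$ for products where the first $n$ factors come from a Schottky family $E_1$ (produced by your reduction near $x$, at time $n$) and the last $m$ factors come from a Schottky family $E_2$ (produced near $y$, at time $m$). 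Theorem \ref{Best} and Proposition \ref{SchottkyCartaniterate} only apply when \emph{all} factors lie in a \emph{single} Schottky family, i.e.\ when $E_1\cup E_2$ is itself $(\theta_\Gamma,r,\epsilon)$-Schottky. Your pigeonhole step makes each $E_i$ narrow, but it gives you no control over the cross-transversality $d(x^+_{\rho_i(\gamma_1)},X^<_{\rho_i(\gamma_2)})\geq 6r$ for $\gamma_1\in E_1$, $\gamma_2\in E_2$: the narrow pieces selected at time $n$ and at time $m$ are chosen independently and there is no reason for them to coincide, nor for two distinct narrow pieces to be mutually transverse.

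This is precisely why the paper treats convexity in a separate and substantially longer Section \ref{section4}. The missing ingredient is the Dispersion Lemma \ref{eta.dispersion}: it produces a finite set $M_t\subset\Gamma$ such that, given any two narrow Schottky families $E_1,E_2$, one can find $\gamma_1,\gamma_2\in M_t$ for which $\gamma_1E_1\cup\gamma_2E_2$ \emph{is} a Schottky family (Proposition \ref{convexityprop}). Left-multiplication by these fixed $\gamma_i$ costs only a bounded number of steps and a fixed multiplicative constant in probability (Corollary \ref{convexitycorol}), after which your sub-additive inequality becomes valid and the convexity argument goes through. Without this merging step, the convexity claim is unjustified.
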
 

The content of the next theorem is that under some moment hypotheses on $\mu$, one can strengthen the weak LDP of the previous theorem to a (full) LDP with a proper rate function, for which we can write an alternative expression.

Recall that a probability measure $\mu$ on $G$ is said to have a finite exponential moment if there exists $c>0$ such that $\int e^{c||\kappa(g)||}\mu(dg)<\infty$, where $||.||$ is an arbitrary norm on $\mathfrak{a}$. We shall say that $\mu$ has a strong exponential moment, if $\int e^{c||\kappa(g)||}\mu(dg)<\infty$ for all $c>0$. This is clearly satisfied if $\mu$ is of bounded support. Moreover, define the limit Laplace transform of the sequence $\frac{1}{n}\kappa(Y_{n})$ as 
\begin{equation*}
\Lambda(\lambda) = \limsup_{n \rightarrow \infty}\frac{1}{n}\log \mathbb{E}[e^{\lambda(\kappa(Y_{n}))}]
\end{equation*} In these terms, we have

\begin{theorem}\label{kanitsav2}
Let $G$ and $\mu$ be as in Theorem \ref{kanitsav1}. Suppose moreover that $\mu$ has a finite exponential moment. Then, for the sequence $\frac{1}{n}\kappa (Y_{n})$ of random variables, a (full) LDP exists with a proper convex rate function $I:\mathfrak{a}^{+} \to [0,\infty]$. Furthermore, if $\mu$ has a strong exponential moment, then we can identify $I$ with the Legendre transform of $\Lambda$, i.e. for all $x \in \mathfrak{a}$, we have $I(x)=\sup_{\lambda \in \mathfrak{a}^{\ast}}(\lambda(x)-\Lambda(\lambda))$. 
\end{theorem}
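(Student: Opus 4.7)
The plan is to upgrade the weak LDP of Theorem \ref{kanitsav1} to a full LDP with proper rate function by establishing \emph{exponential tightness} of the sequence $\tfrac{1}{n}\kappa(Y_n)$, and then, under the strong exponential moment hypothesis, to identify $I$ with $\Lambda^\ast$ via Varadhan's integral lemma together with Fenchel--Moreau biduality.

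\textbf{Step 1 (exponential tightness).} Assuming only the finite exponential moment, I will show that for every $M>0$ there exists a compact $K_M\subset\mathfrak{a}$ with
$$\limsup_{n\to\infty}\tfrac{1}{n}\log\P\bigl(\tfrac{1}{n}\kappa(Y_n)\notin K_M\bigr)\le -M.$$
The key input is Lemma \ref{Cartan.norm.correspondance}: for each distinguished highest weight $\overline{\chi}_i$ one has $\overline{\chi}_i(\kappa(g))=\log\|\rho_i(g)\|$, which is subadditive under multiplication by submultiplicativity of the operator norm, while each central weight $\overline{\chi}\in\overline{X}_C$ is genuinely additive on $\kappa$. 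Consequently, for $t\ge 0$,
$$\E\bigl[e^{t\,\overline{\chi}_i(\kappa(Y_n))}\bigr]\le \E\bigl[e^{t\,\overline{\chi}_i(\kappa(X_1))}\bigr]^n,$$
and applying the same bound in the contragredient distinguished representations controls $-\overline{\chi}_i\circ\kappa$. Since $\{\overline{\chi}_1,\ldots,\overline{\chi}_d\}$ is a basis of $\mathfrak{a}^\ast$, the finite exponential moment of $\|\kappa\|$ transfers to finite exponential moments of each of the $\pm\overline{\chi}_i\circ\kappa$. A standard Chebyshev/Cram\'{e}r estimate on each coordinate then yields
$$\P\bigl(\|\kappa(Y_n)\|\ge nR\bigr)\le C\,e^{-c(R)n}$$
with $c(R)\to\infty$ as $R\to\infty$, so taking $K_M$ a closed ball of sufficiently large radius closes the step. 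Combining exponential tightness with the weak LDP of Theorem \ref{kanitsav1} upgrades it to a full LDP with a good (hence proper) rate function, which must coincide with the convex $I$ already produced.

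\textbf{Step 2 (Legendre transform).} Under the strong exponential moment, the submultiplicativity bounds of Step 1 give for every $\lambda\in\mathfrak{a}^\ast$ and every $\gamma>0$,
$$\limsup_{n\to\infty}\tfrac{1}{n}\log\E\bigl[e^{\gamma\lambda(\kappa(Y_n))}\bigr]<\infty.$$
This is precisely the moment condition required to apply Varadhan's integral lemma to the continuous functional $\phi(x)=\lambda(x)$ and the random variables $Z_n=\tfrac{1}{n}\kappa(Y_n)$, which already satisfy the full LDP with good rate function $I$. The conclusion is that the $\limsup$ defining $\Lambda(\lambda)$ is in fact a genuine limit and
$$\Lambda(\lambda)=\sup_{x\in\mathfrak{a}}\bigl(\lambda(x)-I(x)\bigr)=:I^\ast(\lambda).$$
Because $I$ is convex (Theorem \ref{kanitsav1}) and lower semicontinuous (as a good rate function), the Fenchel--Moreau biduality theorem applies and yields $I=I^{\ast\ast}=\Lambda^\ast$, as claimed.

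\textbf{Main obstacle.} The soft parts of the argument (the upgrade via exponential tightness, Varadhan's lemma, and Fenchel--Moreau) are standard once Theorem \ref{kanitsav1} is available. The only nontrivial input is the uniform control at scale $n$ of the exponential moments of $\|\kappa(Y_n)\|$, and this is exactly where the structure of reductive groups through the distinguished representations of Lemma \ref{rrep} and the norm identification of Lemma \ref{Cartan.norm.correspondance} is essential: they reduce the multi-dimensional, non-commutative problem to a finite collection of one-dimensional subadditive/additive cocycles to which the classical iid exponential moment estimates apply directly.
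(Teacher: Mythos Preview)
Your proposal is correct and follows essentially the same route as the paper: exponential tightness via Chebyshev plus subadditivity of the Cartan coordinates, then Varadhan's lemma followed by Fenchel--Moreau biduality. The only cosmetic difference is that the paper packages the coordinate-wise subadditivity into the single statement that the $\ell^{\infty}$-norm (in the dual basis of the $\overline{\chi}_{i}$) satisfies $\|\kappa(gh)\|\le\|\kappa(g)\|+\|\kappa(h)\|$, whereas you work one weight at a time and invoke the contragredient to handle the negative directions; these are equivalent formulations of the same estimate.
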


\begin{remark}
We observe in the previous theorems that if the support of the measure $\mu$ instead generates a semigroup which is Zariski dense in a compact or unipotent subgroup of $G$, then it is still true that the LDP holds with the rate function $I$ which takes the value $1$ on $0 \in \mathfrak{a}$ and $\infty$ elsewhere.
\end{remark}

\begin{remark}
For $g \in G$, denote by $\tau_{g}$ the automorphism conjugation by $g$ and denote by $\tau_{g}{}_{\ast}\mu$ the push-forward of a probability measure $\mu$ on $G$ by $\tau_{g}$. Denote also by $I_{\mu}$ the corresponding rate function of LDP given by Theorem \ref{kanitsav1}. Then, for every $g \in G$, we have $I_{\mu}=I_{\tau_{g}{}_{\ast}\mu}$. This also follows easily from Corollary \ref{Cartan.stability} using the definition of $I$ in Theorem \ref{existLDP}.
\end{remark}

In the rest of this section, we prove the existence of weak LDP statement of Theorem \ref{kanitsav1}. The convexity of the rate function and other assertions of Theorem \ref{kanitsav2} are proved in Section \ref{section4}.

\subsection{Restricting the random walk to Schottky families} The following first lemma relies on Theorem \ref{AMS} and the uniform continuity of Cartan projections (Corollary \ref{Cartan.stability}). It says that if at some step, the Cartan projection of the walk hits a certain region of the Weyl chamber with a certain probability, then after some bounded number of steps, it will hit proximal elements whose Cartan projection is close to that region, and this will happen with a probability that is proportionally not arbitrarily small:
\begin{lemma}\label{AMS.dispersion} Let $0<\epsilon<r=r(\Gamma)$. There exist a compact set $C=C(\Gamma,\epsilon) \subset \mathfrak{a}$, a natural number $i_{0}=i_{0}(\epsilon,\Gamma, \mu)$, and a constant $d_{1}=d_{1}(\epsilon,\Gamma,\mu)>0$ such that for all $n_{0} \in \mathbb{N}$ and $R \subset \mathfrak{a}^{+}$, there exists a natural number $n_{1} \geq n_{0}$ with $n_{1}-n_{0}\leq i_{0}$ such that we have
\begin{equation*}
\mathbb{P}( \kappa (Y_{n_{1}}) \in R + C \; \text{and} \; Y_{n_{1}} \; \text{is} \; (\theta_{\Gamma},r,\epsilon)\text{-proximal}) \geq d_{1}.\mathbb{P}(\kappa(Y_{n_{0}}) \in R)
\end{equation*} 
\end{lemma}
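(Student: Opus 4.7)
The plan is to combine (i) the neighborhood form of the Abels--Margulis--Soifer theorem (Theorem \ref{AMS} together with Remark \ref{AMS.remark}), phrased for left multiplication to match the random walk convention $Y_{n+1}=X_{n+1}Y_n$; (ii) the fact that $\mathrm{supp}(\mu)$ generates $\Gamma$ as a semigroup, so that each of the finitely many ``correcting'' elements produced by AMS is a short word in $\mathrm{supp}(\mu)$ that the random walk can approximate with positive probability by independence of increments; and (iii) the uniform continuity of the Cartan projection (Corollary \ref{Cartan.stability}) to control $\kappa(Y_{n_1})$ by $\kappa(Y_{n_0})$ up to a fixed compact perturbation.

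To execute this, I would first apply the (left-multiplication variant of) AMS to obtain a finite set $F=\{f_1,\ldots,f_k\}\subset \Gamma$, bounded open neighborhoods $V_{f_j}$ of each $f_j$, and, for every $\gamma\in G$, an open neighborhood $U_\gamma$ of $\gamma$ and an index $j(\gamma)$ such that $f'\gamma'$ is $(\theta_\Gamma,r,\epsilon)$-proximal whenever $\gamma'\in U_\gamma$ and $f'\in V_{f_{j(\gamma)}}$. Writing $f_j=s_{j,m_j}\cdots s_{j,1}$ with $s_{j,i}\in\mathrm{supp}(\mu)$ and shrinking open neighborhoods $W_{j,i}\ni s_{j,i}$ so that $W_{j,m_j}\cdots W_{j,1}\subseteq V_{f_j}$, I would set $p_j:=\prod_i\mu(W_{j,i})>0$ and $i_0:=\max_j m_j$. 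For given $n_0$ and $R$, I would partition $G$ by the open cover $\widetilde G_j:=\bigcup\{U_\gamma : j(\gamma)=j\}$ and invoke the pigeonhole principle to find $j^\ast$ with $\mathbb{P}(\kappa(Y_{n_0})\in R,\,Y_{n_0}\in\widetilde G_{j^\ast})\geq \tfrac{1}{k}\mathbb{P}(\kappa(Y_{n_0})\in R)$. Taking $n_1=n_0+m_{j^\ast}$, independence of $X_{n_0+1},\ldots,X_{n_1}$ from $\sigma(Y_{n_0})$ shows that the joint event additionally requiring $X_{n_0+i}\in W_{j^\ast,i}$ has probability at least $\tfrac{p_{j^\ast}}{k}\mathbb{P}(\kappa(Y_{n_0})\in R)$; on this event $Y_{n_1}=f'Y_{n_0}$ with $f'\in V_{f_{j^\ast}}$, so $Y_{n_1}$ is $(\theta_\Gamma,r,\epsilon)$-proximal. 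Finally, Corollary \ref{Cartan.stability} produces a compact $M_{j^\ast}\subset \mathfrak a$ with $\kappa(Y_{n_1})\in\kappa(Y_{n_0})+M_{j^\ast}$, and setting $C:=\bigcup_j M_j$ and $d_1:=\min_j p_j/k$ gives the stated inequality.

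The main obstacle I expect is a convention issue: Theorem \ref{AMS} is stated for right multiplication ($\gamma f$), whereas extending the left random walk multiplies $Y_{n_0}$ on the left. I would address this by observing that the AMS proof adapts verbatim to a left-multiplication variant; equivalently, since only the one-step law of $\mu$ enters, one may run the argument for the right walk $X_1\cdots X_n$, which has the same distribution at each step. A secondary point requiring care is the openness (hence measurability) of the pigeonhole classes $\widetilde G_{j}$, but this is exactly what the neighborhood refinement of Remark \ref{AMS.remark} delivers, and it is also what lets approximate extensions ($f'\in V_{f_j}$ rather than $f_j$ itself) still yield proximal products.
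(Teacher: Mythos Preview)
Your proposal is correct and follows essentially the same route as the paper (AMS finiteness, pigeonhole over $F$, independence of increments, uniform continuity of $\kappa$). For the left/right convention you flag, the paper simply places the correcting factor in the \emph{first} $k_{j_0}$ increments of $Y_{n_1}$ and uses that $X_{n_0+k_{j_0}}\cdots X_{k_{j_0}+1}$ has the same law as $Y_{n_0}$, so that $Y_{n_1}=g\cdot f'$ with $g\in\Gamma_{j_0}$, $f'\in V_{f_{j_0}}$, and the right-multiplication form of AMS applies directly --- this is precisely your ``right walk'' workaround, executed without switching walks.
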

\begin{proof}
Let $F=F(r,\epsilon)$ denote the finite subset of $\Gamma$ given by Theorem \ref{AMS} and $V_{f}$ denote the neighbourhoods in $G$ of elements $f$ of $F$ given by Remark \ref{AMS.remark}. Fix $i_{0} \in \mathbb{N}$ such that $F \subset \bigcup_{i=1}^{i_{0}} \text{supp}(\mu^{\ast i})$, this is indeed possible since $\text{supp}(\mu)$ generates $\Gamma \supset F$. Denote $F=\{f_{1},\ldots,f_{|F|}\}$ and using Remark \ref{AMS.remark}, define a covering of $\Gamma$ by the subsets $\Gamma_{i}:=\{g \in \Gamma \, | \, gf_{i}' \; is \; (\theta_{\Gamma},r,\epsilon)\text{-proximal for every}\, f_{i}' \in V_{f_{i}}\}$ for $i=1,\ldots, |F|$. Fix numbers $k_{1},\ldots,k_{|F|} \leq i_{0}$ such that $\mu^{\ast k_{i}}(V_{f_{i}})=:\alpha_{i}>0$, where this latter inequality is strict by definition of support of a probability measure, here $\mu^{\ast k_{i}}$'s. Then, since, $\Gamma_{i}$'s cover $\Gamma$, we have 
\begin{equation*}
\mathbb{P}(\kappa (Y_{n_{0}}) \in R)\leq \sum_{j=1}^{|F|} \mathbb{P}(Y_{n_{0}} \in \Gamma_{j} \cap \kappa ^{-1}(R))
\end{equation*}
so that there exists $j_{0} \in \{1,\ldots,|F|\}$ such that
\begin{equation*}
\mathbb{P}(Y_{n_{0}} \in \Gamma_{j_{0}} \cap \kappa^{-1}(R)) \geq \frac{\mathbb{P}(\kappa (Y_{n_{0}}) \in R)}{|F|}
\end{equation*}

Now, as $|F|$ is finite and $V_{f_{i}}$'s are bounded, the set $\cup_{i=1}^{|F|} \overline{V}_{f_{i}}$ is a compact set in $G$, and denote by $C$ the compact subset $M$ of $\mathfrak{a}$ given by Corollary \ref{Cartan.stability}, in which we take $L=\cup_{i=1}^{|F|} \overline{V}_{f_{i}}$ . Therefore, by this lemma, for every $g \in \Gamma$ such that $\kappa(g) \in R$ and for all $f' \in \cup_{i=1}^{|F|} \overline{V}_{f_{i}}$, we have $\kappa(gf') \in R+C$. Then, it follows by the independence of the random walk increments that 
\begin{equation*}
\begin{aligned}
& \mathbb{P}( \kappa(Y_{n_{0}+k_{j_{0}}}) \in R +C \; 
\text{and} \; Y_{n_{0}+k_{j_{0}}} \; \text{is} 
\; (\theta_{\Gamma},r,\epsilon)\text{-proximal}) \\ & \geq  \mathbb{P}(X_{n_{0}+k_{j_{0}}}. \ldots X_{k_{j_{0}}+1} \in \Gamma_{j_{0}} \cap \kappa^{-1}(R) \; \text{and} \; X_{k_{j_{0}}}.\ldots X_{1} \in V_{f_{j_{0}}}) \\
& = \mathbb{P}(Y_{n_{0}} \in \Gamma_{j_{0}} \cap \kappa^{-1}(R)).\mathbb{P}(Y_{k_{j_{0}}} \in V_{f_{j_{0}}}) \geq \frac{\mathbb{P}(\kappa (Y_{n_{0}}) \in R)}{|F|}.\alpha_{j_{0}} 
\end{aligned}
\end{equation*} 
Now, putting $n_{1}:=n_{0}+k_{j_{0}} \leq n_{0}+i_{0}$ and $\alpha_{0}:=\min_{k=1,\ldots,|F|} \alpha_{k}>0$, we have
\begin{equation*}
\mathbb{P}(\kappa(Y_{n_{1}}) \in R+C \: \text{and} \; Y_{n_{1}} \; \text{is} \; (\theta_{\Gamma},r,\epsilon)-\text{proximal}) \geq d_{1}\mathbb{P}(\kappa (Y_{n_{0}}) \in R)
\end{equation*} where we have put $d_{1}=\frac{\alpha_{0}}{|F|}=d_{1}(\epsilon,\mu,\Gamma)$.
\end{proof} 

The next lemma is an obvious observation on the relation between narrowness and $(\theta, r, \epsilon)$-Schottky properties of a set of proximal elements. It will prove to be useful in our considerations together with the lemma following it. In its proof and in what follows, recall that $\Pi$ stands for the set of simple roots $\alpha_{1},\ldots,\alpha_{d_{S}}$ of $G$ and for each $\alpha_{i} \in \Pi$, $(\rho_{i},V_{i})$ is the corresponding distinguished representation of $G$ (given by Lemma \ref{rrep}).

\begin{lemma}\label{daraltma}
Let $\epsilon$ and $r$ be two real numbers such that $0< 6\epsilon \leq r$ and let $\theta$ be a non-empty subset of $\Pi$. Then, an  $r$-narrow set $E$ of $(\theta, r,\epsilon)$-proximal elements in $G$ is a $(\theta,r_{1},\epsilon)$-Schottky family, where we can take $r_{1}=\frac{r}{6}$.
\end{lemma}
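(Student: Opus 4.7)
The plan is to verify the two clauses of Definition \ref{defnarepsSchottky2}.1 (the Schottky family condition) separately in each distinguished representation $(\rho_i, V_i)$ with $\alpha_i \in \theta$. Fix such an $i$ and write $E_i := \rho_i(E) \subseteq \GL(V_i)$; by hypothesis $E_i$ consists of $(r,\epsilon)$-proximal elements and is $r$-narrow in $\mathbb{P}(V_i)$. I need to show that $E_i$ is an $(r_1,\epsilon)$-Schottky family in the sense of Definition \ref{defnarepsSchottky1}.1, with $r_1 = r/6$.

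First I would check clause (a), namely that every $\gamma \in E_i$ is $(r_1,\epsilon)$-proximal. The hypothesis $6\epsilon \leq r$ gives $\epsilon \leq r_1$, so the relation $0<\epsilon \leq r_1$ required in the definition holds. The attracting/repelling contraction conditions $\gamma(B_\gamma^\epsilon)\subset b_\gamma^\epsilon$ and the $\epsilon$-Lipschitz property on $B_\gamma^\epsilon$ depend only on $\epsilon$, not on $r$, so they pass unchanged from the $(r,\epsilon)$-proximality. The only condition depending on $r$ is the separation $d(x_\gamma^+, X_\gamma^<) \geq 2r_1$, which is weaker than $d(x_\gamma^+, X_\gamma^<) \geq 2r$ and hence holds a fortiori.

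Next I would check clause (b), the Schottky separation condition $d(x_\gamma^+, X_{\gamma'}^<) \geq 6 r_1 = r$ for all $\gamma,\gamma' \in E_i$. Here the $r$-narrowness enters: it supplies a set $Y \subset \mathbb{P}(V_i)$ of diameter less than $r$ containing every attracting point $x_\gamma^+$ for $\gamma \in E_i$, so in particular
\[
d(x_\gamma^+, x_{\gamma'}^+) < r \qquad \text{for all } \gamma,\gamma'\in E_i.
\]
By the triangle inequality applied to any point of $X_{\gamma'}^<$,
\[
d(x_{\gamma'}^+, X_{\gamma'}^<) \leq d(x_{\gamma'}^+, x_\gamma^+) + d(x_\gamma^+, X_{\gamma'}^<),
\]
whence
\[
d(x_\gamma^+, X_{\gamma'}^<) \geq d(x_{\gamma'}^+, X_{\gamma'}^<) - d(x_{\gamma'}^+, x_\gamma^+) > 2r - r = r = 6r_1,
\]
using the $(r,\epsilon)$-proximality of $\gamma'$ for the first term. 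Since this argument works for each $\alpha_i \in \theta$ independently, it shows that $E$ is a $(\theta, r_1, \epsilon)$-Schottky family.

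There is no real obstacle here: the statement is a direct unpacking of the two definitions, and the Hausdorff-distance bound on the repelling hyperplanes from Definition \ref{defnarepsSchottky1}.2 is in fact not needed for the conclusion as stated (only the narrowness of the attracting points is used in the triangle-inequality step). The whole point of introducing the factor of $6$ in $r_1 = r/6$ is precisely to absorb the loss $2r - r = r$ in the separation estimate while still fitting the Schottky convention $d(x_\gamma^+, X_{\gamma'}^<) \geq 6 r_1$.
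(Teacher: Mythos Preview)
Your proof is correct and follows essentially the same route as the paper's: first observe that $(r,\epsilon)$-proximality trivially implies $(r_1,\epsilon)$-proximality when $r_1\leq r$ and $\epsilon\leq r_1$, then use the triangle inequality with $d(x_{\gamma'}^+,X_{\gamma'}^<)\geq 2r$ and $d(x_\gamma^+,x_{\gamma'}^+)<r$ to obtain the Schottky separation $d(x_\gamma^+,X_{\gamma'}^<)\geq r = 6r_1$. Your remark that the Hausdorff-distance clause of narrowness is not actually needed here is also accurate.
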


\begin{proof}
Observe first that, by definition, if $\gamma$ is $(\theta,r,\epsilon)$-proximal, then $\gamma$ is also $(\theta,r_{1},\epsilon_{1})$-proximal for all $r_{1} \leq r$ and $\epsilon_{1} \geq \epsilon$ such that $r_{1} \geq \epsilon_{1}$. 
Therefore, to prove the lemma, one just notes that for all $\gamma$, $\gamma' \in E$ and $\alpha_{i} \in \theta$,
since $d(x^{+}_{\rho_{i}(\gamma)},X^{<}_{\rho_{i}(\gamma)})\geq 2r $ and $d(x^{+}_{\rho_{i}(\gamma)},x^{+}_{\rho_{i}(\gamma')})<r$, we 
have $d(x^{+}_{\rho_{i}(\gamma)},X^{<}_{\rho_{i}(\gamma')})\geq 2r-r=r$. Hence putting $r_{1}=\frac{r}{6}$ we have by 
hypothesis, $r_{1}\geq \epsilon$ and $d(x^{+}_{\rho_{i}(\gamma)},X^{<}_{\rho_{i}(\gamma')})\geq 6r_{1}$ as in the definition of a  $(\theta,r_{1},\epsilon)$-Schotky family.
\end{proof}

We shall now proceed with the following lemma, which is a consequence of the compactness of projective spaces of $V_{i}$'s. We will put it to good use on two occasions; once, together with Lemma \ref{daraltma} to obtain a useful corollary, and once in the proof of convexity. 

\begin{lemma}\label{pdaraltma}
Let $r\geq \epsilon> 0$ and a positive constant $a$ be given. Let $\theta$ be a non-empty subset of $\Pi$. Then, there exists a strictly positive constant $d_{2}=d_{2}(a)$ such that for every subset $E$ of $G$ consisting of $(\theta,r,\epsilon)$-proximal elements, and for all $n \in \mathbb{N}$, there exists an $a$-narrow subset $E_{n}$ of $E$ such that, we have $\mathbb{P}(Y_{n} \in E_{n}) \geq d_{2} \mathbb{P}(Y_{n} \in E)$.
\end{lemma}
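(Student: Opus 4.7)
The plan is to produce a finite cover of the set of $(\theta,r,\epsilon)$-proximal elements of $G$ by $a$-narrow subsets, with the number of pieces depending only on $a$ (and the fixed data $G,\theta$), and then to apply pigeonhole at each time $n$. The crux is that $a$-narrowness is a purely geometric condition on attracting points and repulsive hyperplanes, and for each $\alpha_i \in \theta$ these objects live in compact spaces.

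For each $\alpha_i \in \theta$, compactness of $\mathbb{P}(V_i)$ lets me fix a finite open cover $U_{i,1},\ldots,U_{i,m_i}$ by balls of diameter strictly less than $a$. Likewise, the Grassmannian $\Gr_{\dim V_i -1}(V_i)$ of projective hyperplanes in $\mathbb{P}(V_i)$ is compact, and the map sending a hyperplane $H$ to $\mathbb{P}(H)\subset\mathbb{P}(V_i)$ equipped with the Hausdorff distance $d_H$ is continuous; in particular the image is compact in $d_H$ and can be covered by finitely many open sets $W_{i,1},\ldots,W_{i,n_i}$ of $d_H$-diameter less than $a$.

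For each multi-index $(\mathbf{j},\mathbf{k})=\bigl((j_i)_{\alpha_i\in\theta},(k_i)_{\alpha_i\in\theta}\bigr)$ with $1\leq j_i\leq m_i$ and $1\leq k_i\leq n_i$, set
\[
E_{\mathbf{j},\mathbf{k}} := \{\gamma \in E \,:\, x_{\rho_i(\gamma)}^{+}\in U_{i,j_i}\ \text{and}\ X_{\rho_i(\gamma)}^{<}\in W_{i,k_i}\ \text{for every}\ \alpha_i \in \theta\}.
\]
Each such $E_{\mathbf{j},\mathbf{k}}$ is $a$-narrow in the sense of Definition \ref{defnarepsSchottky2}: the attracting points $x_{\rho_i(\gamma)}^{+}$ for $\gamma\in E_{\mathbf{j},\mathbf{k}}$ all lie in the common set $U_{i,j_i}$ of diameter $<a$, and any two repulsive hyperplanes $X_{\rho_i(\gamma)}^{<}, X_{\rho_i(\gamma')}^{<}$ lie in a common $W_{i,k_i}$, hence $d_H(X_{\rho_i(\gamma)}^{<}, X_{\rho_i(\gamma')}^{<})<a$. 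The collection $\{E_{\mathbf{j},\mathbf{k}}\}$ covers $E$, and its cardinality $N := \prod_{\alpha_i\in\theta} m_i n_i$ depends only on $a$ (with $G,\theta$ fixed).

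Finally, for every $n\in\mathbb{N}$ the pigeonhole principle furnishes a multi-index $(\mathbf{j}_n,\mathbf{k}_n)$ with $\mathbb{P}(Y_n\in E_{\mathbf{j}_n,\mathbf{k}_n})\geq N^{-1}\mathbb{P}(Y_n\in E)$; taking $E_n := E_{\mathbf{j}_n,\mathbf{k}_n}$ and $d_2 := 1/N$ concludes the argument. I do not anticipate a substantial obstacle here; the only ingredient beyond formal pigeonhole is the compactness of the relevant Grassmannians in the Hausdorff distance, which is standard over any local field. The main conceptual point is simply that $a$-narrowness is witnessed by membership in balls of diameter $<a$, so a finite cover of the ambient compact spaces by such balls induces a finite cover of any $E$ by $a$-narrow subsets.
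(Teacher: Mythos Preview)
Your proof is correct and follows essentially the same approach as the paper: cover the attracting points and repulsive hyperplanes by finitely many pieces of small diameter using compactness, then apply pigeonhole. The only cosmetic difference is that the paper phrases the hyperplane cover by choosing finitely many reference hyperplanes $H^i_j$ and using their $a$-neighbourhoods in $\mathbb{P}(V_i)$, whereas you invoke compactness of the Grassmannian in the Hausdorff metric directly; your formulation is arguably cleaner since it matches the $d_H$ condition in the definition of $a$-narrowness without any intermediate step.
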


\begin{proof}
Indeed, for each $\alpha_{i} \in \theta$, by compactness of $\mathbb{P}(V_{i})$, we can choose a 
partition $Y^{i}_{1},\ldots,Y^{i}_{s_{i}}$ of $\mathbb{P}(V_{i})$ with diam$(Y^{i}_{j})<a$ and where $s_{i}=s_{i}(a)$. Similarly, we can find hyperplanes $H^{i}_{1}, \ldots, H^{i}_{t_{i}}$ in $V_{i}$ with $t_{i}=t_{i}(a)$, and with the property that - denoting by $Z^{i}_{j}$ the $a$-neighbourhood of $H^{i}_{j}$ in $\mathbb{P}(V_{i})$ - the projection $\mathbb{P}(H)$ of any given hyperplane $H$ of $V_{i}$ is contained in one of $Z^{i}_{j}$'s. Up to re-indexing $\alpha_{i}$'s, write $\theta=\{\alpha_{1},\ldots,\alpha_{c}\}$ for some integer $1 \leq c \leq d_{S}$. Let $\underline{i}$, $\underline{j}$ denote multi-indices of the form $\underline{i}=(i_{1}, \ldots, i_{c})$ and $\underline{j}=(j_{1}, \ldots, j_{c})$ where, for each $k=1, \ldots, c$, $i_{k} \in \{1,\ldots, s_{k}\}$ and $j_{k} \in \{1,\ldots,t_{k}\}$. Now, let  $E \subset \Gamma$ be given as in the statement and for multi-indices $\underline{i}$, $\underline{j}$, denote by $E^{\underline{j}}_{\underline{i}}$ the following subset of $E$:
\begin{equation*}
E^{\underline{j}}_{\underline{i}}:=\{\gamma \in E \, | \, x^{+}_{\rho_{k}(\gamma)} \in Y^{k}_{i_{k}} \; \text{and} \; X^{<}_{\rho_{k}(\gamma)} \subset Z^{k}_{j_{k}} \}
\end{equation*} 
By the choice of $Y^{i}_{j}$'s and $Z^{i}_{j}$'s, the family $E^{\underline{j}}_{\underline{i}}$ covers $E$ and we thus have for every $n \in \mathbb{N}$
\begin{equation*}
\mathbb{P}(Y_{n} \in E) \leq \sum_{\underline{i}, \underline{j}} \mathbb{P}(Y_{n} \in E^{\underline{j}}_{\underline{i}})
\end{equation*}
It follows that for every $n \in \mathbb{N}$, there exist at least two multi-indices $\underline{i}_{0}$ and $\underline{j}_{0}$
such that $\mathbb{P}(Y_{n} \in E^{\underline{j}_{0}}_{\underline{i}_{0}}) \geq \frac{\mathbb{P}(Y_{n} \in E)}{s_{1}\ldots s_{c}t_{1}\ldots t_{c}}$. 
Hence, putting $d_{2}=d_{2}(a)=\frac{1}{s_{1}\ldots s_{c}t_{1}\ldots t_{c}}$ and $E_{n}=E^{\underline{j}_{0}}_{\underline{i}_{0}}$, we have the result of the lemma. 
\end{proof}

\begin{corollary}\label{ppdaraltma}
Let $r$ and $\epsilon$ be two real numbers with $r\geq 6\epsilon>0$ and let $\theta \subseteq \Pi$. Then, there exists a constant $d_{3}=d_{3}(r)>0$ such that for every subset $E$ of $G$ consisting of $(\theta,r,\epsilon)$-proximal elements and for all $n \in \mathbb{N}$, there exists an $(\theta,r_{1},\epsilon)$-Schottky family $E_{n} \subset E$ with $r_{1} \geq \frac{r}{6} \geq \epsilon$ and such that $\mathbb{P}(Y_{n} \in E_{n}) \geq d_{3}.\mathbb{P}(Y_{n} \in E)$.
\end{corollary}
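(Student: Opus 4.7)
The plan is to directly combine the two preceding results, Lemma \ref{pdaraltma} and Lemma \ref{daraltma}, with the parameter choice $a = r$. The hypothesis $r \geq 6\epsilon > 0$ is exactly what is needed to feed Lemma \ref{daraltma}, so no further work on the proximality constants should be necessary.

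First I would invoke Lemma \ref{pdaraltma} with $a := r$ and applied to the given set $E$ of $(\theta,r,\epsilon)$-proximal elements. This produces, for each $n \in \mathbb{N}$, an $r$-narrow subset $E_n \subseteq E$ satisfying
\begin{equation*}
\mathbb{P}(Y_n \in E_n) \;\geq\; d_2(r)\,\mathbb{P}(Y_n \in E),
\end{equation*}
where $d_2(r)$ is the constant furnished by that lemma. Since $E_n \subseteq E$, every element of $E_n$ remains $(\theta,r,\epsilon)$-proximal, so $E_n$ is an $r$-narrow set of $(\theta,r,\epsilon)$-proximal elements.

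Next I would apply Lemma \ref{daraltma} to $E_n$. The hypothesis $6\epsilon \leq r$ of the present corollary is exactly the hypothesis $0 < 6\epsilon \leq r$ of that lemma, and the lemma then gives that $E_n$ is a $(\theta, r_1, \epsilon)$-Schottky family with $r_1 = r/6$, which satisfies $r_1 \geq \epsilon$ by the same hypothesis. Setting $d_3 := d_3(r) := d_2(r)$ then yields the announced conclusion.

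There is no real obstacle here: the statement is essentially a packaging of Lemmas \ref{pdaraltma} and \ref{daraltma}, and the only thing to verify is the compatibility of the parameters, which is exactly what the assumption $r \geq 6\epsilon$ guarantees. The constant $d_3$ depends only on $r$ (through $d_2(r)$), as claimed, since $a$ in Lemma \ref{pdaraltma} has been chosen equal to $r$.
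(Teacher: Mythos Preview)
Your argument is correct and matches the paper's proof exactly: choose $a=r$ in Lemma~\ref{pdaraltma} and then apply Lemma~\ref{daraltma}. The only omission is the trivial case $\theta=\emptyset$ (both lemmas you invoke assume $\theta$ non-empty), which the paper dispatches in one clause since every subset of $G$ is then vacuously a $(\theta,r_1,\epsilon)$-Schottky family and one may take $E_n=E$.
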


\begin{proof}
If $\theta=\emptyset$ the statement is trivial; if not, choose $a=r$ in Lemma \ref{pdaraltma} and apply Lemma \ref{daraltma}.
\end{proof}

\subsection{Cartan projections of powers of Schottky families} The next proposition says that the images in $\mathfrak{a}^{+}$ of the Cartan projections of the $n^{th}$-power of an $(\theta,r,\epsilon)$-Schottky family in $\Gamma$ is contained, up to compact perturbation, in the $n$-dilation of the images in $\mathfrak{a}^{+}$ of the Cartan projections of that family. It follows from Benoist's Lemma \ref{Benoist.bounded.walls}, Proposition \ref{loxodromy.implies.Cartan.close.to.Jordan} and Theorem \ref{Best}.

\begin{proposition} \label{SchottkyCartaniterate}
There exists a compact subset $K$ of $ \mathfrak{a}$, depending on $r$, $\epsilon$ and $\Gamma$, with the property that for every $(\theta_{\Gamma},r,\epsilon)$-Schottky family $E$ in $\Gamma$ and $n \in \mathbb{N}$, we have $\kappa(E^{n}) \subset n.(co(\kappa(E))+K)$, where $E^{n}:=\{\gamma_{1}.\ldots.\gamma_{n} \, | \, \gamma_{i} \in E\}$, $\kappa(E):=\{\kappa(\gamma) \: | \: \gamma \in E\}$, $\kappa(E)+K:=\{x+k \, | \, x\in \kappa(E), k\in K \}$ and $co(.)$ stands for the convex hull.
\end{proposition}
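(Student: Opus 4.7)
Fix $g = \gamma_{1}\cdots\gamma_{n} \in E^{n}$ with each $\gamma_{i} \in E$. The plan is to compare $\kappa(g)$ to the sum $\sum \kappa(\gamma_{i})$ by routing through the Jordan projection, using the three ingredients already proved: Benoist's Theorem \ref{Best} to control $\lambda(g)-\sum\lambda(\gamma_{i})$, and Proposition \ref{loxodromy.implies.Cartan.close.to.Jordan} (twice) to bound $\lambda-\kappa$ on proximal elements; the final observation is that $\frac{1}{n}\sum\kappa(\gamma_{i})$ is by definition a convex combination of elements of $\kappa(E)$.

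First I would check that Theorem \ref{Best} applies directly to the cyclic sequence $\gamma_{1},\ldots,\gamma_{n}$: by the Schottky condition (Definition \ref{defnarepsSchottky2} and Definition \ref{defnarepsSchottky1}), for every pair $\gamma,\gamma' \in E$ and every $\alpha_{i} \in \theta_{\Gamma}$ one has $d(x^{+}_{\rho_{i}(\gamma)}, X^{<}_{\rho_{i}(\gamma')}) \geq 6r$, so the hypothesis on consecutive (and cyclically consecutive) factors is satisfied regardless of repetitions. Theorem \ref{Best} then produces a compact set $N_{(r,\epsilon)} \subset \mathfrak{a}$, independent of $n$ and $E$, such that $g$ is $(\theta_{\Gamma}, 2r, 2\epsilon)$-proximal and
\[
\lambda(g) - \sum_{i=1}^{n} \lambda(\gamma_{i}) \in n \cdot N_{(r,\epsilon)}.
\]
Applying Proposition \ref{loxodromy.implies.Cartan.close.to.Jordan} to each $\gamma_{i}$ (which is $(\theta_{\Gamma},r,\epsilon)$-proximal) and to $g$ (which is $(\theta_{\Gamma},2r,2\epsilon)$-proximal) yields compact sets $M_{(r,\epsilon)}, M_{(2r,2\epsilon)} \subset \mathfrak{a}$ with $\lambda(\gamma_{i})-\kappa(\gamma_{i}) \in M_{(r,\epsilon)}$ and $\lambda(g)-\kappa(g) \in M_{(2r,2\epsilon)}$.

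Chaining these three inclusions,
\[
\kappa(g) - \sum_{i=1}^{n} \kappa(\gamma_{i}) \in n \cdot N_{(r,\epsilon)} + n \cdot M_{(r,\epsilon)} - M_{(2r,2\epsilon)}.
\]
Writing $\sum_{i=1}^{n}\kappa(\gamma_{i}) = n \cdot \bigl(\tfrac{1}{n}\sum_{i=1}^{n}\kappa(\gamma_{i})\bigr) \in n \cdot \operatorname{co}(\kappa(E))$, one gets
\[
\kappa(g) \in n \cdot \operatorname{co}(\kappa(E)) + n \cdot K_{0} - M_{(2r,2\epsilon)},
\qquad K_{0} := N_{(r,\epsilon)} + M_{(r,\epsilon)}.
\]

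The only subtlety is to absorb the single, $n$-free term $-M_{(2r,2\epsilon)}$ into the $n$-scaled part, since the statement demands the compact perturbation be of the form $nK$ for a single compact $K$ independent of $n$. This is the step to be careful with, but it is elementary: take
\[
K := K_{0} + \operatorname{conv}\bigl(\{0\} \cup (-M_{(2r,2\epsilon)})\bigr),
\]
which is a compact subset of $\mathfrak{a}$ depending only on $r,\epsilon,\Gamma$. For every $n \geq 1$ and every $m \in M_{(2r,2\epsilon)}$ one has $-\tfrac{m}{n} = \tfrac{n-1}{n}\cdot 0 + \tfrac{1}{n}\cdot(-m) \in \operatorname{conv}(\{0\}\cup(-M_{(2r,2\epsilon)}))$, so $K_{0} - \tfrac{1}{n}M_{(2r,2\epsilon)} \subset K$, equivalently $n K_{0} - M_{(2r,2\epsilon)} \subset nK$. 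Substituting this back gives $\kappa(g) \in n\cdot\operatorname{co}(\kappa(E)) + nK = n\cdot(\operatorname{co}(\kappa(E)) + K)$, which is the desired inclusion. The case $n=0$ is trivial provided $0 \in K$, which holds by construction.
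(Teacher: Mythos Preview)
Your proof is correct and follows essentially the same route as the paper: decompose $\kappa(g)-\sum\kappa(\gamma_{i})$ through the Jordan projection via Theorem~\ref{Best} and Proposition~\ref{loxodromy.implies.Cartan.close.to.Jordan}, then absorb the single $n$-free compact term into an $n$-scaled one. The only cosmetic differences are that the paper treats the case $\theta_{\Gamma}=\emptyset$ separately (via Lemma~\ref{Benoist.bounded.walls}), and that your ``$n\cdot M_{(r,\epsilon)}$'' should strictly be the $n$-fold Minkowski sum, which is contained in $n\cdot\operatorname{co}(M_{(r,\epsilon)})$ --- a harmless enlargement of the compact.
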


\begin{proof}
We first note that the statement is clear if $\theta_{\Gamma}=\emptyset$. Indeed, in this case, by Lemma \ref{Benoist.bounded.walls}, for each $i=1,\ldots,d_{S}$, $\overline{\alpha}_{i}(\kappa(\Gamma))$ is bounded. On the other hand, for all central weight $\chi \in X_{C}$ and $g,h \in G$, we have $\overline{\chi}(\kappa(gh))=\overline{\chi}(\kappa(g))+\overline{\chi}(\kappa(h))$ and the statement follows since $\overline{\Pi} \cup \overline{X}_{C}$ is a basis of $\mathfrak{a}^{\ast}$.

Now suppose that $\theta_{\Gamma} \neq \emptyset$ and let $g_{1},\ldots,g_{n} \in E$. It follows by definition of a $(\theta_{\Gamma},r,\epsilon)$-Schottky family and Theorem \ref{Best} that for every $n \geq 1$, the product $g_{1},\ldots,g_{n}$ is $(\theta_{\Gamma},2r,2\epsilon)$-proximal. Now, let $N=N_{(r,\epsilon)}$ be the compact subset of $\mathfrak{a}$ given by Theorem \ref{Best} and let $M=M_{(2r,2\epsilon)}$ be the compact subset of $\mathfrak{a}$ given by Proposition \ref{loxodromy.implies.Cartan.close.to.Jordan}. Rewrite the difference $\kappa(g_{1},\ldots,g_{n})-\sum_{i=1}^{n}\kappa(g_{i})$ as 
$$
(\kappa(g_{1},\ldots,g_{n})-\lambda(g_{1},\ldots,g_{n}))+(\lambda(g_{1},\ldots,g_{n})-\sum_{i=1}^{n}\lambda(g_{i}))+(\sum_{i=1}^{n}(\lambda(g_{i})-\kappa(g_{i})))
$$
In this expression, observe that the first term belongs to $M$ by Proposition \ref{loxodromy.implies.Cartan.close.to.Jordan} and the above remark, the second term belongs to $n.N$ by Theorem \ref{Best}, and the third term belongs to $n.co(M)$ by Proposition \ref{loxodromy.implies.Cartan.close.to.Jordan}. Now the statement of our proposition easily follows: denote by $M^{-1}$ the set $\{-x \, | \, x \in M\}$ and put $\tilde{M}=co(M^{-1} \cup M)$. Finally set $K=2. \tilde{M} +N$ and observe that by above, we have $\kappa(g_{1},\ldots,g_{n}) \in \sum_{i=1}^{n}\kappa(g_{i}) +n.K$ proving the statement.
\end{proof}

\subsection{Controlling deviations in bounded steps} For later convenient use, we single out the following topological notion and note two obvious facts about it in the following lemma.

\begin{definition}\label{super.strict.definition}
Let $X$ be a topological space and $O_{1} \subset O_{2}$ two open subsets of $X$. We say that $O_{1}$ is super-strictly contained in $O_{2}$ if $\overline{O}_{1} \subseteq O_{2}$.
\end{definition}

\begin{lemma}\label{Pvear}
1. Let $V$ be a finite dimensional real normed vector space and $O_{1}$ and $O_{2}$ two open bounded subsets of $V$, $O_{1}$ super-strictly contained in $O_{2}$. Then, for all bounded set $K \subset V $, there exists a constant $R(O_{1},O_{2},K) \in \mathbb{R}^{+}$ such that for all $Q \geq Q(O_{1},O_{2},K)$, we have $Q.O_{1}+K \subset Q.O_{2} $\\[3pt]
2. Let $O_{1}$ and $O_{2}$ be as above. Then, there exists a real number $q(O_{1},O_{2})<1$ such that for all $n_{1},n_{2} \in \mathbb{N}$ with $1 \geq \frac{n_{1}}{n_{2}} > q(O_{1},O_{2})$, we have $n_{1}O_{1} \subset n_{2}O_{2}$.
\end{lemma}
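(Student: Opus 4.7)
The key observation underlying both parts is the following: since $\overline{O_{1}}$ is a compact subset of the open set $O_{2}$, a routine compactness argument (cover $\overline{O_{1}}$ by balls contained in $O_{2}$ and take a finite subcover) yields some $\delta = \delta(O_{1},O_{2}) > 0$ such that the open $\delta$-neighborhood $N_{\delta}(\overline{O_{1}}) := \{x \in V \,:\, d(x, \overline{O_{1}}) < \delta\}$ is contained in $O_{2}$. Both statements will follow by choosing the relevant parameter large enough (resp.\ close enough to $1$) so that a certain perturbation stays within this $\delta$-buffer.

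\textbf{Plan for (1).} Since $K$ is bounded, pick $M > 0$ with $K \subseteq B(0, M)$. For any $Q > 0$ and any $x \in Q \cdot O_{1} + K$, write $x = Qy + k$ with $y \in O_{1}$ and $k \in K$, so that $x/Q = y + k/Q$. Since $\|k/Q\| \leq M/Q$, as soon as $M/Q < \delta$ we have $x/Q \in N_{\delta}(y) \subseteq N_{\delta}(\overline{O_{1}}) \subseteq O_{2}$, i.e.\ $x \in Q \cdot O_{2}$. Thus $Q(O_{1},O_{2},K) := M/\delta$ works.

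\textbf{Plan for (2).} Since $O_{1}$ is bounded, fix $R > 0$ with $O_{1} \subseteq B(0, R)$. Choose $q = q(O_{1},O_{2}) \in (0,1)$ so that $(1-q)R < \delta$. For any $n_{1} \leq n_{2}$ with $n_{1}/n_{2} > q$ and any $x \in n_{1} O_{1}$, write $x = n_{1} y$ with $y \in O_{1}$; then $x / n_{2} = (n_{1}/n_{2})\, y$, and
\begin{equation*}
\bigl\| (n_{1}/n_{2})\, y - y \bigr\| = (1 - n_{1}/n_{2}) \, \|y\| \leq (1-q) R < \delta,
\end{equation*}
so $x/n_{2} \in N_{\delta}(y) \subseteq N_{\delta}(\overline{O_{1}}) \subseteq O_{2}$, giving $x \in n_{2} O_{2}$.

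\textbf{Main difficulty.} There is essentially no difficulty; the only nontrivial step is extracting the uniform buffer $\delta$, which is a standard consequence of the compactness of $\overline{O_{1}}$ and the openness of $O_{2}$ in a finite-dimensional (hence locally compact) normed space. Everything else is a one-line triangle-inequality estimate.
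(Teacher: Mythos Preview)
Your proof is correct and follows essentially the same approach as the paper. The paper simply declares both statements obvious, remarks that $d(O_{1},O_{2}^{c})>0$ (which is exactly your buffer $\delta$), and records the constants $Q(O_{1},O_{2},K)=\frac{\operatorname{diam}(K)}{d(O_{1},O_{2}^{c})}$ and $q(O_{1},O_{2})=1-\frac{d(O_{1},O_{2}^{c})}{\sup_{x\in O_{1}}\|x\|}$; your argument is just the explicit unpacking of these two lines.
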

\begin{proof}
Both statements are obvious. Remark that the hypothesis implies that $d(O_{1},O_{2}^{c})>0$ and one can take $Q(O_{1},O_{2},K)$ and $1> q(O_{1},O_{2})$ any real numbers larger than respectively $\frac{diam(K)}{d(O_{1},O_{2}^{c})}$ and $1-\frac{d(O_{1},O_{2}^{c})}{\underset{x\in O_{1}}{\sup}||x||}$.
\end{proof}

We shall need one last lemma before proceeding to prove the theorem. It relies on the uniform continuity of the Cartan projections (Corollary \ref{Cartan.stability}) and says that if the averages of the Cartan projections of the random product hits a certain region of the Cartan subalgebra at periodic times, then it will hit any open neighbourhood of this region at any time with at least the same asymptotic exponential rate of probability:

\begin{lemma}\label{controlling.perturbations.uptoepsilon}
Let $O_{1}$ and $O_{2}$ be two open bounded convex subsets of $\mathfrak{a}^{+}$, $O_{1}$ super-strictly contained in $O_{2}$. Suppose that there exist $n_{0} \in \mathbb{N}$ and $\alpha \geq 0$ such that for all $k \geq 1$, we have $\mathbb{P}(\kappa(Y_{n_{0}k}) \in kn_{0}O_{1}) \geq e^{-n_{0}k\alpha}$. Then we have $\liminf_{n} \frac{1}{n} \log \mathbb{P}(\frac{1}{n} \kappa(Y_{n}) \in O_{2})\geq -\alpha$.
\end{lemma}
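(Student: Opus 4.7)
The plan is to reduce the general-time estimate to the subsequential information along times $n_0 k$ by a bounded bridging argument. Writing $n = n_0 k + r$ with $0 \leq r < n_0$, factor $Y_n = Z_n Y_{n_0 k}$ where $Z_n := X_n X_{n-1} \cdots X_{n_0 k + 1}$ is independent of $Y_{n_0 k}$ and has distribution $\mu^{\ast r}$. The Cartan projection is not additive, but by Corollary \ref{Cartan.stability} it is uniformly controlled under bounded multiplicative perturbations, so as long as $Z_n$ lies in a fixed compact set, $\kappa(Y_n)$ differs from $\kappa(Y_{n_0 k})$ by a uniformly bounded additive amount.

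To produce such a compact set without any moment assumption, I would invoke tightness: since $G$ is a Polish (locally compact, second countable) space, for each $0 \leq r < n_0$ there is a compact $L_r \subset G$ with $\mu^{\ast r}(L_r) \geq 1/2$ (take $L_0 = \{e\}$). Set $L := \bigcup_{r < n_0} L_r$ and let $M \subset \mathfrak{a}$ be the compact set supplied by Corollary \ref{Cartan.stability} applied to $L$. By independence and the hypothesis,
\begin{equation*}
\mathbb{P}\bigl( Z_n \in L \ \text{and}\ \kappa(Y_{n_0 k}) \in n_0 k\, O_1 \bigr) \;\geq\; \tfrac{1}{2}\, e^{-n_0 k \alpha},
\end{equation*}
and on this event $\kappa(Y_n) \in n_0 k\, O_1 + M$.

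It remains to absorb the additive perturbation $M$ and the multiplicative discrepancy between $n_0 k$ and $n$ into a dilation of $O_2$. Choose an intermediate open bounded convex $O_1'$ with $\overline{O_1} \subseteq O_1' \subseteq \overline{O_1'} \subseteq O_2$. Lemma \ref{Pvear}(1) applied to the pair $(O_1, O_1')$ gives $n_0 k\, O_1 + M \subseteq n_0 k\, O_1'$ for all $k$ large; Lemma \ref{Pvear}(2) applied to the pair $(O_1', O_2)$ gives $n_0 k\, O_1' \subseteq n\, O_2$ once $n_0 k / n$ is close enough to $1$. Both conditions hold uniformly for $n$ large, since $0 \leq n - n_0 k < n_0$ forces $n_0 k / n \to 1$. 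Chaining the two inclusions, $\mathbb{P}\bigl(\tfrac{1}{n}\kappa(Y_n) \in O_2\bigr) \geq \tfrac{1}{2}\, e^{-n_0 k \alpha}$ eventually, and taking $\tfrac{1}{n} \log$ followed by $\liminf_n$ yields the bound $-\alpha$, again using $n_0 k / n \to 1$.

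The only point requiring any care is the absence of a moment hypothesis on $\mu$: one cannot control $\kappa(Z_n)$ deterministically. However, there are only $n_0$ possible residues $r$, so a single tightness invocation per residue builds the uniform compact bridging set $L$, after which the geometry of dilations (Lemma \ref{Pvear}) and the uniform continuity of $\kappa$ (Corollary \ref{Cartan.stability}) handle the rest; this is the only genuinely technical step.
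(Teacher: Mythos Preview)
Your proof is correct and follows essentially the same approach as the paper's: bridge from time $n_0 k$ to general $n$ using a single compact set $L$ (built by tightness over the finitely many residues) together with Corollary~\ref{Cartan.stability}, then absorb the bounded additive error and the dilation mismatch via an intermediate open set and the two parts of Lemma~\ref{Pvear}. The only cosmetic difference is the order in which you invoke the two parts of Lemma~\ref{Pvear} (you absorb $M$ first, then dilate; the paper dilates first, then absorbs), which is immaterial.
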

 
\begin{proof}
For all $n \in \mathbb{N}$, let $k_{n} \in \mathbb{N}$ be defined by $n_{0}(k_{n}+1) > n \geq n_{0}k_{n}$. By $\sigma$-compactness, we can choose a compact subset $L_{n_{0}}$ of $G$ containing $e \in G$ and such that $\mu^{\ast i}(L_{n_{0}})\geq \frac{1}{2}$ for each $i=1,\ldots, n_{0}$. Let $M_{n_{0}}$ be the compact subset $M$ of $\mathfrak{a}$ given by Corollary \ref{Cartan.stability}, by taking in it $L=L_{n_{0}}$. 

 By definition of super-strict inclusion and the fact that the ambient space is a normed real vector space, we can pick $O_{12}$ such that each of the inclusions $O_{1} \subset O_{12} \subset O_{2}$ is super-strict. Now, let $Q_{n_{0}}:=Q(O_{12},O_{2},M_{n_{0}}) \in \mathbb{R}$ and $q:=q(O_{1},O_{12})<1$ where these last quantities are as defined in Lemma \ref{Pvear}. Then, for all $n \in \mathbb{N}$ such that $n \geq Q_{n_{0}}$ and $1-\frac{n_{0}}{n}>q$, we have the following sequence of inclusions of events:
\begin{equation*}
\{\kappa(Y_{n}) \in k_{n} n_{0}O_{1} +M_{n_{0}} \} \subset \{\kappa(Y_{n}) \in nO_{12}+M_{n_{0}} \} \subset \{\kappa(Y_{n}) \in nO_{2}\}
\end{equation*} where the first inclusion is by 2. and the second by 1. of Lemma \ref{Pvear}.

 As a result, by independence of random walk increments, for all $n \in \mathbb{N}$, we have
\begin{equation}\label{just1}
\begin{aligned}
& \mathbb{P}(\frac{1}{n}\kappa(Y_{n}) \in O_{2}) \geq 
\mathbb{P}(\kappa(Y_{k_{n}n_{0}+(n-k_{n}n_{0})}) \in k_{n}n_{0}O_{1} + M_{n_{0}}) \geq \\ & \mathbb{P}(\kappa(Y_{k_{n}n_{0}}) \in k_{n}n_{0}O_{1}). \mathbb{P}(Y_{n-k_{n}n_{0}} \in L_{n_{0}}) \geq e^{-n_{0}k_{n}\alpha}\frac{1}{2}
\end{aligned}
\end{equation} where the last inequality follows by hypothesis and the construction of $L_{n_{0}}$. Now, in (\ref{just1}), taking logarithm, dividing by $n$, and taking $n$ to infinity, we obtain the result of the lemma.
\end{proof}

\subsection{Proof of existence of weak LDP} 
We are now ready to prove the existence of weak LDP statement in Theorem \ref{kanitsav1} by using the following general fact:
\begin{theorem}[see Theorem 4.1.11 in \cite{Dembo.Zeitouni}] \label{existLDP}
Let $X$ be a topological space endowed with its Borel $\sigma$-algebra $\beta_{X}$, and $Z_{n}$ be a sequence of $X$-valued random variables. Denote by $\mu_{n}$ the distribution of $Z_{n}$. Let $\mathcal{A}$ be a base of open sets for the topology of $X$. For 
each $x \in X$, define:
$$
I_{li}(x):= \underset{\underset{x \in A}{A \in \mathcal{A}}} {\sup} - \underset{n \rightarrow \infty}{\liminf} \frac{1}{n}\log  \mu_{n}(A) \quad \text{and} \quad I_{ls}(x):= \underset{\underset{x \in A}{A \in \mathcal{A}}}{\sup} - \underset{n \rightarrow \infty}   {\limsup} \frac{1}{n}\log \mu_{n}(A) 
$$
Suppose that for all $x \in X$, we have $I_{li}(x)=I_{ls}(x)$. Then, the sequence $Z_{n}$ satisfies an LDP with rate 
function $I$ given by $I(x):=I_{li}(x)=I_{ls}(x)$.
\end{theorem}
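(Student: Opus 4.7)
The plan is to establish directly the two inequalities characterizing a (weak) LDP, treating the lower bound on open sets and the upper bound on compact sets separately. First observe that $I := I_{li} = I_{ls}$ takes values in $[0,\infty]$ and is automatically lower semicontinuous on $X$: if $x_k \to x$ and $x \in A \in \mathcal{A}$, then eventually $x_k \in A$, whence $\liminf_k I(x_k) \geq -\liminf_n \frac{1}{n}\log \mu_n(A)$, and taking the supremum over $A \in \mathcal{A}$ with $A \ni x$ yields $\liminf_k I(x_k) \geq I(x)$.

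For the lower bound, fix an open set $O \subseteq X$ and a point $x \in O$. Since $\mathcal{A}$ is a base for the topology of $X$, there exists $A \in \mathcal{A}$ with $x \in A \subseteq O$. Monotonicity of $\mu_n$ together with the definition of $I_{li}$ (as a supremum, so each individual term is dominated by it) give
\begin{equation*}
\liminf_{n \to \infty} \frac{1}{n}\log \mu_n(O) \;\geq\; \liminf_{n \to \infty} \frac{1}{n} \log \mu_n(A) \;\geq\; -I_{li}(x) \;=\; -I(x).
\end{equation*}
Taking the supremum over $x \in O$ then yields $\liminf_n \frac{1}{n}\log \mu_n(O) \geq -\inf_{x \in O} I(x)$, which is the lower bound for any open set.

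For the upper bound on a compact set $K \subseteq X$, fix any $\alpha < \inf_{x \in K} I(x)$ (the case $\inf_K I = 0$ being trivial). For each $x \in K$ one has $I_{ls}(x) > \alpha$, so by the definition of $I_{ls}$ there is $A_x \in \mathcal{A}$ with $x \in A_x$ and $\limsup_n \frac{1}{n} \log \mu_n(A_x) < -\alpha$. By compactness of $K$, finitely many such sets $A_{x_1}, \ldots, A_{x_N}$ cover $K$. A union bound gives $\mu_n(K) \leq \sum_{i=1}^N \mu_n(A_{x_i})$, and since $N$ is fixed,
\begin{equation*}
\limsup_{n \to \infty} \frac{1}{n} \log \mu_n(K) \;\leq\; \max_{1 \leq i \leq N} \limsup_{n \to \infty} \frac{1}{n} \log \mu_n(A_{x_i}) \;<\; -\alpha.
\end{equation*}
Letting $\alpha \uparrow \inf_K I$ completes the upper bound and hence the weak LDP.

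The only delicate point is the compactness of $K$ in the upper bound, and this is precisely why one obtains in general only a weak LDP: without compactness the finite subcover is unavailable, and the interchange of $\limsup$ with the union bound over an uncountable cover fails, so one cannot control the exponential rate at infinity. In the paper this weak form is exactly what is claimed in Theorem \ref{kanitsav1}, and the strengthening to a full LDP in Theorem \ref{kanitsav2} is achieved separately by an exponential tightness argument from the exponential moment hypothesis.
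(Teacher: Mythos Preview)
Your proof is correct and is essentially the standard argument for this result; the paper does not give its own proof but simply cites Theorem 4.1.11 of Dembo--Zeitouni. You rightly note that the conclusion is a \emph{weak} LDP (upper bound only on compacta), which is indeed what the paper uses, despite the theorem statement omitting the qualifier. One cosmetic point: your lower-semicontinuity argument is phrased with sequences, which is adequate here since the paper applies the theorem with $X=\mathfrak{a}$; in a general topological space one would instead observe directly that $\{I>c\}$ is a union of basic open sets.
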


\begin{remark}\label{weak.LDP.implies.remark} In a polish space $X$, the hypothesis of the previous theorem is actually equivalent to the existence of a weak LDP (see  \cite{Dembo.Zeitouni}).
\end{remark}

We note that below if $\theta_{\Gamma}=\emptyset$, the proof simplifies to a great extent and the main relevant part is at the end where we make use of Proposition \ref{SchottkyCartaniterate}.

\begin{proof}[Proof of Theorem \ref{kanitsav1}, (Existence of LDP)]\label{proof.of.existence}
For all $n \geq 1$, denote by $\mu_{n}$ the distribution of the random variable $\frac{1}{n}\kappa(Y_{n})$. It is a probability measure supported on the closed subset $\mathfrak{a}^{+}$ of the vector space $\mathfrak{a}$. To establish the weak LDP for this sequence of probability measures, we use Theorem \ref{existLDP} and argue by contradiction.
 
Let $I_{li}$ and $I_{ls}$ denote the functions on $\mathfrak{a}$, associated to the sequence $\mu_{n}$ as in Theorem \ref{existLDP}, where we take the norm-open balls in $\mathfrak{a}$ as a base of topology. Suppose now for a contradiction that there exists $x \in \mathfrak{a}$ such that $I_{li}(x)>I_{ls}(x)\geq 0$. We can suppose that $x$ is in the closed Weyl chamber $\mathfrak{a}^{+}$ since for all $n \in \mathbb{N}$, supp$(\mu_{n}) \subset \mathfrak{a}^{+}$.

By definitions of the functions $I_{li}$ and $I_{ls}$, this implies that there exists an open ball $O_{5} \subset \mathfrak{a}$ with $x\in O_{5}$ and such that 

\begin{equation}\label{just2}
-\liminf_{n \to \infty}\frac{1}{n}\log \mu_{n}(O_{5})> \underset{\underset{x \in O}{O \subset \mathfrak{a}}}{\sup} - \limsup_{n \to \infty} \frac{1}{n} \log \mu_{n}(O) +4\eta
\end{equation} for some $\eta>0$ small enough.

We then choose $x\in O_{1} \subset O_{2} \subset O_{3} \subset O_{4} \subset O_{5}$ open balls around $x$, where each inclusion is super-strict, such that (\ref{just2}) yields
\begin{equation*}
-\liminf_{n \to \infty} \frac{1}{n} \log \mu_{n} (O_{5}) > - \limsup_{n \to \infty} \frac{1}{n} \log \mu_{n}(O_{1}) +3\eta
\end{equation*}

Now, let $r=r(\Gamma)$ be given by Theorem \ref{AMS} and choose $\epsilon \leq \frac{r}{6}$. Let $d_{1}=d_{1}(r,\epsilon,\Gamma)$ and $i_{0}=i_{0}(\epsilon,\Gamma,\mu)$ be the constants given by Lemma \ref{AMS.dispersion}, $C=C(\Gamma,\epsilon)$ be the compact subset of $\mathfrak{a}$ also given by Lemma \ref{AMS.dispersion}, $d_{3}=d_{3}(r)$ be the constant given by Corollary \ref{ppdaraltma}, $K=K(r,\epsilon)$ be the compact subset of $\mathfrak{a}$ given by Proposition \ref{SchottkyCartaniterate}. Let us also fix a real number $Q \geq \max_{i<j} (Q(O_{i},O_{j},C) \vee Q(O_{i},O_{j},K))$ where these latter quantities are as defined in Lemma \ref{Pvear} and let $q:=q(O_{1},O_{5})$ where again this is defined as in Lemma \ref{Pvear}. Choose $n_{0} \in \mathbb{N}$ such that 
\begin{enumerate}
\item $-\frac{1}{n_{0}} \log \mu_{n_{0}}(O_{1}) +2\eta < -\liminf_{n \to \infty} \frac{1}{n} \log \mu_{n} (O_{5})$ \label{just3.1}
\item $e^{-n_{0}\eta} \leq d_{1}d_{3}$ \label{just3.2}
\item $n_{0} \geq Q$ \label{just3.3}
\item $\frac{n_{0}}{n_{0}+i_{0}}>q$ \label{just3.4}
\end{enumerate}

Put $\alpha:=-\frac{1}{n_{0}} \log \mu_{n_{0}}(O_{1})$ and $\beta:=-\underset{n \rightarrow \infty}{\liminf} \frac{1}{n} \log \mu_{n}(O_{5})$ so that by item $(i)$ in the choice of $n_{0}$, 
\begin{equation}\label{just3}
\alpha +2\eta < \beta
\end{equation}

Setting $R=n_{0}O_{1}$ in Lemma \ref{AMS.dispersion}, we obtain that for some $n_{1}$ such that $n_{1}-n_{0} \leq i_{0}$ 
\begin{equation}\label{just4}
\mathbb{P}(\kappa(Y_{n_{1}})\in n_{0}O_{1}+C \; \text{and} \; Y_{n_{1}} \; \text{is} \; (\theta_{\Gamma},r,\epsilon)\text{-proximal})\geq e^{-n_{1}\alpha}.d_{1}
\end{equation}
The choice of $n_{0}$ (respectively items $(iii)$ and $(iv)$ above) implies by Lemma \ref{Pvear} that $n_{0}O_{1} +C \subset n_{0}O_{2}$ and $n_{0}O_{2} \subset n_{1}O_{3}$ so that (\ref{just4}) becomes 
\begin{equation}\label{just5}
\mathbb{P}(\kappa(Y_{n_{1}})\in n_{1}O_{3} \; \text{and} \; Y_{n_{1}} \; \text{is} \; (\theta_{\Gamma},r,\epsilon)\text{-proximal})\geq e^{-n_{1}\alpha}.d_{1}
\end{equation}

Applying Corollary \ref{ppdaraltma} by taking $L=\kappa^{-1}(n_{1}O_{3}) \cap \Gamma_{(r,\epsilon)}$, which is non-empty by (\ref{just5}), and where $\Gamma_{(r,\epsilon)}$ is the set of $(\theta_{\Gamma},r,\epsilon)$-proximal elements in $\Gamma$, using also (\ref{just5}), we obtain that
there exists an $(\theta_{\Gamma},r_{1},\epsilon)$-Schottky family 
$E \subset L \subset \Gamma$ such that we have 
\begin{equation*}
\mathbb{P}(\kappa(Y_{n_{1}}) \in n_{1} O_{3} \; \text{and} \; Y_{n_{1}} \in E) \geq e^{-n_{1}\alpha}d_{1}d_{3} \geq e^{-n_{1}(\alpha +\eta)}
\end{equation*} where the last inequality follows by item $(ii)$ of the choice of $n_{0}$ and since $n_{1}\geq n_{0}$.

Next, observe that by the construction of $L$ and since $E \subset L$, we have $\kappa(E) \subset n_{1}O_{3}$ and therefore, as $O_{3}$ is convex, $co(\kappa(E)) \subset n_{1}O_{3}$. Then, by Proposition \ref{SchottkyCartaniterate}, we obtain that for each $k \geq 1$, $\kappa(E^{k})\subset k.(co(\kappa(E))+K) \subset k.(n_{1}O_{3}+K) \subset k{n_{1}}O_{4}$ where the last inclusion follows also from item $(iii)$ of the choice of $n_{0}$ and since $n_{1} \geq n_{0}$.

 Finally, for all $k\geq 1$, by the independence of the random walk increments, we have that $\mathbb{P}(Y_{n_{1}k} \in E^{k}) \geq \mathbb{P}(Y_{n_{1}} \in E)^{k}$ and thus we obtain 
\begin{equation*}
\mathbb{P}(\kappa(Y_{n_{1}k}) \in kn_{1}O_{4})\geq \mathbb{P}(Y_{n_{1}k} \in E^{k}) \geq \mathbb{P}(Y_{n_{1}} \in E)^{k} \geq e^{-n_{1}k(\alpha+\eta)}
\end{equation*}
Therefore, Lemma \ref{controlling.perturbations.uptoepsilon} establishes that $ \beta=-\liminf \frac{1}{n} \log \mathbb{P}(\kappa(Y_{n}) \in O_{5}) \leq \alpha +\eta$
which together with (\ref{just3}) yields $\alpha +2\eta < \beta \leq \alpha +\eta$, a contradiction.
\end{proof}

\section{Convexity of the rate function}\label{section4}

\subsection{A dispersion lemma}
Our first lemma in this section is a key dispersion result which is in fact a corollary of the proof of Theorem \ref{AMS} in Abels-Margulis-Soifer's \cite{AMS}. Namely, it says that, by the Zariski density of $\Gamma$ in $G$ and connectedness of $G$, one can find finite sets in $\Gamma$ such that for each point of the projective spaces of the distinguished representation spaces $V_{i}$'s, some elements of these finite sets of $\Gamma$ will, by their action, disperse that point in the projective spaces. It will be useful on several occasions, particularly by its relation to the 1. (b) of Definition \ref{defnarepsSchottky1}.

\begin{lemma}[Dispersion lemma]\label{eta.dispersion}
For all $t \in \mathbb{N}$, there exist a strictly positive constant $\eta_{t}=\eta(t,\Gamma)$, depending only on $t$ and 
$\Gamma$, and a finite set $M_{t} \subset \Gamma$ with the following properties: for every $\bar{x}=(x_{1},\ldots,x_{d_{S}})\in \prod_{i=1}^{d_{S}} \mathbb{P}(V_{i})$, where $V_{i}$'s are the distinguished representation spaces of $G$, there exist $\gamma_{1},\ldots,\gamma_{t} \in M_{t}$ such that  
\begin{enumerate}
\item For each $i=1,\ldots,d_{S}$ and for all $j \ne k \in \{1,\ldots,t\}$,  
\begin{equation*}
d_{i}(\rho_{i}(\gamma_{j}).B_{i}(x_{i},\eta_{t}),\rho_{i}(\gamma_{k}).B_{i}(x_{i},\eta_{t}))>\eta_{t}
\end{equation*}
\item For all $i=1,\ldots,d_{S}$ and for every subset $\{\gamma_{i_{1}},\ldots,\gamma_{i_{k}}\}$ of $\{\gamma_{1},\ldots,\gamma_{t}\}$ of cardinality less than $k \leq \dim V_{i}$, for all $y^{1}_{i},\ldots, y^{k}_{i}, z_{i} \in B_{i}(x_{i},\eta_{t})$, denoting by $<\rho_{i}(\gamma_{i_{1}})y^{1}_{i},\ldots, \rho_{i}(\gamma_{i_{k}})y^{k}_{i}>$ 
the projective image of the subspace generated by these lines, and for all $j \notin \{i_{1},\ldots,i_{k}\}$, we have,  
\begin{equation*}
d_{i}(< \rho_{i}(\gamma_{i_{1}})y^{1}_{i},\ldots, \rho_{i}(\gamma_{i_{k}})y^{k}_{i}> \, ,\,\rho_{i}(\gamma_{j})z_{i})>\eta_{t}
\end{equation*} 
\end{enumerate}
\end{lemma}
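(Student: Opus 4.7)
The plan is to first establish the statement pointwise in $\bar{x}$, producing a tuple $(\gamma_1,\ldots,\gamma_t)\in\Gamma^t$ that satisfies strict versions of (1) and (2) at $\bar{x}$ with all perturbations equal to $x_i$; this reduces to a pure general-position condition on the family $\{\rho_i(\gamma_j)x_i\}_{j=1}^t$ in each $V_i$. Then continuity of the relevant projective distances and compactness of $\prod_{i=1}^{d_S}\mathbb{P}(V_i)$ will yield a single finite set $M_t\subset\Gamma$ and a uniform constant $\eta_t>0$.

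\textbf{Pointwise existence.} For a fixed $\bar{x}$ define
\[
\Omega_{\bar{x}}:=\{(g_1,\ldots,g_t)\in G^t:\ \{\rho_i(g_j)x_i\}_{j\in S}\text{ is linearly independent for all }i\text{ and all }S\subseteq\{1,\ldots,t\}\text{ with }|S|\leq\dim V_i\},
\]
a Zariski open subset of $G^t$. I claim it is non-empty: build $g_1,\ldots,g_t$ inductively. Given $g_1,\ldots,g_m$ in general position at $\bar{x}$ in the above sense, the requirement on $g_{m+1}$ is that $\rho_i(g_{m+1})x_i$ avoid, for each $i$ and each $S\subseteq\{1,\ldots,m\}$ of size $\leq\dim V_i-1$, the proper subspace $\mathrm{span}\{\rho_i(g_j)x_i:j\in S\}$ of $V_i$. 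For each such pair $(i,S)$, the corresponding locus $\{g\in G:\rho_i(g)x_i\in\mathrm{span}\{\rho_i(g_j)x_i:j\in S\}\}$ is Zariski closed in $G$, and proper because irreducibility of the distinguished representation $\rho_i$ (Lemma \ref{rrep}) forces the $G$-orbit of any nonzero vector of $V_i$ to linearly span $V_i$---otherwise its span would be a nontrivial proper $G$-invariant subspace. A finite union of proper Zariski closed subvarieties of the irreducible variety $G$ is a proper subvariety, so its complement is a non-empty Zariski open set, providing $g_{m+1}$. Since $\Gamma$ is Zariski dense in $G$, the product $\Gamma^t$ is Zariski dense in $G^t$, whence $\Omega_{\bar{x}}\cap\Gamma^t\neq\emptyset$ and we can pick a representative $(\gamma_1(\bar{x}),\ldots,\gamma_t(\bar{x}))$.

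\textbf{From pointwise to uniform.} The quantities in (1) and (2) of the lemma---the distance between two projective balls and the distance between a point and the span of several others in the various $\mathbb{P}(V_i)$---depend continuously on $(\bar{x};y_i^j,z_i;\gamma_1,\ldots,\gamma_t)$, and at the chosen tuple with $y_i^j=z_i=x_i$ they are strictly positive by the pointwise step. Consequently, each $\bar{x}$ admits an open neighbourhood $U_{\bar{x}}\subset\prod_i\mathbb{P}(V_i)$ and an $\eta(\bar{x})>0$ such that (1) and (2) hold with constant $\eta(\bar{x})$ for every reference point in $U_{\bar{x}}$, using the same tuple $(\gamma_1(\bar{x}),\ldots,\gamma_t(\bar{x}))$. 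Covering the compact space $\prod_i\mathbb{P}(V_i)$ by finitely many $U_{\bar{x}^{(s)}}$, $s=1,\ldots,N$, I set
\[
M_t:=\bigcup_{s=1}^N\{\gamma_1(\bar{x}^{(s)}),\ldots,\gamma_t(\bar{x}^{(s)})\}\subset\Gamma,\qquad \eta_t:=\min_{1\leq s\leq N}\eta(\bar{x}^{(s)})>0.
\]
For any $\bar{x}$, picking $s$ with $\bar{x}\in U_{\bar{x}^{(s)}}$, the tuple $\gamma_1(\bar{x}^{(s)}),\ldots,\gamma_t(\bar{x}^{(s)})\in M_t$ witnesses both properties.

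\textbf{Main difficulty.} The core of the argument is the non-emptiness of $\Omega_{\bar{x}}$; this is precisely where irreducibility of each distinguished representation, and then Zariski density of $\Gamma$ in $G$, enter essentially. The subsequent passage to uniform constants is a routine continuity-and-compactness argument.
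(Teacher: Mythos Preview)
Your proof is correct and follows essentially the same approach as the paper's: an inductive Zariski-openness argument using irreducibility of the distinguished representations to produce, for each $\bar{x}$, a tuple $(\gamma_1,\ldots,\gamma_t)\in\Gamma^t$ in general position, followed by a continuity-and-compactness passage to uniform constants. The only cosmetic difference is that you first establish non-emptiness of $\Omega_{\bar{x}}$ in $G^t$ and then invoke Zariski density of $\Gamma^t$ in $G^t$, whereas the paper selects each $\gamma_{k+1}$ directly from $\Gamma$ at every inductive step.
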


\begin{proof}
We start by inductively finding elements $\gamma^{\bar{x}}_{1},\ldots,\gamma^{\bar{x}}_{t} \in \Gamma$ for each element $\bar{x}=(x_{1},\ldots,x_{d_{S}})$ of $\prod_{i=1}^{d_{S}} \mathbb{P}(V_{i})$: choose $\gamma_{1}^{\bar{x}} \in \Gamma$ arbitrarily. Having constructed $\gamma_{1}^{\bar{x}}, \ldots \gamma_{k}^{\bar{x}}$ for some $k < t$, put  
\begin{equation*}
\begin{aligned}
& G_{i,k+1}:=\{\gamma \in G \, | \, \rho_{i}(\gamma).x_{i} \; \text{does not belong to the proper subspaces of $V_{i}$} \\ & \text{generated by the lines $\rho_{i}(\gamma_{j}).x_{i}$ for $j \in \{1,\ldots,k\}$}\}
\end{aligned}
\end{equation*}

Since there are finitely many such proper spaces of $V_{i}$, and the condition of not belonging to a proper subspace is a Zariski open condition in $G$, $G_{i,k+1}$ is a finite intersection of Zariski open sets which are also non-empty since the distinguished representations, $\rho_{i}$'s are irreducible. Consequently, $G_{i,k+1}$ is a non-empty Zariski open set in $G$. Similarly, the set $G_{k+1}$ defined by $G_{k+1}:= \cap_{i=1}^{d_{S}} G_{i,k+1}$ is Zariski open. $\Gamma$ being, by assumption, Zariski dense in $G$, the intersection $G_{k+1} \cap \Gamma$ is non-empty; choose one element $\gamma_{k+1}^{\bar{x}} \in G_{k+1} \cap \Gamma$. 

By induction, we then have constructed $\gamma_{1}^{\bar{x}},\ldots, \gamma_{t}^{\bar{x}} \in \Gamma$ for each $\bar{x} \in \prod \mathbb{P}(V_{i})$ such that for each $i=1,\ldots,d_{S}$, the elements of $\{\rho_{i}(\gamma_{1}^{\bar{x}}).x_{i},\ldots,\rho_{i}(\gamma_{t}^{\bar{x}}).x_{i}\}$ are in general position. Now choose $\eta_{t}^{\bar{x}}>0$, such that \begin{equation*}
d_{i}(<\rho_{i}(\gamma_{i_{1}}^{\bar{x}}).x_{i},\ldots,\rho_{i}(\gamma_{i_{k}}^{\bar{x}}).x_{i}>,\rho_{i}(\gamma_{j}^{\bar{x}}).x_{i})>2 \eta_{t}^{\bar{x}}
\end{equation*} for all $i=1,\ldots, d_{S}$, $k \leq \dim V_{i}-1$, $i_{1},\ldots,i_{k} \in \{1,\ldots,t\}$ and $j\notin \{i_{1},\ldots,i_{k}\}$. Such an $\eta_{t}^{\bar{x}}>0$ indeed exists by our construction of the $\gamma_{i}^{\bar{x}}$'s.

Now, by continuity of the action of $G$ on $\mathbb{P}(V_{i})$'s, for all $\bar{x}=(x_{1},\ldots,x_{d_{S}}) \in \prod \mathbb{P}(V_{i})$, there exists a neighbourhood $W^{\bar{x}}=W^{\bar{x}}_{x_{1}} \times \ldots \times W^{\bar{x}}_{x_{d_{S}}} \subset \prod \mathbb{P}(V_{i})$ such that for all $i=1,\ldots,d_{S}$, for all $k \leq \dim V_{i}-1$, and for all $(y^{i}_{1},\ldots,y^{i}_{k}) \in W^{\bar{x}}_{i}$, $z^{i} \in W^{\bar{x}}_{i}$ and $\gamma_{i}$'s as above; we have \begin{equation}\label{just6}
d_{i}(<\rho_{i}(\gamma_{i_{1}}^{\bar{x}}).y^{i}_{1},\ldots,\rho_{i}(\gamma_{i_{k}}^{\bar{x}}).y^{i}_{k}>,\rho_{i}(\gamma_{j}^{\bar{x}}).z^{i})> \eta_{t}^{\bar{x}}
\end{equation}

Up to reducing $\eta_{t}^{\bar{x}}$, we can suppose that for each $i=1,\ldots,d_{S}$; $B_{i}(x_{i},2\eta_{t}^{\bar{x}}) \subset W_{i}$.  Now, cover the compact set $\prod \mathbb{P}(V_{i})$ by the open sets $\underset{\bar{x} \in \prod \mathbb{P}(V_{i})}{\bigcup}\prod_{i=1}^{d_{S}} B_{i}(x_{i},\eta_{t}^{\bar{x}})$ and extract a finite subcover. Let 
us call the elements $\bar{x}^{1},\ldots,\bar{x}^{n} \in \prod \mathbb{P}(V_{i})$ such that 
$(\prod_{i=1}^{d_{S}} B_{i}(x_{i}^{j},\eta_{t}^{\bar{x}^{j}}))_{j=1,\ldots,n}$ is the extracted 
finite subcover, and put $\eta_{t}:= \min_{j=1,\ldots,n} \eta_{t}^{\bar{x}^{j}}$
 and $M_{t}:=\bigcup_{j=1}^{n}\{\gamma_{1}^{\bar{x}^{j}},\ldots,\gamma_{t}^{\bar{x}^{j}}\}$. 
 
  Then, the result of the lemma readily follows: as in the assertion of the lemma, let $\bar{x}=(x_{1},\ldots,x_{d_{S}}) \in \prod \mathbb{P}(V_{i})$. Let also, up to re-indexing, $\bar{x}^{1}$ be such that for each $i=1,\ldots,d_{S}$; $d_{i}(x_{i},x_{i}^{1})< \eta_{t}^{\bar{x}^{1}}$ and take $\gamma_{1}^{\bar{x}^{1}},\ldots,\gamma_{t}^{\bar{x}^{1}} \in M_{t}$. Then, 
\begin{enumerate}
\item To see the first statement, fix $i \in \{1,\ldots,d_{S}\}$ and $j \ne k \in \{1,\ldots,t\}$, and consider $y_{i},z_{i} \in B_{i}(x_{i},\eta_{t})$. 
 Since $d_{i}(x_{i},x_{i}^{1})<\eta_{t}^{\bar{x}^{1}}$, $\eta_{t} \leq \eta_{t}^{\bar{x}^{1}}$ and 
 $B_{i}(x_{i}^{1},2\eta_{t}^{\bar{x}^{1}}) \subset W_{i}^{\bar{x}^{1}}$,
 we have $B_{i}(x_{i},\eta_{t}) \subset B_{i}(x_{i}^{1},2\eta_{t}^{\bar{x}^{1}}) \subset W_{i}^{\bar{x}^{1}}$, so 
  that by (\ref{just6}) $d_{i}(\rho_{i}(\gamma_{j}).y_{i},\rho_{i}(\gamma_{k}).z_{i})>\eta_{t}^{\bar{x}^{1}}\geq \eta_{t}$, establishing the claim.
  \item The proof of the second statement is similar. Fix $i \in \{1,\ldots,d_{S}\}$ and $i_{1},\ldots,i_{k},j \in \{1,\ldots,t\}$ with $j \notin \{i_{1},\ldots,i_{k}\}$ and set $k=\dim V_{i}-1$. For all $y_{i_{1}},\ldots,y_{i_{k}},z_{i} \in B_{i}(x_{i},\eta_{t})$, exactly as above, we have $y_{i_{1}},\ldots,y_{i_{k}},z_{i} \in B_{i}(x_{i},\eta_{t}) \in W_{i}^{\bar{x}^{1}}$ so that (\ref{just6}) again proves the claim.
\end{enumerate}
\end{proof}

\begin{remark}\label{dispersion.remark}
A similar observation as Remark \ref{AMS.remark} of the Abels-Margulis-Soifer finiteness result, clearly applies to this finiteness result as well. Namely, for all $t \in \mathbb{N}$, there exists a constant $\eta_{t} \in \Gamma$, a finite subset $M_{t}$ of $\Gamma$ and for each $\gamma \in M_{t}$, bounded neighbourhoods $V_{\gamma}$ of $\gamma$ in $G$ such that we have the conclusions of the lemma for every $\gamma_{i}' \in V_{\gamma_{i}}$, instead of only $\gamma_{i}$'s for $i=1,\ldots, d_{S}$. We shall use the \textbf{same constants} $\eta_{t}$ for this extended result and Lemma \ref{eta.dispersion}.
\end{remark}

\subsection{Dealing with two Schottky families}

\begin{lemma}\label{Lipschitz.action}
Let $V$ be a finite dimensional $\rm k$-vector space and $g \in GL(V)$. For the action of $GL(V)$ on $\mathbb{P}(V)$ (endowed with the Fubini-Study metric), $g$ is a $||\Lambda^{2}g||.||g^{-1}||^{2}$-Lipschitz transformation. 
\end{lemma}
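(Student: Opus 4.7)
The plan is to unpack the definition of the Fubini--Study metric and apply two standard operator norm inequalities.

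First, I would take $x,y \in \mathbb{P}(V)$ with representatives $v_x, v_y \in V$, so that $g \cdot x, g \cdot y$ are represented by $gv_x, gv_y$. By the very definition of the Fubini--Study metric recalled in the paper,
\begin{equation*}
d(gx, gy) \;=\; \frac{\|gv_x \wedge gv_y\|}{\|gv_x\|\cdot\|gv_y\|} \;=\; \frac{\|(\Lambda^{2} g)(v_x \wedge v_y)\|}{\|gv_x\|\cdot\|gv_y\|}.
\end{equation*}

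Next, I would bound the numerator and denominator separately. For the numerator, submultiplicativity of the operator norm gives $\|(\Lambda^{2} g)(v_x \wedge v_y)\| \leq \|\Lambda^{2} g\| \cdot \|v_x \wedge v_y\|$. For the denominator, the identity $v_x = g^{-1}(gv_x)$ yields $\|v_x\| \leq \|g^{-1}\|\cdot\|gv_x\|$, so $\|gv_x\| \geq \|g^{-1}\|^{-1}\|v_x\|$, and similarly $\|gv_y\| \geq \|g^{-1}\|^{-1}\|v_y\|$. Combining,
\begin{equation*}
d(gx, gy) \;\leq\; \|\Lambda^{2} g\|\cdot\|g^{-1}\|^{2} \cdot \frac{\|v_x \wedge v_y\|}{\|v_x\|\cdot\|v_y\|} \;=\; \|\Lambda^{2} g\|\cdot\|g^{-1}\|^{2}\, d(x,y),
\end{equation*}
which is the desired Lipschitz bound.

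There is no real obstacle here: the statement is essentially a bookkeeping exercise, valid in both the Archimedean and non-Archimedean settings, since the only facts used are submultiplicativity of $\|\cdot\|$ with respect to the tensor/exterior product construction and the trivial norm inequality for $g^{-1}$. The only point to watch is that in the non-Archimedean case the norm on $\Lambda^{2}V$ is the ultrametric sup-norm associated to the basis $\{e_i \wedge e_j\}$, for which submultiplicativity $\|(\Lambda^{2} g)w\| \leq \|\Lambda^{2} g\|\|w\|$ still holds by definition of the operator norm, so the same computation goes through verbatim.
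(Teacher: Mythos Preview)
Your proof is correct and follows exactly the same approach as the paper's: both unfold the Fubini--Study metric, bound the numerator by $\|\Lambda^{2}g\|\cdot\|v_x\wedge v_y\|$ and the denominator below by $\|g^{-1}\|^{-2}\|v_x\|\|v_y\|$, and conclude. The paper's version is simply more terse, compressing everything into a single displayed chain of (in)equalities.
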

\begin{proof} Indeed, for $x,y \in \mathbb{P}(V)$, we have
\begin{equation*}
d(gx,gy)=\frac{||gx \wedge gy||}{||gx||.||gy||} \leq \frac{||\Lambda^{2}g||.||x \wedge y||}{||g^{-1}||^{-2}.||x||.||y||}=||\Lambda^{2}g||.||g^{-1}||^{2}d(x,y)
\end{equation*}
\end{proof}

Accordingly, for an element $\gamma \in G $, put 
\begin{equation}\label{just19}
L(\gamma):= \underset{i=1, \ldots, d}{\max} ||\Lambda^{2}\rho_{i}(\gamma)||.||\rho_{i}(\gamma)^{-1}||^{2} \in [1, \infty[ .
\end{equation}

The next technical lemma is based on the observation that if a proximal element $g$, when multiplied on the left by an arbitrary element $\gamma$, gives a proximal element $\gamma g$, then the projective hyperplane $X_{\gamma g}^{<}$ is close to that of $g$, while the attracting directions $x_{\gamma g}^{+}$ and  $x_{g}^{+}$ may differ arbitrarily. The rest of the proof is along the same lines as the so called Tits proximality criterion (See \cite{Tits} 3.8, \cite{AMS} 2.1, \cite{Benoist2} Lemme 6.2).

\begin{lemma}\label{left.multiply.proximal}
Let $g$ be a $(\theta,r,\epsilon)$-proximal element of $G$ and $\gamma \in G$ such that $L(\gamma).\epsilon <1$. Put $1>\epsilon_{1}:=L(\gamma)\epsilon \geq \epsilon$ and suppose there exists a $\delta$ with $\delta > 6 \epsilon_{1}$ such that for each $\alpha_{i} \in \theta$, we have $d_{i}(\rho_{i}(\gamma)x^{+}_{\rho_{i}(g)},X^{<}_{\rho_{i}(g)})>\delta$. Then, $\gamma g$ is $(\theta,\frac{\delta}{3},2\epsilon_{1})$-proximal. Moreover, for each $\alpha_{i} \in \theta$, we have $d(x^{+}_{\rho_{i}(\gamma g)}, \gamma x^{+}_{\rho_{i}(g)})<\epsilon_{1}$ and $d_{H}(X^{<}_{\gamma g}, X^{<}_{g})<\epsilon$.
\end{lemma}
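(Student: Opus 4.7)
The plan is to fix $\alpha_i \in \theta$, work in the representation $V_i$, and suppress the symbol $\rho_i$ so that $g,\gamma \in \GL(V_i)$ and $h := \gamma g$. Then it suffices to show $h$ is $(\delta/3, 2\epsilon_1)$-proximal in $\GL(V_i)$ with attracting direction within $\epsilon_1$ of $\gamma x_g^+$ and invariant hyperplane within Hausdorff distance $\epsilon$ of $X_g^<$; since $(\theta, r', \epsilon')$-proximality is defined coordinatewise in the $\rho_i$ for $\alpha_i \in \theta$, this yields the lemma.

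First I would produce the attracting direction of $h$ by a Banach contraction on $\mathbb{P}(V_i)$. By Lemma~\ref{Lipschitz.action}, $\gamma$ is $L(\gamma)$-Lipschitz on $\mathbb{P}(V_i)$, and by $(r,\epsilon)$-proximality $g$ is $\epsilon$-Lipschitz on $B_g^\epsilon$ with $g(B_g^\epsilon) \subset b_g^\epsilon$. The composition $h$ is therefore $L(\gamma)\epsilon = \epsilon_1$-Lipschitz on $B_g^\epsilon$, with $h(B_g^\epsilon) \subset B(\gamma x_g^+, \epsilon_1)$. The hypothesis $d(\gamma x_g^+, X_g^<) > \delta > 6\epsilon_1$ gives $B(\gamma x_g^+, \epsilon_1) \subset B_g^{\delta - \epsilon_1} \subset B_g^\epsilon$, so $h$ strictly contracts $B_g^\epsilon$ into itself with ratio $\epsilon_1 < 1$. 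Banach produces a unique fixed point $y \in B(\gamma x_g^+, \epsilon_1)$ of $h$, establishing $d(y, \gamma x_g^+) < \epsilon_1$, and the line through $y$ is automatically an eigenline of $h$.

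For the invariant hyperplane, I would argue in the spirit of Tits' proximality criterion at the linear level. The $(r,\epsilon)$-proximality of $g$ says that in the splitting $V_i = {\rm k}\, v_g^+ \oplus V_g^<$, $g$ takes the form $g = \lambda p_g^+ + g_{\mathrm{rest}}$, where $p_g^+$ is the projection onto ${\rm k}\, v_g^+$ along $V_g^<$, $\lambda$ is the simple dominant eigenvalue, and $g_{\mathrm{rest}}$ preserves $V_g^<$ with $\|g_{\mathrm{rest}}\|$ small relative to $|\lambda|$ by a factor controlled by $\epsilon$. Consequently $h = \lambda(\gamma p_g^+) + \gamma g_{\mathrm{rest}}$ is a rank-one operator with image ${\rm k}\,\gamma v_g^+$ and kernel $V_g^<$, plus a small perturbation. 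Standard perturbation theory for a simple dominant eigenvalue, or equivalently a second Banach contraction argument applied to the induced $h$-action on the Grassmannian of hyperplanes near $V_g^<$, then yields an $h$-invariant hyperplane $X_h^<$ at Hausdorff distance less than $\epsilon$ from $X_g^<$, together with a simple dominant eigenline matching the fixed point $y$ produced above.

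Finally I would assemble the $(\delta/3, 2\epsilon_1)$-proximality. The triangle inequality gives
\[
d(y, X_h^<) \geq d(\gamma x_g^+, X_g^<) - d(\gamma x_g^+, y) - d_H(X_g^<, X_h^<) > \delta - \epsilon_1 - \epsilon \geq \delta - 2\epsilon_1 > \tfrac{2\delta}{3},
\]
using $\epsilon \leq \epsilon_1$ and $\delta > 6\epsilon_1$. The inclusion $B_h^{2\epsilon_1} \subset B_g^{2\epsilon_1 - \epsilon} \subset B_g^\epsilon$ follows from $d_H(X_g^<, X_h^<) < \epsilon \leq \epsilon_1$, and on $B_g^\epsilon$ the map $h$ is $\epsilon_1 \leq 2\epsilon_1$-Lipschitz with image contained in $B(\gamma x_g^+, \epsilon_1) \subset B(y, 2\epsilon_1) = b_h^{2\epsilon_1}$ by the previous step. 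The main obstacle is the hyperplane step: obtaining the sharp bound $d_H(X_h^<, X_g^<) < \epsilon$ (rather than some constant multiple of $\epsilon$) requires careful quantitative tracking of the perturbation constants, which can be carried out either by direct linear-algebra bookkeeping on the rank-one-plus-error decomposition or by dualising and running the Banach argument on $\mathbb{P}(V_i^*)$; the rest of the argument is bookkeeping that the inequality $\delta > 6\epsilon_1$ affords.
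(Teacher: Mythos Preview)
Your overall strategy matches the paper's: reduce to a single representation, run Banach on $B_g^\epsilon$ to produce the attracting fixed point $y$ with $d(y,\gamma x_g^+)<\epsilon_1$, and then verify the three conditions of $(\tfrac{\delta}{3},2\epsilon_1)$-proximality exactly as you do. The bookkeeping in your final paragraph is correct and coincides with the paper's items (i)--(iii).

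The one place you diverge is the hyperplane bound $d_H(X_h^<,X_g^<)<\epsilon$, which you flag as the ``main obstacle'' and propose to handle by spectral perturbation, a Grassmannian contraction, or dualisation. The paper gets this for free from the Banach step you already carried out, with no further analysis. The point is that once $h=\gamma g$ is proximal, its basin of attraction in $\mathbb{P}(V_i)$ is exactly $\mathbb{P}(V_i)\setminus X_h^<$. Your contraction argument shows this basin contains $B_g^\epsilon$, hence
\[
X_h^< \subset \mathbb{P}(V_i)\setminus B_g^\epsilon = \{x : d(x,X_g^<)<\epsilon\}.
\]
For two projective hyperplanes the one-sided containment in an $\epsilon$-neighbourhood forces $d_H<\epsilon$ (the Hausdorff distance between projective hyperplanes is symmetric, being the sine of the angle between their normals in the Euclidean case, or the analogous dual quantity in the ultrametric case). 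So the sharp $\epsilon$ you were worried about falls out immediately; no perturbation theory or dual Banach argument is needed.
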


\begin{proof}
To ease the notation, we will dismiss the representations $\rho_{i}$. By our definition of $L(.)$ in (\ref{just19}), our reasonings apply simultaneously to each representation $\rho_{i}$ such that $\alpha_{i} \in \theta$.

We first establish that $\gamma g$ is proximal. One first observes that we have 
\begin{equation} \label{just20} 
\gamma g B_{g}^{\epsilon} \subseteq \gamma b_{g}^{\epsilon} \subseteq B(\gamma x_{g}^{+}, \epsilon L(\gamma)) \subseteq B_{g}^{4 \epsilon_{1}}
\end{equation} where the first inclusions is by $({r,\epsilon})$-proximality of $g$ and the last by out hypothesis that $d(\gamma x_{g}^{+},X^{<}_{g})>\delta \geq 6 \epsilon_{1}$. 

Moreover, the restriction of the action of $\gamma g$ on $B_{g}^{\epsilon}$ is $L(\gamma)\epsilon=\epsilon_{1}$ Lipschitz with, by hypothesis, $\epsilon_{1}<1$. Therefore, $\gamma g$ is a continuous contraction of the compact $B_{g}^{\epsilon}$ into $B_{g}^{4\epsilon_{1}} \subseteq \inte(B_{g}^{\epsilon})$ and thus, by Banach fixed point theorem, has a unique attracting fixed point, of basin of attraction containing $B_{g}^{\epsilon}$. This indeed implies that $\gamma g$ is proximal. One also sees from (\ref{just20}) that we must have $x_{\gamma g}^{+} \in B(\gamma x_{g}^{+}, \epsilon_{1}) $ and $d_{H}(X_{\gamma g}^{<},X^{<}_{g})<\epsilon$. 

To get the complete statement of the lemma, in view of the definition of a $(\theta,\frac{\delta}{3},2\epsilon_{1})$-proximal element, one checks that
\begin{enumerate}
\item Since by above $x_{\gamma g}^{+} \in B(\gamma x_{g}^{+}, \epsilon_{1}) $ and $d_{H}(X_{\gamma g}^{<},X^{<}_{g})<\epsilon$, and by hypothesis $d(\gamma x_{g}^{+},X^{<}_{g})>\delta \geq 6 \epsilon_{1}$, we have 
$d(x_{\gamma g}^{+},X^{<}_{\gamma g }) \geq \delta-\epsilon-\epsilon_{1} \geq \delta-2\epsilon_{1}>2 \frac{\delta}{3}$.
\item Similarly, we have $\gamma g B_{\gamma g}^{2 \epsilon_{1}} \subseteq \gamma g B_{g}^{\epsilon} \subseteq B(\gamma x_{g}^{+}, \epsilon_{1}) \subseteq b_{\gamma g}^{2 \epsilon_{1}}$.
\item Finally, the restriction of the action of $\gamma g $ on $ B_{\gamma g}^{2 \epsilon_{1}} \subseteq B(g)^{\epsilon}$ is $\epsilon_{1}=\epsilon L(\gamma )$ Lipschitz, as observed above.
\end{enumerate} These establish our claim.
\end{proof}

In the next proposition, we exploit more deeply the observation mentioned before the last lemma, in its relation with the result of Lemma \ref{eta.dispersion} and the notion of narrowness of a set of proximal elements. It says that the union of left translates by suitable elements of two sufficiently narrow and contracting Schottky families is a Schottky family. By its probabilistic Corollary \ref{convexitycorol}, it will be of crucial use in proving the convexity of the rate function. 

Let us fix some notation before stating it: let $t$ be a fixed natural number with $t>2\sum_{i=1}^{d_{S}}(\dim V_{i}-1)$. Let $\eta_{t}>0$ and the finite subset $M_{t} $ of $\Gamma$ be as given by Lemma \ref{eta.dispersion}. For a subset $M$ of $G$, denote by $L(M)=\max_{\gamma \in M}(L(\gamma) \vee L(\gamma^{-1})) \in [1, \infty]$ where $L(\gamma)$ is defined as in (\ref{just19}). Observe that by Lemma \ref{Lipschitz.action}, for any $M \subset G$ contained in a compact of $G$, we have $L(M) < \infty$. With these notations, we have:

\begin{proposition} \label{convexityprop}
Let $E_{1}$ and $E_{2}$ be two $(\theta_{\Gamma},r,\epsilon)$-Schottky families in $\Gamma$ with $\epsilon < \frac{\eta_{t}}{96L(M_{t})^{2}}$. Suppose also that $E_{1}$ and $E_{2}$ are $\frac{\eta_{t}}{4L(M_{t})^{2}}$-narrow. Then, there exist $\gamma_{1}$ and $\gamma_{2}$ in $M_{t}$ such that
 $\gamma_{1} E_{1}  \cup \gamma_{2} E_{2} $ is 
 $(\theta_{\Gamma},r_{1},\epsilon_{1})$-Schottky family and we can take $r_{1}= \frac{\eta_{t}}{48 L(M_{t})}$ and $\epsilon_{1}=2 \epsilon L(M_{t})$.
\end{proposition}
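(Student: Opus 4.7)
The plan is to use the narrowness hypothesis to freeze the attracting directions and repelling hyperplanes of each $E_j$ up to the scale $\frac{\eta_t}{4L(M_t)^2}$, apply the dispersion lemma (Lemma \ref{eta.dispersion}) to get candidate left-multipliers in $M_t$ that place these frozen directions in general position, select $\gamma_1, \gamma_2$ by a pigeonhole argument using $t > 2\sum(\dim V_i - 1)$, and finally verify the Schottky property via Lemma \ref{left.multiply.proximal}.

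First, for $j \in \{1,2\}$ and each $\alpha_i \in \theta_\Gamma$, narrowness of $E_j$ supplies a reference point $y_i^{(j)} \in \mathbb{P}(V_i)$ and a reference projective hyperplane $H_i^{(j)} \subset \mathbb{P}(V_i)$ such that $d_i(x^+_{\rho_i(g)}, y_i^{(j)}) < \frac{\eta_t}{4L(M_t)^2}$ and $d_H(X^<_{\rho_i(g)}, H_i^{(j)}) < \frac{\eta_t}{4L(M_t)^2}$ for every $g \in E_j$. I would then apply Lemma \ref{eta.dispersion} to the tuples $\bar{y}^{(1)} := (y_1^{(1)}, \ldots, y_{d_S}^{(1)})$ and $\bar{y}^{(2)}$, obtaining two lists $\{\delta^{(j)}_1, \ldots, \delta^{(j)}_t\} \subset M_t$. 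Condition (2) of that lemma at the level $k = \dim V_i - 1$ forces the points $\{\rho_i(\delta^{(j)}_k) y_i^{(j)}\}_{k=1}^{t}$ into quantitative general position, which (after a short linear-algebra argument) implies that at most $\dim V_i - 1$ of them can lie within distance $\eta_t/2$ of any fixed projective hyperplane of $\mathbb{P}(V_i)$.

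For the pigeonhole step, the choice $t > 2\sum_{i=1}^{d_S}(\dim V_i - 1)$ lets me discard, for each $j$, the at most $2\sum(\dim V_i - 1)$ indices $k$ that fail the test against either $H_i^{(1)}$ or $H_i^{(2)}$ for some $\alpha_i \in \theta_\Gamma$, leaving a valid $\gamma_j := \delta^{(j)}_{k_j}$ with $d_i(\rho_i(\gamma_j) y_i^{(j)}, H_i^{(j')}) > \eta_t/2$ for all $\alpha_i \in \theta_\Gamma$ and $j' \in \{1,2\}$. Transporting the frozen data back to the honest attracting and repelling data of $\gamma_j g$ for $g \in E_j$ costs at most $L(M_t) \cdot \frac{\eta_t}{4L(M_t)^2} = \frac{\eta_t}{4L(M_t)}$ for the attracting points (via Lemma \ref{Lipschitz.action}) and $\frac{\eta_t}{4L(M_t)^2}$ for the repelling hyperplanes; together with the error $\epsilon_1 = 2\epsilon L(M_t)$ produced by Lemma \ref{left.multiply.proximal}, these corrections are comfortably smaller than $\eta_t/2 - 6r_1 = \eta_t/2 - \frac{\eta_t}{8L(M_t)}$ under the hypothesis $\epsilon < \frac{\eta_t}{96L(M_t)^2}$. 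Lemma \ref{left.multiply.proximal} (whose requirement $L(\gamma_j)\epsilon < 1$ is ensured by the same bound on $\epsilon$) then yields that each $\gamma_j g$ is $(\theta_\Gamma, r_1, \epsilon_1)$-proximal, and the triangle inequalities just described provide the Schottky separation $d_i(x^+_{\rho_i(\gamma)}, X^<_{\rho_i(\gamma')}) \geq 6r_1$ for all $\gamma, \gamma' \in \gamma_1 E_1 \cup \gamma_2 E_2$ and all $\alpha_i \in \theta_\Gamma$.

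The hard part will be the quantitative pigeonhole claim: translating condition (2) of Lemma \ref{eta.dispersion} into the clean bound that at most $\dim V_i - 1$ of the $t$ points $\rho_i(\delta^{(j)}_k) y_i^{(j)}$ can lie near any fixed projective hyperplane. This requires an auxiliary linear-algebra estimate showing that $\dim V_i - 1$ pairwise-separated points lying close to a hyperplane $H$ must span a hyperplane within a controlled Hausdorff distance of $H$, so that dispersion against the $(\dim V_i)$-th candidate point would be violated. The constants in this linear-algebra step, together with the Lipschitz constant $L(M_t)$ governing how strongly left-multiplication by elements of $M_t$ can distort the projective geometry, are exactly what dictate the numerical relation $\epsilon < \frac{\eta_t}{96L(M_t)^2}$ in the hypothesis.
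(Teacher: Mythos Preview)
Your approach is essentially the same as the paper's: fix reference attracting points via narrowness, apply the dispersion lemma to each family, discard at most $2\sum_i(\dim V_i-1)$ bad indices per family, and conclude via Lemma~\ref{left.multiply.proximal}.

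The one place where you diverge is the step you flag as ``the hard part''. You anticipate needing a quantitative stability estimate (points \emph{close} to a hyperplane span something close to that hyperplane). The paper avoids this entirely by a small trick: instead of asking which centers $\rho_i(\delta^{(j)}_k)y_i^{(j)}$ lie near $H$, it asks which \emph{pulled-back balls} $\rho_i(\delta^{(j)}_k)B_i(y_i^{(j)},\eta_t)$ actually \emph{intersect} $H$. The Lipschitz inclusion $B(\gamma x,\delta)\subseteq \gamma B(x,L(M_t)\delta)$ (your Lemma~\ref{Lipschitz.action}) converts the former question into the latter. Now if $\dim V_i$ such balls meet $H$, one can choose the witnesses $u_1,\ldots,u_{\dim V_i}\in B_i(y_i^{(j)},\eta_t)$ with $\rho_i(\delta^{(j)}_{k_\ell})u_\ell\in H$ \emph{exactly}; then the first $\dim V_i-1$ of them span a subspace contained in $H$, and the last one lies in $H$ too, directly contradicting condition~(2) of Lemma~\ref{eta.dispersion} with $k=\dim V_i-1$. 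No approximation or stability argument is needed. This is the only real simplification the paper offers over your outline; the rest of your sketch (the four bad-index sets $D_1,D_2,D_{12},D_{21}$ per representation, the final triangle-inequality bookkeeping with $r_1=\frac{\eta_t}{48L(M_t)}$ and $\epsilon_1=2\epsilon L(M_t)$) matches the paper line for line.
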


\begin{proof}
To simplify the notation, we will only work in one fixed representation $(\rho,V)$ among $(\rho_{i},V_{i})_{i}$'s such that $\alpha_{i} \in \theta_{\Gamma}$ and dismiss this from the notation as in the proof of the previous lemma. Our reasonings are such that they simultaneously apply to all representations $(\rho_{i},V_{i})_{i}$ with $\alpha_{i} \in \theta_{\Gamma}$; except at one point at the very end of the proof, where of course we will take into account all representations (we explicitly indicate that point). 

 By hypothesis, there exist $Y^{1}$ and $Y^{2}$, subsets of $\mathbb{P}(V)$ of diameter less than $\frac{\eta_{t}}{4L(M_{t})^{2}}$ and such that for $i=1,2$, for all $g \in E_{i}$, we have $x^{+}_{g} \in Y^{i}$. Let $y_{1}$ and $y_{2}$ be respectively in $Y^{1}$ and $Y^{2}$ such that for $i=1,2$; $E_{i}^{+}:=\{x^{+}_{g} \; | \; g \in E_{i} \} \subseteq B(y_{i},\frac{\eta_{t}}{4 L(M_{t})^{2}})$. Take elements $\gamma_{1,1},\ldots, \gamma_{1,t}$ and $\gamma_{2,1},\ldots, \gamma_{2,t}$ from $M_{t}$ satisfying the conclusions of Lemma \ref{eta.dispersion} respectively for the points $y_{1}$ and $y_{2}$.

 Reformulating the conclusion 2) of Lemma \ref{eta.dispersion}; we have that for each hyperplane $H \subset V$; there exist at most $k$ distinct indices $i_{1},\ldots, i_{k} \subset \{1,\ldots,t\}$ with $k \leq \dim V -1$, such that for each $l=1,\ldots,k$, $\mathbb{P}(H) \cap \gamma_{1,i_{l}}.B(y_{1},\eta_{t})\ne \emptyset$. Indeed, otherwise there exist $u_{1},\ldots,u_{\dim V} \in B(y_{1},\eta_{t})$ and $\gamma_{1, i_{1}},\ldots,\gamma_{1, i_{\dim V}} \in M_{t}$ such that $\mathbb{P}(H)$ contains the projective image of the span of the lines $\{\gamma_{1,i_{1}}.u_{1},\ldots,\gamma_{1,i_{\dim V}}.u_{\dim V}\}$ contradicting the conclusion of Lemma \ref{eta.dispersion}. (Of course, the same conclusion holds true for $\gamma_{1,i_{j}}$'s replaced by $\gamma_{2,i_{j}}$'s and $y_{1}$ by $y_{2}$)

Meanwhile, note that for each $\gamma \in M_{t}$, $x \in \mathbb{P}(V)$ and $\delta \geq 0$, by definition of $L(M_{t})$, we have 
\begin{equation}\label{just21}
\gamma B(x,\delta) \subseteq B(\gamma .x, L(M_{t})\delta) \subseteq \gamma B(x, L(M_{t})^{2}\delta)
\end{equation}
Now, we claim that there are at most $\dim V-1$ distinct elements $\gamma_{1,i_{1}}, \ldots, \gamma_{1,i_{k}}$ among $\{\gamma_{1,1},\ldots, \gamma_{1,t}\}$ such that 
\begin{equation}\label{just22}
B(\gamma_{1,i_{j}}y_{1},\frac{\eta_{t}}{2L(M_{t})}) \cap E_{1}^{<} \ne \emptyset
\end{equation} where we have put $E_{1}^{<}=\underset{g \in E_{1}}{\bigcup} X^{<}_{g}$.

 Indeed, if $i \in \{1, \ldots, t\}$ is such that $B(\gamma_{1,i}y_{1}, \frac{\eta_{t}}{2L(M_{t})}) \cap E^{<}_{1} \neq \emptyset$, then since by hypothesis for all $g,h \in E_{1}$, one has $d_{H}(X^{<}_{g},X^{<}_{h})< \frac{\eta_{t}}{4 L(M_{t})^{2}}$, we have that for each $g \in E_{1}$; $B(\gamma_{1,i}y_{1},\frac{1+2L(M_{t})}{4L(M_{t})^{2}}\eta_{t}) \cap X^{<}_{g} \neq \emptyset$. But by (\ref{just21}), since $L(M_{t}) \geq 1$, this implies that $\gamma_{1,i} B(y_{1}, \frac{1+2L(M_{t})}{4L(m_{t})} \eta_{t}) \cap X^{<}_{g} \neq \emptyset$ for each $g \in E_{1}$. Therefore, as $E_{1} \ne \emptyset$, we have found an hyperplane $\mathbb{P}(H)$ in $\mathbb{P}(V)$ (take $H=X_{g}^{<}$ for an element $g \in E_{1}$) such that for each $i \in \{1, \ldots, t\}$ satisfying (\ref{just22}), we have $\gamma_{1,i} B(y_{1}, \frac{1+2L(M_{t})}{4L(m_{t})} \eta_{t}) \cap \mathbb{P}(H) \neq \emptyset$. Since $\frac{1+2L(M_{t})}{4L(m_{t})} <1$, the above reformulation of the conclusion of Lemma \ref{eta.dispersion} tells us that there are at most $\dim V-1$ such indices $i \in \{1, \ldots, t\}$. Put 
\begin{equation*}
D_{1}:=\{i \in \{1, \ldots, t\} \; | \; B(\gamma_{1,i}y_{1}, \frac{\eta_{t}}{2L(M_{t})}) \cap E^{<}_{1} \neq \emptyset\} 
\end{equation*} so that $|D_{1}| \leq \dim V -1$.

 Observe then that for each $i \in \{1, \ldots, t \}\setminus D_{1} $, $g \in E_{1}$ and $x \in X^{<}_{g}$, we have 
\begin{equation}\label{just23}
d(B(\gamma_{1,i}y_{1}, \frac{\eta_{t}}{4L(M_{t})}),x) \geq  \frac{\eta_{t}}{4L(M_{t})}
\end{equation} Therefore, since $E_{1}^{+} \subseteq B(y_{1}, \frac{\eta_{t}}{4L(m_{t})^{2}})$, by (\ref{just21}) we have that for each $\gamma \in M_{t}$; $\gamma E_{1}^{+} \subset B(\gamma. y_{1}, \frac{\eta_{t}}{4L(M_{t})})$ so that (\ref{just23}) implies 
\begin{equation}\label{just24}
d(\gamma_{1,i}x^{+}_{g}, X^{<}_{h}) \geq \frac{\eta_{t}}{4L(M_{t})}
\end{equation} for all $g,h \in E_{1}$ and for each $i \in \{1,\ldots, t\} \setminus D_{1}$.

 As a consequence, since by hypothesis $\epsilon < \frac{1}{L(M_{t})}$ and $6 \epsilon L(M_{t})<\frac{\eta_{t}}{4L(M_{t})}$, Lemma \ref{left.multiply.proximal} is in force and gives that for each $i \in \{1, \ldots, t\} \setminus D_{1}$ and $g \in E_{1}$; $\gamma_{1,i}g$ is $(\frac{\eta_{t}}{12L(M_{t})},2 \epsilon L(M_{t}))$-proximal. Moreover, $d(x_{\gamma_{1,i}g}^{+},\gamma_{1,i}x_{g}^{+})<2 \epsilon L(M_{t})$ and $d_{H}(X^{<}_{\gamma_{1,i}g},X^{<}_{g})<\epsilon$.

 Combining these last two inequalities with (\ref{just24}), one sees that for all $g,h \in E_{1}$, and for each $i \in \{1, \ldots, t\} \setminus D_{1}$, we have
\begin{equation}\label{just25}
d(x^{+}_{\gamma_{1,i}g},X^{<}_{\gamma_{1,i}h}) \geq \frac{\eta_{t}}{2L(M_{t})}-2 \epsilon L(M_{t})-\epsilon \geq \frac{\eta_{t}}{8L(M_{t})}
\end{equation} Hence, it follows that for each $i \in \{1, \ldots, t\} \setminus D_{1}$, $\gamma_{1,i}E_{1}$ is a $(\frac{\eta_{t}}{48L(M_{t})},2\epsilon L(M_{t}))$-Schottky family.

Repeating exactly the same argument for $E_{2}$, one finds a subset $D_{2}$ of $\{1,\ldots,t\}$ such that $|D_{2}| \leq \dim V -1$ and for each $i \in \{1, \ldots, t\} \setminus D_{2}$, one has that $\gamma_{2,i}E_{2}$ is a $(\frac{\eta_{t}}{48L(M_{t})},2\epsilon L(M_{t}))$-Schottky family.

Again, the same reasoning, replacing in (\ref{just22}) $E^{<}_{1}$ by $E^{<}_{2}$, allows us to see that there exist at most $\dim V -1$ indices $i \in \{1,\ldots,t \}$, denoting the set of these by $D_{12}$, such that for each $g \in E_{1}$, $h\in E_{2}$ and  $i \in \{1, \ldots, t\} \setminus D_{12}$; we have $d(\gamma_{1,i}x^{+}_{g},X^{<}_{h}) \geq \frac{\eta_{t}}{4L(M_{t})}$. By the same token, we get $D_{21} \subset  \{1,\ldots,t \}$ with the corresponding properties.

By consequent, it follows that for each $i_{1} \in \{1,\ldots,t\} \setminus D_{1} \cup D_{12}$ and $i_{2} \in \{1,\ldots,t\} \setminus D_{2} \cup D_{21}$, $\gamma_{1,i_{1}}E_{1} \cup \gamma_{2,i_{2}}E_{2}$ is a $(\frac{\eta_{t}}{48L(M_{t})}, 2\epsilon L(M_{t}))$-Schottky family in $\mathbb{P}(V)$.

At this point, as indicated at the beginning of the proof, regarding the construction of the index sets $D_{1},D_{2},D_{12},D_{21}$, we must take into account each of the representations $\rho_{i}$ such that $\alpha_{i} \in \theta_{\Gamma}$. Hence, repeating the same procedure for each such $\rho_{i}$, we get index subsets $D^{j}_{1},D^{j}_{2},D^{j}_{12},D^{j}_{21}$ of $\{1,\ldots,t\}$ for each $j$ such that $\alpha_{j} \in  \theta_{\Gamma} \subseteq \{\alpha_{1}, \ldots, \alpha_{d_{S}}\}$ and with cardinality at most $\dim V_{j}-1$. Up to re-indexing, set $\theta_{\Gamma}=\{\alpha_{1},\ldots,\alpha_{d_{\Gamma}}\}$, where $d_{S}\geq d_{\Gamma}:=|\theta_{\Gamma}|$. 

Finally, denoting $\tilde{D}_{1}:=\overset{d_{\Gamma}}{\underset{j=1}{\bigcup}} (D^{j}_{1} \cup D^{j}_{12})$ and $\tilde{D}_{2}:=\overset{d_{\Gamma}}{\underset{j=1}{\bigcup}} (D^{j}_{2} \cup D^{j}_{21})$, since for $i=1,2$, $t> 2 \sum_{j=1}^{d_{S}}(\dim V_{j}-1)\geq |\tilde{D}_{i}|$, we have $\{1,\ldots,t\} \setminus \tilde{D}_{i} \ne \emptyset$. As a result, choosing $\gamma_{i} \in \{1,\ldots,t\} \setminus \tilde{D_{i}}$ for $i=1,2$, we get that $\gamma_{1}E_{1} \cup \gamma_{2}E_{2}$ is a $(\theta_{\Gamma},\frac{\eta_{t}}{48L(M_{t})}, 2\epsilon L(M_{t}))$-Schottky family, proving the proposition.

\end{proof}

\begin{remark}\label{convexity.proposition.remark}
One notes from the proof that this proposition is also true with $\gamma_{i}$ replaced by any $\gamma_{i}'$ in the neighbourhood $V_{\gamma_{i}}$ of $\gamma_{i}$ given by Remark \ref{dispersion.remark} for $i=1,2$, and $L(M_{t})$ by $L(\cup_{\gamma \in M_{t}} V_{\gamma})$.
\end{remark}

Combining the previous proposition with Lemma \ref{pdaraltma} and Corollary \ref{ppdaraltma}, we obtain the following technical probabilistic corollary which will be an essential step in our proof of convexity of the rate function. In the corollary, we denote by $L$, the Lipschitz constant $L(\cup_{\gamma \in M_{t}} V_{\gamma})$ of the union of neighbourhoods of elements of $M_{t}$ given by Remark \ref{dispersion.remark}. Since $M_{t}$ is a finite set and $V_{\gamma}$'s are bounded, we have $L \in [1,\infty)$.

\begin{corollary} \label{convexitycorol}
Let $\epsilon$ and $r$ be given with $0<\epsilon < \frac{r}{6} \wedge \frac{\eta_{t}}{96L^{2}}$. Then, there exist a natural number $i_{1}=i_{1}(\mu,M_{t})$, a constant $d_{4}>0$ depending on the probability measure and a compact subset $\tilde{K}$ of $\mathfrak{a}$ with the property that for all subsets $E_{1}$ and $E_{2}$ of $\Gamma$ consisting of $(\theta_{\Gamma},r,\epsilon)$-proximal elements, for all $n_{1},n_{2} \in \mathbb{N}$ there exist two natural numbers $n_{1}+i_{1} \geq n_{1,1} \geq n_{1}$ and $n_{2}+i_{1} \geq n_{2,2} \geq n_{2}$, two $(\theta_{\Gamma},r_{1},\epsilon_{1})$-Schottky families $\tilde{E}_{1}$ and $\tilde{E}_{2}$ such that $\tilde{E}_{1} \cup \tilde{E}_{2}$ is an $(\theta_{\Gamma},r_{1},\epsilon_{1})$-Schottky family and for $i=1,2$, $\mathbb{P}(S_{n_{i,i}} \in \tilde{E}_{i}) \geq \mathbb{P}(S_{n_{i}} \in E_{i}).d_{4}$. Moreover, we have $\kappa(\tilde{E}_{i}) \subset \kappa(E_{i}) + \tilde{K}$, and one can choose $r_{1}=\frac{\eta_{t}}{48L}$ and $\epsilon_{1}=2 \epsilon L$.
\end{corollary}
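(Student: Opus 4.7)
The argument combines three ingredients already established in the paper: the narrowing lemma (Lemma \ref{pdaraltma}), the fact that a narrow family of proximal elements is automatically Schottky (Lemma \ref{daraltma}), and the two-family Schottky proposition (Proposition \ref{convexityprop}). The only genuinely new content is probabilistic: the left multiplications by $\gamma_1,\gamma_2\in M_t$ produced by Proposition \ref{convexityprop} must be realized inside the random walk at a uniformly positive cost, which we will do as in the proof of Lemma \ref{AMS.dispersion}, using $M_t\subset\Gamma=\langle\mathrm{supp}(\mu)\rangle$ together with the \emph{finiteness} of $M_t$.

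Concretely, set $a:=\min\bigl(r,\tfrac{\eta_t}{4L^2}\bigr)$ and apply Lemma \ref{pdaraltma} to each $E_i$ at step $n_i$ to extract an $a$-narrow subfamily $E_i^\flat\subset E_i$ satisfying
$$\mathbb{P}(Y_{n_i}\in E_i^\flat)\geq d_2(a)\,\mathbb{P}(Y_{n_i}\in E_i).$$
Since $a\leq r$ and $r\geq 6\epsilon$, Lemma \ref{daraltma} promotes $E_i^\flat$ to a $(\theta_\Gamma,r/6,\epsilon)$-Schottky family, while the inequalities $a\leq\tfrac{\eta_t}{4L^2}$ and $\epsilon<\tfrac{\eta_t}{96L^2}$ place us in the hypotheses of Proposition \ref{convexityprop} in its neighborhood-uniform form of Remark \ref{convexity.proposition.remark}. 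Hence there exist $\gamma_1,\gamma_2\in M_t$ (depending on $E_1,E_2$) such that, for every choice of $\gamma_i'\in V_{\gamma_i}$, the set $\gamma_1'E_1^\flat\cup\gamma_2'E_2^\flat$ is an $(\theta_\Gamma,\tfrac{\eta_t}{48L},2\epsilon L)$-Schottky family.

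Next, because $M_t$ is finite and contained in the semigroup generated by $\mathrm{supp}(\mu)$, for each $\gamma\in M_t$ we may fix $k_\gamma\in\mathbb{N}$ with $\mu^{\ast k_\gamma}(V_\gamma)>0$. Put $i_1:=\max_{\gamma\in M_t}k_\gamma$ and $p:=\min_{\gamma\in M_t}\mu^{\ast k_\gamma}(V_\gamma)>0$, both depending only on $\mu$ and $M_t$. Define $n_{i,i}:=n_i+k_{\gamma_i}$ and $\tilde E_i:=V_{\gamma_i}\cdot E_i^\flat$. The factorization $Y_{n_{i,i}}=(X_{n_{i,i}}\cdots X_{n_i+1})\,Y_{n_i}$ into independent blocks yields
$$\mathbb{P}(Y_{n_{i,i}}\in\tilde E_i)\;\geq\;\mu^{\ast k_{\gamma_i}}(V_{\gamma_i})\cdot\mathbb{P}(Y_{n_i}\in E_i^\flat)\;\geq\;p\,d_2\,\mathbb{P}(Y_{n_i}\in E_i),$$
so $d_4:=p\,d_2$ works. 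Every element of $\tilde E_i$ has the form $\gamma_i'g$ with $\gamma_i'\in V_{\gamma_i}$ and $g\in E_i^\flat$, so the previous paragraph gives directly that $\tilde E_1$, $\tilde E_2$, and $\tilde E_1\cup\tilde E_2$ are all $(\theta_\Gamma,r_1,\epsilon_1)$-Schottky with the stated values of $r_1,\epsilon_1$. Finally, the Cartan projection bound $\kappa(\tilde E_i)\subset\kappa(E_i)+\tilde K$ follows at once from the uniform continuity of $\kappa$ (Corollary \ref{Cartan.stability}) applied to the compact set $L_t:=\bigcup_{\gamma\in M_t}\overline{V}_\gamma$; the resulting compact $\tilde K$ is common to all $E_1,E_2$.

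The proof presents no serious obstacle once Proposition \ref{convexityprop} is available: the delicate matter is only bookkeeping with the constants, namely selecting $a$ so that it is simultaneously $\leq r$ (for the Schottky upgrade via Lemma \ref{daraltma}) and $\leq\tfrac{\eta_t}{4L^2}$ (for Proposition \ref{convexityprop}), and extracting the uniformity of $i_1$, $d_4$, and $\tilde K$ from the \emph{finiteness} of $M_t$ by replacing every per-$\gamma_i$ quantity by its min/max over the fixed finite set $M_t$.
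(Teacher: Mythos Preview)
Your proof is correct and follows essentially the same route as the paper's: narrow the proximal families via Lemma \ref{pdaraltma}, feed them into Proposition \ref{convexityprop} (in its neighborhood-uniform form, Remark \ref{convexity.proposition.remark}) to obtain $\gamma_1,\gamma_2\in M_t$, and then realize the left-multiplications probabilistically by independence, taking uniform constants over the finite set $M_t$. The only cosmetic difference is that the paper first applies Corollary \ref{ppdaraltma} to make the $E_i$ Schottky and \emph{then} narrows with $a=\tfrac{\eta_t}{4L^2}$ (so its $d_4=d_2d_3\beta$), whereas you collapse these into a single narrowing with $a=\min(r,\tfrac{\eta_t}{4L^2})$ and invoke Lemma \ref{daraltma} once; this yields a slightly cleaner constant $d_4=p\,d_2(a)$ but is otherwise the same argument.
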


\begin{proof}
Write $M_{t}=\{\gamma_{1},\ldots,\gamma_{m}\}$ and put $i_{1}=i_{1}(\mu,M_{t})$ a natural number such that $M_{t} \subset \bigcup_{i=1}^{i_{1}}(\text{supp}(\mu^{\ast i}))$. For each $i=1,\ldots,m$, take neighbourhoods $V_{\gamma_{i}}$ of $\gamma_{i}$'s as in Remark \ref{dispersion.remark}, set $k_{i} \leq i_{1}$ such that $\mu^{\ast k_{i}}(V_{\gamma_{i}})=:\beta_{i}>0$ and finally put $\beta:= \min_{1\leq i\leq m}\beta_{i}>0$. Furthermore, taking the compact subset $\cup_{i=1}^{m} \overline{V}_{\gamma_{i}}$ of $G$ as $L$ in Corollary \ref{Cartan.stability}, get a compact subset $\tilde{K}$ of $\mathfrak{a}$ satisfying the conclusion of Corollary \ref{Cartan.stability}. Let also $d_{2}=d_{2}(t,\Gamma)>0$ be the constant given by Lemma \ref{pdaraltma}, in which we take $a=\frac{\eta_{t}}{4L^{2}}$, $d_{3}=d_{3}(r)>0$ be the constant given by Corollary \ref{ppdaraltma} and finally set $d_{4}=d_{2}d_{3}\beta>0$.

Let now $E_{1}$ and $E_{2}$ be two given subsets of $\Gamma$ consisting of $(\theta_{\Gamma},r,\epsilon)$-proximal elements and $n_{1}, n_{2} \in \mathbb{N}$. Applying Corollary \ref{ppdaraltma} for $E_{1}$ and $E_{2}$, 
there exist two $(\theta_{\Gamma},\frac{r}{6},\epsilon)$-Schottky families, $E_{1}' \subset E_{1}$ and $E_{2}' \subset E_{2}$ such that for $i=1,2$ 
\begin{equation}\label{just25.1}
\mathbb{P}(S_{n_{i}} \in E_{i}')\geq \mathbb{P}(S_{n_{i}} \in E_{i}).d_{3}
\end{equation} Noting that subsets of $(\theta_{\Gamma},r,\epsilon)$-Schottky families are themselves  $(\theta_{\Gamma},r,\epsilon)$-Schottky families, using (\ref{just25.1}) and applying Lemma \ref{pdaraltma} twice with $a=\frac{\eta_{t}}{4L^{2}}$ for respectively $E_{1}'$, $E_{2}'$ and $n_{1}, n_{2}$, we get two $\frac{\eta_{t}}{4L}$-narrow $(\theta_{\Gamma},\frac{r}{6},\epsilon)$-Schottky families $\hat{E}_{1} \subset E_{1}'$  and $\hat{E}_{2} \subset E_{2}'$ such that for $i=1,2$
\begin{equation}
\mathbb{P}(S_{n_{i}} \in \hat{E}_{i}) \geq  \mathbb{P}(S_{n_{i}} \in E_{i})d_{3}d_{2} 
\end{equation}

Now applying Proposition \ref{convexityprop} (and Remark \ref{convexity.proposition.remark}) to the $(\theta_{\Gamma},\frac{r}{6},\epsilon)$-Schottky families $\hat{E}_{1}$ and $\hat{E}_{2}$, remarking that the hypotheses of that proposition is satisfied by the constructions of $\hat{E}_{1}$  and $\hat{E}_{2}$, we get that, up to reindexing, there exist $\gamma_{1}$, $\gamma_{2}$ in $M_{t}$ such that, setting for $i=1,2$, $\tilde{E}_{i}:=V_{\gamma_{i}} \hat{E}_{i} $, $\tilde{E}_{1} \cup \tilde{E}_{2}$ is an $(\theta_{\Gamma},r_{1},\epsilon_{1})$-Schottky family, where we can take $r_{1}=\frac{\eta_{t}}{48L}$ and $\epsilon_{1}=2\epsilon L$.

Then, setting $n_{1,1}:=n_{1}+k_{1} \leq n_{1}+i_{1}$ and $n_{2,2}=n_{2}+k_{2} \leq n_{2}+i_{1}$; by independence of random walk increments, for $i=1,2$, we have 
\begin{equation*}
\begin{aligned}
\mathbb{P}(S_{n_{i,i}} \in \tilde{E}_{i}) & \geq \mathbb{P}(X_{n_{i}+k_{i}}. \ldots .X_{n_{i}} \in V_{\gamma_{i}} \; \text{and} \; S_{n_{i}} \in \hat{E}_{i} ) \\ & = \mathbb{P}(S_{n_{i}} \in \hat{E}_{i})\mathbb{P}(S_{k_{i}} \in V_{\gamma_{i}}) \geq \mathbb{P}(S_{n_{i}} \in E_{i})\beta d_{3}d_{2}=\mathbb{P}(S_{n_{i}} \in E_{i})d_{4}
\end{aligned}
\end{equation*} 

Finally, one remarks that for $i=1,2$, we have $\tilde{E}_{i} \subset M_{t} \hat{E}_{i} \subset M_{t} E_{i} $ so that by choice of $\tilde{K}$, Corollary \ref{Cartan.stability} implies that $\kappa(\tilde{E}_{i}) \subset \kappa(E_{i}) + \tilde{K}$, establishing the last claim. 
\end{proof}

\subsection{Proof of convexity}
We are now in a position to prove the convexity result:

\begin{proof}[Proof of Theorem \ref{kanitsav1} (Convexity of the rate function)]\label{proof.of.convexity}
Denoting the rate function by $I$, start by observing that, by lower semi-continuity, it is sufficient to show that for all $x_{1},x_{2} \in \mathfrak{a}$, we have $I(\frac{x_{1}+x_{2}}{2})\leq \frac{I(x_{1})}{2}+\frac{I(x_{2})}{2}$. For this, we can indeed suppose that $x_{1},x_{2}$ belongs to the effective domain $D_{I}$ of $I$, where $D_{I}:=\{x \in \mathfrak{a} \, | \, I(x) < \infty \}$.
We shall argue by contradiction. 
 
Suppose there exists $x_{1},x_{2} \in D_{I}$ with $I(\frac{x_{1}+x_{2}}{2}) > \frac{I(x_{1})}{2}+\frac{I(x_{2})}{2} + 5\xi $ for some $\xi>0$. By the weak LDP and Remark \ref{weak.LDP.implies.remark}, $I$ satisfies
\begin{equation} \label{LDPimplies}
I(x)=\underset{\underset{x \in O}{O \, \text{open}}}{\sup} - \limsup_{n \to \infty} \frac{1}{n} \log \mu_{n}(O)=\underset{\underset{x \in O}{O \, \text{open}}}{\sup} - \liminf_{n \to \infty} \frac{1}{n} \log \mu_{n}(O)
\end{equation} Hence, we can find neighbourhoods $O^{12}_{1} \subset O^{12}_{2}$ of $\frac{x_{1}+x_{2}}{2}$;
 where the inclusions are super-strict and such that 
 \begin{equation}\label{just7.0}
 -\limsup_{n \to \infty}\frac{1}{n}\log \mu_{n}(O^{12}_{2}) \geq \frac{I(x_{1})}{2}+\frac{I(x_{2})}{2}+4\xi. 
 \end{equation}
 
By (\ref{LDPimplies}) and (\ref{just7.0}), for $i=1,2$, one can also find neighborhoods $x_{i} \subset O_{1}^{i} \subset O_{2}^{i} \subset O_{3}^{i} $  where the inclusions are super-strict and $O_{i}^{j}$'s are such that 
$O^{1}_{3} \cap O^{2}_{3} = \emptyset$, $\frac{O^{1}_{3}+O^{2}_{3}}{2} \subset O^{12}_{1}$ and 
\begin{equation}\label{just7}
-\limsup_{n \to \infty}\frac{1}{n}\log \mu_{n} (O_{2}^{12}) \geq \frac{1}{2} \sum_{i=1}^{2} -\liminf_{n \to \infty} \frac{1}{n}\log \mu_{n}(O^{i}_{1}) +3\xi 
\end{equation} It follows from (\ref{just7}) that, there exists $N_{0} \in \mathbb{N}$ such that for all $m \geq N_{0}$, we have
\begin{equation}\label{just9}
-\limsup_{n \to \infty}\frac{1}{n}\log \mu_{n} (O_{2}^{12}) \geq \frac{1}{2} \sum_{i=1}^{2} -\frac{1}{m}\log \mu_{m}(O^{i}_{1}) +2\xi 
\end{equation}

Now, let $r=r(\Gamma)>0$ be as given by Theorem \ref{AMS}, $t=1 + 2 \sum_{i}^{d_{S}} (\dim V_{i}-1)$, $\eta_{t}>0$, the finite set $M_{t} \subset \Gamma$ as given by Lemma \ref{eta.dispersion}, for each $\gamma \in M_{t}$, its neighbourhood $V_{\gamma}$ as in Remark \ref{dispersion.remark} and set $L \geq 1$ to be the Lipschitz constant $L(\cup_{\gamma \in M_{t}} V_{\gamma})$. Choose $\epsilon <\frac{r}{6} \wedge \frac{\eta_{t}}{96L^{2}}$. Put $r_{1}=\frac{\eta_{t}}{48 L}$ and $\epsilon_{1}=2 \epsilon L$. Let also the constants $d_{1}=d_{1}(\epsilon,\Gamma,\mu)$, $i_{0}=i_{0}(\epsilon,\Gamma,\mu)$ and the compact subset $C=C(\epsilon,\Gamma)$ of $\mathfrak{a}$ be as given by Lemma \ref{AMS.dispersion}. Denote by $K$ the compact set $K(r_{1},\epsilon_{1}) \subset \mathfrak{a}$ given by Proposition \ref{SchottkyCartaniterate}. Let also the compact set $\tilde{K}$ and the constants $d_{4}>0$, $i_{1}=i_{1}(\mu,M_{t})$ be as in Corollary \ref{convexitycorol}. Finally, fix $Q \in \mathbb{N}$ with for $i=1,2$,
 $Q \geq Q(O^{i}_{1},O^{i}_{2}, C+ \tilde{K}) \vee Q(O^{i}_{2},O^{i}_{3},K)$ 
and $q=q(O^{12}_{1},O^{12}_{2})<1$, where $Q(.,.,.)$ and $q(.,.)$ are as defined in Lemma \ref{Pvear}.

 Now, choose $n_{0} \in \mathbb{N}$ with 
\begin{enumerate}
\item $n_{0} \geq N_{0}$ \label{just8.1}
\item $e^{-n_{0}\xi} \leq d_{1}d_{4}$ \label{just8.2}
\item $n_{0} \geq Q$ \label{just8.3}
\item $\frac{n_{0}}{n_{0}+i_{0}+i_{1}}>q$ \label{just8.4}
\end{enumerate} and put for $i=1,2$, $\alpha_{i}=-\frac{1}{n_{0}}\log \mu_{n_{0}}(O^{i}_{1})$ and $\beta=-\limsup_{n \to \infty}\frac{1}{n}\log \mu_{n}(O^{12}_{2})$ so as to have by item (i) of the choice of $n_{0}$ and (\ref{just9}) that 
\begin{equation}\label{just10.0}
\beta \geq \frac{\alpha_{1}+\alpha_{2}}{2}+2\xi
\end{equation}

Applying Lemma \ref{AMS.dispersion} twice, once with taking $A=n_{0}O_{1}^{1}$ and the other $A=n_{0}O_{1}^{2}$ in that lemma, one gets $n_{1},n_{2} \in \mathbb{N}$ with for $i=1,2$ $n_{0}+i_{0} \geq n_{i} \geq n_{0}$ and
\begin{equation}\label{just10}
\mathbb{P}(\kappa(Y_{n_{i}}) \in n_{0}O^{i}_{1}+C \; \text{and} \; Y_{n_{i}} \; \text{is} \; (\theta_{\Gamma},r,\epsilon)\text{-proximal} )\geq e^{-n_{0}\alpha_{i}}d_{1}
\end{equation} 

Setting for $i=1,2$; $E_{i}:=\kappa^{-1}(n_{0}O^{i}_{1}+C) \cap \Gamma_{(r,\epsilon)}$, where $\Gamma_{(r,\epsilon)}$ denotes $(\theta_{\Gamma},r,\epsilon)$-proximal elements of $\Gamma$, by (\ref{just10}) $E_{i}$'s are non-empty and by our choices of $r$ and $\epsilon$, they satisfy the hypotheses of Corollary \ref{convexitycorol}. This corollary therefore gives that for some $n_{11},n_{22} \in \mathbb{N}$ with for $i=1,2$; $n_{0}+i_{0}+i_{1} \geq n_{ii} \geq n_{0}$, there exist two $(\theta_{\Gamma},r_{1},\epsilon_{1})$-Schottky families $\tilde{E}_{i}$ such that $\tilde{E}_{1} \cup \tilde{E}_{2}$ is also an $(\theta_{\Gamma},r_{1},\epsilon_{1})$-Schottky family with
\begin{equation}\label{just10.a} 
\mathbb{P}(Y_{n_{ii}} \in \tilde{E_{i}} \; \text{and} \; \kappa(Y_{n_{ii}}) \in n_{0}O^{i}_{1}+C+\tilde{K}) \geq e^{-n_{0}\alpha_{i}}d_{1}d_{4} \geq e^{-n_{0}(\alpha_{i}+\xi)}
\end{equation} by the definitions of $E_{i}$ above and the last statement of Corollary \ref{convexitycorol} and where the last equality follows from the choice of $n_{0}$, namely item (ii). Furthermore, by item (iii) in the choice of $n_{0}$, (\ref{just10.a}), implies 
\begin{equation}\label{just10.b}
\mathbb{P}(Y_{n_{ii}} \in \tilde{E_{i}} \, \text{and} \, \kappa(Y_{n_{ii}}) \in n_{0}O^{i}_{2}) \geq e^{-n_{0}\alpha_{i}}d_{1}d_{4} \geq e^{-n_{0}(\alpha_{i}+\xi)}
\end{equation} for $i=1,2$.

Observe now that by our initial choice of open sets, we have $O^{1}_{3} \cap O^{2}_{3} = \emptyset $, so that up to taking their intersections, respectively with $\kappa^{-1}(n_{0}O^{1}_{2})$ and $\kappa^{-1}(n_{0}O^{2}_{2})$, we can suppose that $\tilde{E}_{1}$ and $\tilde{E}_{2}$ are disjoint and are such that for $i=1,2$, $\kappa(\tilde{E}_{i}) \subseteq n_{0}O^{i}_{2}$. Now, for all $k_{1},k_{2} \geq 0$ define the collection of subsets $E^{k_{1},k_{2}}$ of $\Gamma$ by
\begin{equation*}
E^{k_{1},k_{2}}=\{\gamma_{1} \ldots \gamma_{k_{1}+k_{2}} \, | \, |\{i \, |\, \gamma_{i} \in 
\tilde{E}_{j}\} |=k_{j} \, \text{for} \, j=1,2 \}
\end{equation*}

Making key use of the fact that $\tilde{E}_{1}\cup \tilde{E}_{2}$ is an $(\theta_{\Gamma},r_{1},\epsilon_{1})$-Schottky family, \ref{SchottkyCartaniterate} implies that for all $k_{1},k_{2} \geq 0$, 
\begin{equation}\label{just10.c}
\kappa(E^{k_{1},k_{2}}) \subset k_{1}(n_{0}O_{2}^{1}+K)+k_{2}(n_{0}O^{2}_{2}+K) \subset k_{1}n_{0}O^{1}_{3}+k_{2}n_{0}O^{2}_{3}
\end{equation} where the last inclusion is due to item (iii) of the choice of $n_{0}$. Hence, for all $k \geq 0$, choosing $k=k_{1}=k_{2}$, since $\frac{O^{1}_{3}+O^{2}_{3}}{2} \subseteq O^{12}_{1}$, it follows from (\ref{just10.c}) that $\kappa(E^{k,k}) \subseteq 2kn_{0}O^{12}_{1}$. Moreover, item (iv) of the choice of $n_{0}$ implies by Lemma \ref{Pvear} that for all $k \geq 0$, we have $2kn_{0} O_{1}^{12} \subseteq k(n_{11}+n_{22})O_{2}^{12}$. 

Consequently, we have the following inclusion of events for each $k \geq 1$:
\begin{equation}\label{just10.d}
\{Y_{kn_{11}+kn_{22}} \in E^{k,k}\} \subset \{\frac{1}{kn_{11}+kn_{22}}\kappa(Y_{kn_{11}+kn_{22}}) \in O^{12}_{2}\} 
\end{equation}
Now, using, respectively, (\ref{just10.d}), independence of random walk increments and (\ref{just10.b}), for all $k \geq 1$, we have
\begin{equation*}
\begin{aligned}
\mathbb{P}(\frac{\kappa(Y_{kn_{11}+kn_{22}})}{kn_{11}+kn_{22}}  \in O_{2}^{12}) & \geq \mathbb{P}(Y_{kn_{11}+kn_{22}} \in E^{k,k}) \\
& \geq  \mathbb{P}(Y_{n_{11}} \in \tilde{E}_{1})^{k} \mathbb{P}(Y_{n_{22}} \in \tilde{E}_{2})^{k} \\ & \geq e^{-kn_{0}(\alpha_{1}+\xi)} e^{-kn_{0}(\alpha_{2}+\xi)} 
\end{aligned}
\end{equation*}

As a result, in the above inequality, taking logarithm, dividing by $k$, it follows that
\begin{equation*}
-\beta (n_{11}+n_{22}) \geq \underset{k \rightarrow \infty}{\limsup} \frac{1}{k} \log \mathbb{P}(\frac{\kappa(Y_{k(n_{11}+n_{22})})}{k(n_{11}+n_{22})}  \in O_{2}^{12}) \geq  -2n_{0}(\frac{\alpha_{1}+\alpha_{2}}{2}+\xi)
\end{equation*} where the first inequality is immediate by definition of $\beta$ above.

Finally, dividing this last inequality by $-(n_{11}+n_{22})$, using (\ref{just10.0}), we get $\frac{\alpha_{1}+\alpha_{2}}{2}+2\xi \leq \beta \leq \frac{\alpha_{1}+\alpha_{2}}{2}+\xi$, a contradiction.
\end{proof}

The rest of this section is devoted to completing the proof of Theorem \ref{kanitsav2}. It remains to show that the (full) LDP holds under a finite exponential moment condition and that we can give an alternative expression for the rate function under a strong exponential moment condition.
 
\subsection{Existence of (full) LDP under exponential moment condition} 

The following classical notion of large deviations theory enables one to formulate a sufficient condition (see Lemma \ref{corolLDP}) to strengthen a weak LDP to an LDP with proper rate function:
 
\begin{definition}\label{defexptight} A sequence of random variables $Z_{n}$ on a topological space $X$ is said to be exponentially tight, if for all $\alpha  \in \mathbb{R}$, there exists a compact set $K_{\alpha} \subset X $ such that $\underset{n \rightarrow \infty}{\limsup} \frac{1}{n} \log \mathbb{P}(Z_{n} \in K_{\alpha}^{c})<-\alpha$.
\end{definition}

The following lemma (see \cite{Dembo.Zeitouni}) explains the interest of this notion:

\begin{lemma} \label{corolLDP} If an exponentially tight sequence of random variables on $X$ satisfies a weak LDP with a rate function $I$, then it satisfies a (full) LDP with a proper rate function $I$.
\end{lemma}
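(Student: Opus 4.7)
The plan is to deduce the full upper bound on closed sets from the weak upper bound on compacts, using exponential tightness to control the tails, and to deduce properness of $I$ from exponential tightness together with the characterization of $I$ as a supremum over open neighborhoods (as in Theorem \ref{existLDP}).

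For the upper bound on a closed set $F \subset X$, my plan is the following splitting argument. Fix $\alpha > 0$ and let $K_{\alpha}$ be the compact set given by Definition \ref{defexptight}. Write
\begin{equation*}
\mathbb{P}(Z_{n} \in F) \leq \mathbb{P}(Z_{n} \in F \cap K_{\alpha}) + \mathbb{P}(Z_{n} \in K_{\alpha}^{c}).
\end{equation*}
The set $F \cap K_{\alpha}$ is compact, so the weak LDP gives
\begin{equation*}
\limsup_{n} \tfrac{1}{n} \log \mathbb{P}(Z_{n} \in F \cap K_{\alpha}) \leq -\inf_{x \in F \cap K_{\alpha}} I(x) \leq -\inf_{x \in F} I(x),
\end{equation*}
while exponential tightness gives $\limsup_{n} \tfrac{1}{n}\log \mathbb{P}(Z_{n} \in K_{\alpha}^{c}) < -\alpha$. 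Using the elementary fact that $\limsup \tfrac{1}{n}\log(a_{n}+b_{n})$ equals the maximum of the individual $\limsup$s, I conclude that
\begin{equation*}
\limsup_{n} \tfrac{1}{n}\log \mathbb{P}(Z_{n} \in F) \leq \max\bigl(-\inf_{x \in F} I(x),\,-\alpha\bigr),
\end{equation*}
and letting $\alpha \to \infty$ yields the full upper bound. The lower bound on open sets is already provided by the weak LDP, so this would complete the LDP.

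It remains to check that $I$ is proper, i.e. that every level set $\Psi_{\alpha} := \{x \in X \mid I(x) \leq \alpha\}$ is compact. Since $I$ is the supremum of lower-semicontinuous functions (by its defining formula in Theorem \ref{existLDP}), $I$ is itself lower semicontinuous, so $\Psi_{\alpha}$ is closed. To show $\Psi_{\alpha}$ is contained in a compact set, I would pick the compact $K_{\alpha+1}$ from exponential tightness and argue that any $x \notin K_{\alpha+1}$ satisfies $I(x) > \alpha$: indeed, $X \setminus K_{\alpha+1}$ is an open neighborhood of $x$, so by the sup definition
\begin{equation*}
I(x) \geq -\limsup_{n} \tfrac{1}{n}\log \mathbb{P}(Z_{n} \in X \setminus K_{\alpha+1}) > \alpha+1 > \alpha.
\end{equation*}
Hence $\Psi_{\alpha} \subset K_{\alpha+1}$, and being closed inside a compact it is itself compact.

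The proof is essentially a standard packaging argument from the Dembo-Zeitouni toolbox and I do not anticipate any real obstacle; the only mild subtlety is making sure that the formula for $I$ used in the properness argument is the one coming from Theorem \ref{existLDP} (namely $I(x) = \sup_{O \ni x} -\limsup \tfrac{1}{n}\log \mu_{n}(O)$), which is automatic under the hypothesis that a weak LDP with rate function $I$ holds.
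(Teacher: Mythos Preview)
The paper does not actually prove this lemma; it merely cites Dembo--Zeitouni. Your argument is the standard one and is correct. One small cleanup: for properness you do not need to invoke the formula from Theorem~\ref{existLDP} (which, as stated, requires the ambient space to be Polish via Remark~\ref{weak.LDP.implies.remark}); it is more direct to use the weak LDP lower bound on the open set $K_{\alpha+1}^{c}$, which gives $-\inf_{y \in K_{\alpha+1}^{c}} I(y) \leq \liminf_{n}\tfrac{1}{n}\log \mathbb{P}(Z_{n}\in K_{\alpha+1}^{c}) < -(\alpha+1)$ and hence $I(x)>\alpha$ for every $x\notin K_{\alpha+1}$.
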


In view of this lemma, to prove the existence of an LDP with a proper rate function in Theorem \ref{kanitsav2}, we only need to show that a finite exponential moment condition on $\mu$ implies that the sequence $\frac{1}{n}\kappa(Y_{n})$ of random variables is exponentially tight. This is done in the following proposition. 

Recall that a probability measure $\mu$ on $G$ is said to have a finite exponential moment if there exists $c>0$ such that $\int \exp(c||\kappa(g)||) < \infty$. For convenience, we endow $\mathfrak{a}$ with the $\mathit{l}^{\infty}$-norm for the dual basis of the characters $\overline{\chi}_{i}$ for $i=1,\ldots,d$, where these latters are as in the paragraph following Lemma \ref{rrep} (namely, for $i=1,\ldots,d_{S}$, $\chi_{i}$'s defined by this lemma and for $i=d_{S}+1,\ldots,d$, the central characters $\chi_{i} \in X_{C}$ are  defined in paragraph 2.9.1.). Note that by Lemma \ref{Cartan.norm.correspondance} and submultiplicativity of an associated operator norm, this norm satisfies the subadditive property $||\kappa(gh)||\leq ||\kappa(g)||+||\kappa(h)||$ for all $g,h \in G$. We have:

\begin{proposition} \label{expmoment.implies.exptight}
If $\mu$ has a finite exponential moment, then the sequence random variables $\frac{1}{n}\kappa(Y_{n})$ is exponentially tight.
\end{proposition}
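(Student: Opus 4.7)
The plan is to derive exponential tightness from a direct Chernoff–Markov bound, exploiting the subadditivity $\|\kappa(gh)\| \leq \|\kappa(g)\| + \|\kappa(h)\|$ recalled just before the proposition. Because the norm on $\mathfrak{a}$ chosen there is subadditive on $\kappa$, the random variable $\|\kappa(Y_n)\|$ is dominated pointwise by the sum $\sum_{i=1}^{n}\|\kappa(X_i)\|$ of \emph{independent} nonnegative real random variables with common distribution, namely the push-forward of $\mu$ by $g \mapsto \|\kappa(g)\|$. This reduces the problem to a one-dimensional iid sum, where classical Cram\'er-type estimates apply.

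Let $c > 0$ be such that $M := \int_G e^{c\|\kappa(g)\|}\,\mu(dg) < \infty$, which exists by the finite exponential moment assumption. For any $R > 0$, Markov's inequality together with independence gives
\begin{equation*}
\mathbb{P}\bigl(\|\kappa(Y_n)\| > nR\bigr)
\leq \mathbb{P}\Bigl(\textstyle\sum_{i=1}^{n}\|\kappa(X_i)\| > nR\Bigr)
\leq e^{-cnR}\,\mathbb{E}\Bigl[e^{c\sum_{i=1}^n \|\kappa(X_i)\|}\Bigr]
= e^{-cnR}\, M^{n},
\end{equation*}
so that
\begin{equation*}
\limsup_{n\to\infty}\frac{1}{n}\log \mathbb{P}\bigl(\tfrac{1}{n}\|\kappa(Y_n)\| > R\bigr) \leq -cR + \log M.
\end{equation*}

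Given $\alpha \in \mathbb{R}$, choose $R_\alpha > (\alpha + \log M)/c$, and set
\begin{equation*}
K_\alpha := \{x \in \mathfrak{a}^{+}\,|\,\|x\| \leq R_\alpha\}.
\end{equation*}
Since $\mathfrak{a}$ is a finite-dimensional real vector space, $K_\alpha$ is compact, and the event $\{\tfrac{1}{n}\kappa(Y_n) \in K_\alpha^{c}\}$ coincides with $\{\|\kappa(Y_n)\| > nR_\alpha\}$. The estimate above then yields $\limsup_{n}\frac{1}{n}\log\mathbb{P}(\tfrac{1}{n}\kappa(Y_n)\in K_\alpha^{c}) < -\alpha$, proving exponential tightness.

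There is essentially no obstacle here: the only input beyond the elementary Chernoff argument is the subadditivity $\|\kappa(gh)\|\leq \|\kappa(g)\|+\|\kappa(h)\|$, which was recorded just before the statement as a consequence of Lemma \ref{Cartan.norm.correspondance} and submultiplicativity of the operator norms in the distinguished representations. Combined with the previously established weak LDP (Theorem \ref{kanitsav1}) and Lemma \ref{corolLDP}, this proposition will upgrade the weak LDP to a full LDP with proper convex rate function, completing the first half of Theorem \ref{kanitsav2}.
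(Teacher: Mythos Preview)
Your proof is correct and essentially identical to the paper's: both use the Chernoff/Markov bound together with the subadditivity $\|\kappa(gh)\|\leq\|\kappa(g)\|+\|\kappa(h)\|$ and independence of the increments to obtain $\limsup_{n}\frac{1}{n}\log\mathbb{P}(\tfrac{1}{n}\|\kappa(Y_n)\|>R)\leq -cR+\log M$, from which exponential tightness follows by choosing $R$ large. The only cosmetic difference is that the paper bounds $\mathbb{E}[e^{s_0\|\kappa(Y_n)\|}]$ by $(\mathbb{E}[e^{s_0\|\kappa(X_1)\|}])^n$ directly, whereas you first apply subadditivity pointwise before taking expectations; the estimates are the same.
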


\begin{proof}
In view of the above discussion, we only need to show that 
\begin{equation*}
\lim_{t \rightarrow \infty} \limsup_{n \rightarrow \infty} \frac{1}{n} \log \mathbb{P}(\frac{1}{n}||\kappa(Y_{n})||\geq t)=-\infty
\end{equation*}
By Chebyshev inequality, for every $s \geq 0$, we have
\begin{equation*}
\mathbb{P}( ||\kappa(Y_{n})|| \geq tn) \leq \mathbb{E}[e^{s ||\kappa(Y_{n})||}] e^{-stn}
\end{equation*} In this inequality, taking $\log$, dividing by $n$ and specializing to some $s_{0} \in  \mathbb{R}$ such that $c \geq s_{0}>0$, we get
\begin{equation*}
\frac{1}{n} \log \mathbb{P}(||\kappa(Y_{n})|| \geq tn) \leq -(s_{0}t - \frac{1}{n}\log \mathbb{E}[e^{s_{0} ||\kappa(Y_{n})||}])
\end{equation*} 
On the other hand, it follows by the independence of random walk increments and the subadditivity of $||.||$ that for all $n  \geq 1$, we have $\frac{1}{n} \log \mathbb{E}[e^{s_{0} ||\kappa(Y_{n})||}] \leq \log \mathbb{E}[e^{s_{0} ||\kappa(X_{1})||}]$. Therefore, we have $$
\limsup_{n \rightarrow \infty} \frac{1}{n} \log \mathbb{P}(\frac{1}{n}||\kappa(Y_{n})||\geq t) \leq -(s_{0}t-\mathbb{E}[e^{s_{0}||\kappa(X_{1})||}])$$ Since $\mathbb{E}[e^{s_{0}||\kappa(X_{1})||}]$ is finite by the exponential moment condition and the choice of $s_{0}>0$, the result follows by taking limit in both sides as $t$ goes to $+\infty$.
\end{proof}

\subsection{Identification of the rate function}

In this last part of this section, under a strong exponential moment condition (see below), we give an alternative expression for the rate function $I$ as the Legendre transform of a limit Laplace transform of the distributions of $\frac{1}{n}\kappa(Y_{n})$. For this, we follow a standard path in large deviations theory using the Fenchel-Moreau duality and Varadhan's integral lemma.

Define the limit Laplace transform of $\frac{1}{n}\kappa(Y_{n})$ as $\Lambda: \mathfrak{a}^{\ast} \to \overline{\mathbb{R}}$ as 
\begin{equation*}
\Lambda(\lambda) = \limsup_{n \rightarrow \infty}\frac{1}{n}\log \mathbb{E}[e^{\lambda(\kappa(Y_{n}))}]
\end{equation*} We note in passing that nice properties (e.g. differentiability, steepness) of this function have implications for LDP (e.g. G\"{a}rtner-Ellis theorem). For a recent, analytic approach to the study of this function, see Guivarc'h-Le Page \cite{Guivarch.LePage}. In the next lemma, we write a straightforward observation on the locus of finiteness of $\Lambda$. Below, for a $\lambda \in \mathfrak{a}^{\ast}$, $||\lambda||_{1}$ denotes its $\mathit{l}^{1}$-norm in the basis $(\overline{\chi}_{i})_{i=1,\ldots,d}$ of $\mathfrak{a}^{\ast}$, and for convenience, we use the same norm $||.||$ on $\mathfrak{a}$ as in the proof of Proposition \ref{expmoment.implies.exptight}.

\begin{lemma}\label{expmomentrelator}
Let $\mu$ be a probability measure of finite exponential moment on $G$. Accordingly, let $c>0$ be such that $\int e^{c||\kappa(g)||}\mu(dg)<\infty$. Then,
\begin{equation*}
D_{\Lambda}:=\{\lambda \in \mathfrak{a}^{\ast} \, | \, \Lambda(\lambda) \in \mathbb{R} \} \supset \{\lambda \in \mathfrak{a}^{\ast} \, | \, ||\lambda||_{1} \leq c \}
\end{equation*}
\end{lemma}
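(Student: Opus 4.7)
The plan is to combine three ingredients: the duality between the $\ell^1$-norm on $\mathfrak{a}^*$ and the $\ell^\infty$-norm on $\mathfrak{a}$, the subadditivity of $\|\kappa(\cdot)\|$ established just before the statement, and the independence of the random walk increments.

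First I would observe that, since $\|\cdot\|$ on $\mathfrak{a}$ is chosen to be the $\ell^\infty$-norm in the dual basis to $(\overline{\chi}_i)_{i=1,\ldots,d}$, the $\ell^1$-norm $\|\cdot\|_1$ on $\mathfrak{a}^*$ in the basis $(\overline{\chi}_i)$ is precisely the dual norm. Hence for every $\lambda \in \mathfrak{a}^*$ and $x \in \mathfrak{a}$ one has $|\lambda(x)| \leq \|\lambda\|_1\,\|x\|$. In particular, if $\|\lambda\|_1 \leq c$, then for all $n \geq 1$
\begin{equation*}
\mathbb{E}[e^{\lambda(\kappa(Y_n))}] \leq \mathbb{E}[e^{c\|\kappa(Y_n)\|}].
\end{equation*}

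Next I would invoke the subadditivity property $\|\kappa(gh)\| \leq \|\kappa(g)\| + \|\kappa(h)\|$ noted in the paragraph before the lemma, which applied iteratively to $Y_n = X_n \ldots X_1$ yields $\|\kappa(Y_n)\| \leq \sum_{i=1}^n \|\kappa(X_i)\|$. Combining this with the independence of the $X_i$'s (which are all distributed as $\mu$) gives
\begin{equation*}
\mathbb{E}[e^{c\|\kappa(Y_n)\|}] \leq \mathbb{E}\Bigl[e^{c\sum_{i=1}^n \|\kappa(X_i)\|}\Bigr] = \Bigl(\mathbb{E}[e^{c\|\kappa(X_1)\|}]\Bigr)^n = \Bigl(\textstyle\int e^{c\|\kappa(g)\|}\mu(dg)\Bigr)^n.
\end{equation*}

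Finally, taking logarithms, dividing by $n$ and passing to the limit superior, I would conclude
\begin{equation*}
\Lambda(\lambda) \leq \log \int e^{c\|\kappa(g)\|}\mu(dg) < \infty,
\end{equation*}
where finiteness is exactly the finite exponential moment hypothesis. The lower bound $\Lambda(\lambda) > -\infty$ is immediate from Jensen's inequality applied to the convex function $e^{(\cdot)}$, giving $\Lambda(\lambda) \geq \limsup \lambda(\mathbb{E}[\kappa(Y_n)])/n$, or alternatively from the trivial bound $\mathbb{E}[e^{\lambda(\kappa(Y_n))}] \geq e^{\lambda(0)}\mathbb{P}(Y_n \in \text{some fixed compact})$. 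There is no real obstacle here: the statement is essentially a routine consequence of Hölder-type duality and the standard tensorisation of exponential moments for sums of i.i.d.\ random variables, transported to the matrix setting via the subadditivity of the Cartan projection in the chosen norm.
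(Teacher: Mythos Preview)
Your proof is correct and follows essentially the same approach as the paper: both use the $\ell^1$--$\ell^\infty$ duality to bound $|\lambda(\kappa(Y_n))|$ by $\|\lambda\|_1\|\kappa(Y_n)\|$, then invoke subadditivity of $\|\kappa(\cdot)\|$ together with independence of the increments to control $\frac{1}{n}\log\mathbb{E}[e^{\lambda(\kappa(Y_n))}]$ by the one-step exponential moment. The only minor difference is in the lower bound: the paper observes that $n\mapsto\log\mathbb{E}[e^{-\|\lambda\|_1\|\kappa(Y_n)\|}]$ is superadditive (for the same reason the upper sequence is subadditive) and hence bounded below by $\log\mathbb{E}[e^{-\|\lambda\|_1\|\kappa(X_1)\|}]>-\infty$, whereas you invoke Jensen's inequality; both arguments are valid and of comparable simplicity, though your throwaway ``alternative'' with a fixed compact is too vague as stated and should be dropped.
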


\begin{proof}
By definition of the norm $||.||$ on $\mathfrak{a}$, for all $t \in \mathbb{R}$ and $i=1,\ldots, d$, we have 
$$
\log \mathbb{E}[e^{-|t| ||\kappa(Y_{n})||}] \leq \log \mathbb{E}[e^{t.\overline{\chi}_{i}(\kappa(Y_{n}))}] \leq \log \mathbb{E}[e^{|t|||\kappa(Y_{n})||}]
$$ where $Y_{n}=X_{n}.\ldots.X_{1}$ is as usual the $\mu$-random walk.

Using this and the fact that the sequence on the right hand side is subadditive and the one on the left hand side is superadditive, one deduces that for $\lambda=\sum_{i=1}^{d}\lambda_{i}\overline{\chi}_{i}$, we have 
$$
\log \mathbb{E}[e^{-||\lambda||_{1}||\kappa(X_{1})||}] \leq \frac{1}{n}\log \mathbb{E}[e^{\lambda(\kappa(Y_{n}))}] \leq \log \mathbb{E}[e^{||\lambda||_{1}||\kappa(X_{1})||}]
$$ The result follows by the exponential moment hypothesis in the statement of the lemma.
\end{proof}

We now complete the

\begin{proof}[Proof of Theorem \ref{kanitsav2} (Identification of the rate function)]
It follows from Lemma \ref{expmomentrelator} that if $\mu$ has a strong exponential moment, then for all $\lambda \in \mathfrak{a}^{\ast}$, $\Lambda(\lambda)<\infty$. Then, it follows from Varadhan's integral lemma (see \cite{Dembo.Zeitouni} section 4.3) that in fact for all $\lambda \in \mathfrak{a}^{\ast}$, one has
\begin{equation*}
\Lambda(\lambda)=\lim_{n} \frac{1}{n} \log \mathbb{E}[e^{\lambda(\kappa(Y_{n}))}] =\underset{ x \in \mathfrak{a}}{\sup}(<\lambda , x>-I(x))
\end{equation*} where $I$ is the proper rate function of the LDP.

Now, for a function $f$ on $\mathfrak{a}$, denote its convex conjugate (Legendre tranform) on $\mathfrak{a}^{\ast}$ by $f^{\ast}(.)$, where $f^{\ast}(\lambda):= \sup_{x \in \mathfrak{a}}(<\lambda,x>-f(x)) $. The above conclusion of Varadhan's integral lemma hence reads as $\Lambda(\lambda)=I^{\ast}(\lambda)$. Now, since $I$ is a convex rate function, Fenchel-Moreau duality tells us that $I(x)=I^{\ast \ast}(x)=\Lambda^{\ast}(x)$, identifying $I(x)$ with $\Lambda^{\ast}(x)$ and completing the proof.
\end{proof}

By the expression of $I$ given by this identification, one gets an information on the shape of the rate function (which is non-trivial if the support of $\mu$ is unbounded):
\begin{corollary}
We have $\lim_{x \to \infty} \frac{I(x)}{||x||}=+\infty$. \qed
\end{corollary}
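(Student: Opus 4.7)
The plan is to read the conclusion straight off the Legendre-transform representation $I(x)=\Lambda^{\ast}(x)=\sup_{\lambda\in\mathfrak{a}^{\ast}}(\langle\lambda,x\rangle-\Lambda(\lambda))$ that was just established in the proof of Theorem \ref{kanitsav2}, using the fact that the strong exponential moment controls $\Lambda$ uniformly on bounded subsets of $\mathfrak{a}^{\ast}$.

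First I would note that, with the norm $\|\cdot\|$ on $\mathfrak{a}$ fixed in the proof of Proposition \ref{expmoment.implies.exptight} (the $\ell^{\infty}$-norm in the dual basis of $(\overline{\chi}_{i})_{i=1,\ldots,d}$), the dual norm on $\mathfrak{a}^{\ast}$ is precisely the $\ell^{1}$-norm $\|\cdot\|_{1}$ appearing in Lemma \ref{expmomentrelator}. The upper inequality in that lemma in fact gives the pointwise bound $\Lambda(\lambda)\leq \log\mathbb{E}[e^{\|\lambda\|_{1}\|\kappa(X_{1})\|}]$ for every $\lambda\in\mathfrak{a}^{\ast}$, and under the strong exponential moment assumption this bound is finite for every value of $\|\lambda\|_{1}$. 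Consequently, for each $R>0$ the quantity
\[
C(R):=\sup_{\|\lambda\|_{1}\leq R}\Lambda(\lambda)\leq \log\mathbb{E}[e^{R\|\kappa(X_{1})\|}]
\]
is finite.

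Next, for any fixed $R>0$, I would restrict the supremum in the Legendre transform to the ball $\{\|\lambda\|_{1}\leq R\}$ and use the duality of norms $\sup_{\|\lambda\|_{1}\leq R}\langle\lambda,x\rangle=R\|x\|$ to obtain
\[
I(x)\;\geq\;\sup_{\|\lambda\|_{1}\leq R}\bigl(\langle\lambda,x\rangle-\Lambda(\lambda)\bigr)\;\geq\;R\|x\|-C(R).
\]
Dividing by $\|x\|$ and sending $\|x\|\to\infty$ yields $\liminf_{x\to\infty}I(x)/\|x\|\geq R$, and since $R>0$ is arbitrary the limit equals $+\infty$. The equivalence of norms on the finite-dimensional space $\mathfrak{a}$ then gives the same conclusion for any norm.

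There is no real obstacle in this argument: every ingredient has already been set up. The only substantive input beyond the definition of Legendre duality is the uniform bound $C(R)<\infty$ on balls, which is immediate from the estimate in Lemma \ref{expmomentrelator} once all exponential moments of $\mu$ are finite. Thus the proof is little more than unwinding the Legendre-transform formula, together with the observation that finiteness of $\Lambda$ on all of $\mathfrak{a}^{\ast}$ forces $I$ to be \emph{superlinear}.
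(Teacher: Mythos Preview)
Your argument is correct and is exactly the intended one: the paper's proof is merely a \qed, the point being that the corollary is immediate from the identification $I=\Lambda^{\ast}$ together with the finiteness of $\Lambda$ on all of $\mathfrak{a}^{\ast}$ established via Lemma \ref{expmomentrelator}. You have simply spelled out the standard fact that the Legendre transform of a function which is finite (and bounded on bounded sets) everywhere is superlinear, which is precisely what the paper leaves implicit.
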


\section{Support of the rate function}\label{section5}

The aim of this section is to prove a more precise version of Theorem \ref{intro.kanitsav3}. 

Recall that if $G$ is the group of $\rm k$-points of a connected reductive algebraic group $\mathbf{G}$ defined over a local field $\rm k$, and $S$ is a bounded subset of $G$ generating a Zariski dense semigroup in $G$, then \textit{the joint spectrum of} $S$, denoted $J(S)$, is the Hausdorff limit of both of the sequences $\frac{1}{n}\kappa(S^{n})$ and $\frac{1}{n} \lambda(S^{n})$(\cite{Breuillard.Sert.joint.spectrum}). This is a compact, convex subset of $\mathfrak{a}^{+}$. If $\rm k=\mathbb{R}$, then the minimal affine subspace of $\mathfrak{a}$ containing $J(S)$ also contains an affine copy of $\mathfrak{a}_{S}$. In particular, when $\rm k=\mathbb{R}$, if $\mathbf{G}$ is semisimple, $J(S)$ is a convex body in $\mathfrak{a}$, and if $\mathbf{G}$ reductive and $S$ is symmetric (i.e. $S=S^{-1}:=\{g^{-1}\, |\, g \in S\}$), then $J(S) \cap \mathfrak{a}_{S}$ is of non-empty interior in $\mathfrak{a}_{S}$. In the below statement, $\inte()$ denotes the interior, and $\ri()$ denotes the relative interior of a set, i.e. its interior in the affine hull of this set. With these definitions, our result reads

\begin{theorem}\label{kanitsav3}
Let $\rm k$ be a local field and $G$ be the group of $\rm k$-points of a connected reductive algebraic group $\mathbf{G}$ defined over $\rm k$. Let $\mu$ be a probability measure on $G$, whose support $S$ generates a Zariski-dense semigroup in $G$. Then,\\[3pt]
1. The effective support $D_{I}=\{x \in \mathfrak{a} \, | \, I(x)<\infty\}$ is a convex subset of $\mathfrak{a}$. If $\mathbf{G}$ is semisimple and $\rm k=\mathbb{R}$, it is of non-empty interior, and $\vec{\lambda}_{\mu} \in \inte(D_{I})$ if moreover $\mu$ has a finite second moment. \\[3pt]
2. If $S$ is a bounded subset of $G$, then $\overline{D}_{I}=J(S)$ and $\ri(D_{I})=\ri(J(S))$.\\[3pt]
3. If $S$ is a finite subset of $G$, then $D_{I}=J(S)$ and $I$ is bounded above by $-\min_{g \in S}\log \mu(g)$ on $\overline{D}_{I}$.\\[3pt]
In any case, $I$ is locally Lipschitz (in particular continuous) on the relative interior of $D_{I}$.
\end{theorem}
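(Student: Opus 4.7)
The plan is to treat the containments defining $D_I$ in a natural order, extracting the general case from the bounded-support case by a truncation argument. The first observation is that convexity of $D_I$ is immediate from convexity of $I$ (Theorem \ref{kanitsav1}), and local Lipschitz continuity of $I$ on $\ri(D_I)$ then follows from the standard fact that a convex function finite on a convex set in a finite-dimensional real vector space is locally Lipschitz on its relative interior. For the inclusion $\overline{D}_I \subseteq J(S)$ when $S$ is bounded, I would note that $I(x)<\infty$ forces every open $O\ni x$ to satisfy $\mathbb{P}(\tfrac{1}{n}\kappa(Y_n)\in O)>0$ for infinitely many $n$, so $O$ must meet $\tfrac{1}{n}\kappa(S^n)$; combined with the Hausdorff convergence $\tfrac{1}{n}\kappa(S^n)\to J(S)$ from \cite{Breuillard.Sert.joint.spectrum}, this forces $x\in J(S)$.

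The heart of the proof is the reverse containment $\ri(J(S))\subseteq D_I$ for bounded $S$. Given $y\in \ri(J(S))$, the joint spectrum theorem also provides Jordan approximation: there exist $n_0$ arbitrarily large and $g_0\in S^{n_0}$ with $\tfrac{1}{n_0}\lambda(g_0)$ close to $y$. A first technical step is to ensure that $g_0$ is $(\theta_\Gamma,r,\epsilon)$-proximal: using Theorem \ref{AMS} one multiplies $g_0$ by a fixed $f\in \bigcup_{k\leq k_0}\Supp(\mu^{\ast k})$ to achieve this, which for $n_0$ large perturbs $\tfrac{1}{n_0}\lambda(g_0)$ arbitrarily little (by Corollary \ref{Cartan.stability} and Proposition \ref{loxodromy.implies.Cartan.close.to.Jordan}). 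Theorem \ref{Best} applied with $l=1$ then secures $(\theta_\Gamma,2r,2\epsilon)$-proximality of all powers $g_0^k$ and, together with the automatic equality $\lambda(g_0^k)=k\lambda(g_0)$ and Proposition \ref{loxodromy.implies.Cartan.close.to.Jordan}, yields $\kappa(g_0^k)-k\lambda(g_0)$ bounded uniformly in $k$, so $\tfrac{1}{kn_0}\kappa(g_0^k)\to y$. Writing $g_0=s_1\cdots s_{n_0}$, for suitably small neighbourhoods $V_{s_i}\ni s_i$ (collapsing to $\{s_i\}$ when $S$ is finite) independence gives $\mathbb{P}(Y_{kn_0}\in (V_{s_1}\cdots V_{s_{n_0}})^k)\geq \prod_{i=1}^{n_0}\mu(V_{s_i})^k$, while the Schottky machinery around $g_0$ (Proposition \ref{SchottkyCartaniterate}, applied to the Schottky family consisting of a small neighbourhood of $g_0$) ensures that the Cartan projection of this set lies in $B(kn_0 y,kn_0\epsilon)$. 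Lemma \ref{controlling.perturbations.uptoepsilon} then upgrades the resulting subsequence estimate along $m=kn_0$ to all $m$, yielding $I(y)<\infty$ with an explicit exponential upper bound. The equalities $\overline{D}_I=J(S)$ and $\ri(D_I)=\ri(J(S))$ follow from the standard principle that if $\ri(B)\subseteq A\subseteq B$ for convex $A,B$, then $A$ and $B$ share the same closure and same relative interior. The refinement $D_I=J(S)$ with uniform upper bound $I\leq -\min_{g\in S}\log\mu(g)$ for finite $S$ is the same argument specialized, followed by lower semicontinuity of $I$ to extend the bound from $\ri(J(S))$ to $J(S)$.

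For the general (possibly unbounded) case of Part 1, I would truncate to a bounded $S_1\subseteq \Supp(\mu)$ with $\mu(S_1)>0$ whose semigroup is Zariski dense in $G$; existence follows because the Zariski closure of the semigroup generated by $\Supp(\mu)\cap\overline{B}(e,n)$ is a non-decreasing sequence of algebraic subgroups of $\mathbf{G}$, so by Noetherianity it stabilizes to $\mathbf{G}$ at some finite $n$. Setting $\mu_1=\mu(\cdot\cap S_1)/\mu(S_1)$, the comparison $\mathbb{P}_\mu(Y_n\in A)\geq \mu(S_1)^n\mathbb{P}_{\mu_1}(Y_n\in A)$ yields $I_\mu\leq I_{\mu_1}-\log\mu(S_1)$, whence $D_{I_\mu}\supseteq D_{I_{\mu_1}}$. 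In the semisimple real case $J(S_1)$ is a convex body, so by what was just proved $D_{I_{\mu_1}}$, and hence $D_{I_\mu}$, has non-empty interior. For the statement $\vec\lambda_\mu\in \inte(D_I)$ under a finite second moment, I would truncate as above, using finite first moment to make $\vec\lambda_{\mu_1}$ arbitrarily close to $\vec\lambda_\mu$, and then show that the non-degenerate Benoist--Quint CLT for $\mu_1$ places $\vec\lambda_{\mu_1}$ in $\inte(J(S_1))$: a supporting linear functional $\varphi$ at a boundary point of $J(S_1)$ would give a one-sided control $\varphi(\kappa(Y_n)-n\vec\lambda_{\mu_1})\leq o(n)$ almost surely, to be reconciled with the two-sided non-degenerate Gaussian fluctuations of order $\sqrt n$ supplied by the CLT.

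The main obstacle will be the second paragraph---converting the deterministic Jordan approximation of $y$ into a genuine exponential lower bound on $\mathbb{P}(\tfrac{1}{n}\kappa(Y_n)\in B(y,\epsilon))$. The crucial inputs are Theorem \ref{Best}, which turns the lack of additivity of $\kappa$ and $\lambda$ into a bounded error for proximal products, and the AMS refinement that lets us proximalize $g_0$ with only a uniformly bounded number of extra steps. A secondary difficulty is the CLT-based argument for $\vec\lambda_\mu$ being interior: since second moment is too weak for direct Cram\'{e}r-type estimates, one has to exploit non-degeneracy qualitatively against the LDP upper bound.
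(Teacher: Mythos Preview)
Your approach is broadly correct and parallel to the paper's, with one minor slip and one place where the paper is considerably simpler.

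The slip is in Part 2: you approximate $y$ via $\tfrac{1}{n_0}\lambda(g_0)$ and then proximalize to $g_0f$, but the tools you cite only relate $\lambda(g_0f)$ to $\kappa(g_0f)$ to $\kappa(g_0)$ --- there is no control on $\kappa(g_0)$ versus $\lambda(g_0)$ \emph{before} proximalization. The paper (Proposition \ref{effective.support.proposition}) simply approximates via $\tfrac{1}{n_0}\kappa(g_0)$ instead, which is equally available from \cite{Breuillard.Sert.joint.spectrum} and avoids the issue. Beyond that, the paper uses Corollary \ref{ppdaraltma} to extract a Schottky subfamily from the proximal elements hitting the target region rather than hand-building a Schottky neighborhood of a single $g_0$; your variant is a legitimate alternative.

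For Part 3 the paper is much more direct than re-running the Schottky machinery. From the weak LDP, any $x\in D_I$ has $\mathbb{P}(\tfrac{1}{n}\kappa(Y_n)\in O)>0$ for every neighborhood $O\ni x$ and all large $n$, so some word $g_n\cdots g_1\in S^n$ lands in $O$; this alone gives $\mathbb{P}(\tfrac{1}{n}\kappa(Y_n)\in O)\geq(\min_{g\in S}\mu(g))^n$ and hence $I\leq -\log\min_{g\in S}\mu(g)$ on $D_I$ with no proximality input whatsoever. Lower semicontinuity then pushes the bound to $\overline{D}_I=J(S)$, forcing $D_I=J(S)$. Your truncation for Part 1 matches the paper's. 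For $\vec\lambda_\mu\in\inte(D_I)$, the paper also invokes the non-degenerate CLT but does not truncate --- it adapts the bounded-support argument of \cite{Breuillard.Sert.joint.spectrum} directly, combining the CLT with Theorem \ref{AMS} and Proposition \ref{SchottkyCartaniterate}; your truncation route would additionally need a lower bound on $d(\vec\lambda_{\mu_1},\partial J(S_1))$ that survives as $S_1\uparrow S$, which you have not supplied.
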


\begin{remark}
Let $\mu$ be as in the previous theorem and $S$ denote its support (possibly unbounded). Let $\Gamma$ be the semi-group generated by $S$ and $B_{\Gamma}$ be the Benoist limit cone of $\Gamma$ in $\mathfrak{a}^{+}$ (\cite{Benoist2}). Then, our proof of 2. of the previous theorem (see Proposition \ref{effective.support.proposition}) in fact shows that the projective images $\mathbb{P}(D_{I})$ and $\mathbb{P}(B_{\Gamma})$ of $D_{I}$ and $B_{\Gamma}$ have the same interior and the same closure in $\mathbb{P}(\mathfrak{a})$. In particular, the Benoist limit cone is characterized by the support of the rate function $I$ (for any probability distribution $\mu$ of support $S$).
\end{remark}

\begin{remark}\label{effective.support.remark}
Note that 2. of the previous theorem says that the rate function $I$ is finite on the joint spectrum, except possibly on its relative boundary (i.e. $J(S) \setminus \ri(J(S))$). One can easily construct examples of random walks where the corresponding rate function explodes on the boundary (see the example below). Moreover, remark that if $D_{I} \neq J(S)$, we have $Im(I)=[0, \infty]$. Indeed, $0 \in Im(I)$ since $I(\vec{\lambda}_{\mu})=0$ and the fact that $Im(I)$ fills the whole set $[0, \infty]$ then follows by convexity and lower semi-continuity of $I$ using Theorem \ref{kanitsav3}.
\end{remark}

Using the definition of LDP, we obtain the following result as an immediate corollary of the last (continuity) statement of the previous theorem:

\begin{corollary}\label{limit.in.LDP.corollary}
Let $R$ be a subset of $\mathfrak{a}$ such that $\inte(R) \cap J(S) \neq \emptyset$ and $\overline{\inte(R)}=\overline{R}$ (e.g. a convex body). Then, we have $\lim_{n \rightarrow \infty} \frac{1}{n}\log \mathbb{P}(\frac{1}{n}\kappa(Y_{n}) \in R)=-\underset{x\in R}{\inf}I(x)$. \qed
\end{corollary}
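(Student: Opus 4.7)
The plan is to derive the corollary directly from the LDP (Theorem \ref{kanitsav1}) combined with the last continuity assertion of Theorem \ref{kanitsav3}. The LDP yields
\begin{equation*}
-\inf_{x \in \inte R} I(x) \leq \liminf_n \tfrac{1}{n}\log \mathbb P(\tfrac{1}{n}\kappa(Y_n) \in R) \leq \limsup_n \tfrac{1}{n}\log \mathbb P(\tfrac{1}{n}\kappa(Y_n) \in R) \leq -\inf_{x \in \overline R} I(x),
\end{equation*}
and since $\inte R \subseteq R \subseteq \overline R$ gives $\inf_{\inte R} I \geq \inf_R I \geq \inf_{\overline R} I$ for free, it suffices to establish the reverse inequality $\inf_{\inte R} I \leq \inf_{\overline R} I$: the whole chain then collapses and the limit exists and equals $-\inf_R I$ as claimed.

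First I would produce a good reference point $w \in \inte R \cap \ri(D_I)$. The hypothesis $\inte R \cap J(S) \neq \emptyset$, together with $J(S) = \overline{D_I}$ and the fact that $\ri(D_I) = \ri(J(S))$ is non-empty (Theorem \ref{kanitsav3}), yields such a $w$ via the line-segment principle for convex sets: picking any $p \in \inte R \cap J(S)$ and any $q \in \ri(D_I)$, the point $(1-t)p + tq$ lies in $\ri(D_I)$ for every $t \in (0,1]$, and for $t$ close to $0$ it also lies in the open set $\inte R$. Given $\epsilon > 0$, I would then choose $x^{\ast} \in \overline R \cap D_I$ with $I(x^{\ast}) < \inf_{\overline R} I + \epsilon$ (possible since $\inte R \cap D_I \neq \emptyset$ forces the infimum to be finite) and consider the segment $y_t := (1-t)x^{\ast} + tw$ for $t \in (0,1]$. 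The line-segment principle applied to $D_I$ gives $y_t \in \ri(D_I)$, and convexity of $I$ gives $I(y_t) \leq (1-t)I(x^{\ast}) + tI(w) \to I(x^{\ast})$ as $t \to 0^+$.

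In the primary case where $R$ is convex (e.g.\ a convex body), the line-segment principle applied to $R$ places $y_t \in \inte R$ for every $t \in (0,1]$, so $\inf_{\inte R} I \leq \lim_{t \to 0^+} I(y_t) = I(x^{\ast}) < \inf_{\overline R} I + \epsilon$, and letting $\epsilon \to 0$ concludes. The hard part will be the general case under the bare hypothesis $\overline{\inte R} = \overline R$, where $y_t$ may leave $\inte R$. There I would use the estimate $\mathrm{dist}(y_t, \inte R) \leq t|w - x^{\ast}|$ (which follows from $y_t \to x^{\ast} \in \overline{\inte R}$) together with local Lipschitzness of $I$ on $\ri(D_I)$ at $y_t$, quantified via the inclusion $B(y_t, t\rho) \subset D_I$ coming from any $B(w, \rho) \subset D_I$: this gives a ball around $y_t$ of radius at least of order $t$ on which $I$ is controlled, and for $t$ sufficiently small it meets $\inte R$, producing $z \in \inte R$ with $I(z) \leq I(y_t) + \epsilon$. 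This yields $\inf_{\inte R} I \leq \inf_{\overline R} I + 2\epsilon$, and letting $\epsilon \to 0$ completes the argument.
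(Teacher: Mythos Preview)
Your reduction to showing $\inf_{\inte R} I \leq \inf_{\overline R} I$ and your treatment of the convex case are correct, and this is exactly what the paper has in mind when it says the corollary is ``immediate'' from continuity of $I$ on $\ri(D_I)$.

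The sketch for the general case, however, has a genuine quantitative gap. You claim the ball $B(y_t,t\rho)$ meets $\inte R$ ``for $t$ sufficiently small'', but both the radius $t\rho$ and your distance bound $\mathrm{dist}(y_t,\inte R)\leq t|w-x^\ast|$ scale linearly in $t$: whether the ball meets $\inte R$ reduces to whether $|w-x^\ast|<\rho$, which is $t$-independent and need not hold (take $D_I$ a long thin rectangle with $x^\ast$ on a short side and $w$ near the centre). Even when the ball does meet $\inte R$, the Lipschitz constant of $I$ on $B(y_t,\tfrac{t\rho}{2})$ is of order $M/(t\rho)$, so the resulting error $|I(z)-I(y_t)|$ is bounded by roughly $M|w-x^\ast|/\rho$, a fixed constant that you cannot make smaller than $\epsilon$ by shrinking $t$.

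More seriously, for non-convex $R$ the equality $\inf_{\inte R} I=\inf_{\overline R} I$ can simply fail. Take $D_I=J(S)$ a triangle in $\mathfrak{a}^+$ with one edge lying on a wall of the Weyl chamber, $I$ strictly convex with its zero in $\inte(J(S))$, and $R$ the union of the closed half-space bounded by that wall (on the side away from $J(S)$) together with a small closed ball well inside $J(S)$. Then $\overline{\inte R}=\overline R$ and $\inte R\cap J(S)\neq\emptyset$, but $\overline R\cap D_I$ includes the whole edge on the wall (where $I$ may be small), while $\inte R\cap D_I$ is only the small ball (where $I$ is larger). So the route through $\inf_{\inte R} I=\inf_{\overline R} I$ cannot succeed in general; if the corollary holds for such $R$ it must be for a different reason, involving the fact that $\tfrac{1}{n}\kappa(Y_n)$ actually lives in $\mathfrak{a}^+$ and not merely the convex-analytic properties of $I$.
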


\subsection{An example of a rate function exploding on the boundary}
In the following, we exhibit an example of a random matrix products whose large deviation rate function explodes on the boundary of the joint spectrum.

\begin{example}
Let $G=\SL(2,\mathbb{R})$, $U=\begin{pmatrix}
1&1\\&1
\end{pmatrix}$, $L=\begin{pmatrix}
1&\\1&1
\end{pmatrix}$ and for $k \in \mathbb{N}$, set $A_{k}=\begin{pmatrix}
a_{k}&\\&a_{k}^{-1}
\end{pmatrix} $, where $a_{k}=e^{4-\frac{1}{k}}$ and let $\alpha_{k}$ be positive real numbers such that $\sum
\alpha_{k}=1$. Consider the probability measure $\mu=\frac{1}{4}(\delta_{U}+\delta_{L})+\frac{1}{2}\sum_{k \geq 1} \alpha_{k} \delta_{A_{k}}$ on $G$. Its support $S$ is bounded and indeed generates a Zariski dense semigroup in $G$ and hence Theorem \ref{kanitsav2} applies. Let $I$ be the corresponding proper convex rate function for large deviations of the random variables $\frac{1}{n}\log ||Y_{n}||$ where $Y_{n}=X_{n}.\ldots.X_{1}$ is the $\mu$-random walk and $||.||$ some associated operator norm. The joint spectrum $J(S)$ is indeed $[0,4]$, and in particular, by Theorem \ref{kanitsav3}, $\inte(D_{I})\supseteq (0,4)$. Moreover it is obvious that $0 \in D_{I}$. We show $I(4)=\infty$: for $k,n \in \mathbb{N}$, define the random variables $P_{k,n}, Q_{k,n}$ and $R_{k,n}$ as\\
$P_{k,n}:=$ the number of occurrences of elements of $\{U,L,A_{i}\,|\,i< k\}$ in $\{X_{1},\ldots,X_{n}\}$, \\
$Q_{k,n}:=$ the number of occurrences of elements of $\{A_{i}\,|\, k \leq i< 3k\}$ in $\{X_{1},\ldots,X_{n}\}$,\\
$R_{k,n}:=n-P_{k,n}-Q_{k,n}$. \\
Then, for all $k \geq 1$,  one has
$$
\mathbb{P}(\frac{1}{n}\log ||Y_{n}|| \geq 4-\frac{1}{3k})=\sum_{T \in \{P,Q,R\}} \mathbb{P}(\frac{1}{n}\log ||Y_{n}|| \geq 4-\frac{1}{3k}\, \text{and} \, T_{k,n} \geq \frac{n}{3}))
$$
Observe that in this last sum, the term corresponding to $T=P$ is zero by submultiplicativity of the operator norm and the other two terms are asymptotically bounded above by $\frac{n}{3}^{th}$-powers of respectively $c. \sum_{i = k}^{3k} \alpha_{i}$ and $c. \sum_{i \geq 3k}\alpha_{i}$, where $c>0$ is a fixed constant. Since these sums converge to zero, by definition of LDP, this shows that $I(4)=\infty$.
\end{example}

\subsection{Proof of Theorem \ref{kanitsav3}}  The following proposition shows the key first statement of 2. of Theorem \ref{kanitsav3}.

\begin{proposition}\label{effective.support.proposition}
Let $G$, $\mu$ and $S$ be as in 2. of Theorem \ref{kanitsav3}. Then $\overline{D}_{I}=J(S)$.
\end{proposition}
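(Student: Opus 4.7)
My plan is to prove the equality by establishing the two inclusions $\overline{D}_I\subseteq J(S)$ and $J(S)\subseteq\overline{D}_I$ separately, making use of the fact that $J(S)$ is the Hausdorff limit of both $\tfrac1n\kappa(S^n)$ and $\tfrac1n\lambda(S^n)$.

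For $\overline{D}_I\subseteq J(S)$, the key point is that $\mathrm{supp}(\mu^{*n})=\overline{S^n}$, so $\tfrac1n\kappa(Y_n)$ is almost surely contained in $\overline{\tfrac1n\kappa(S^n)}$ by continuity of $\kappa$. Given any $x\notin J(S)$, I fix $\delta>0$ with $B(x,2\delta)\cap J(S)=\emptyset$; Hausdorff convergence of $\tfrac1n\kappa(S^n)$ to the compact set $J(S)$ produces $N$ such that for all $n\geq N$ we have $\tfrac1n\kappa(S^n)\subseteq J(S)+B(0,\delta)$, and hence $\mathbb{P}(\tfrac1n\kappa(Y_n)\in B(x,\delta/2))=0$. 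Applying the (weak) LDP lower bound to the open ball $B(x,\delta/2)$ then forces $I\equiv+\infty$ on this ball, ruling out $x\in\overline{D}_I$.

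For the converse inclusion, since $J(S)$ equals the closure of its relative interior (being a compact convex body in its affine hull) and $\overline{D}_I$ is closed, it suffices to show $\ri(J(S))\subseteq\overline{D}_I$. Fix $x\in\ri(J(S))$ and $\delta>0$. Using the Hausdorff convergence $\tfrac1n\lambda(S^n)\to J(S)$ together with Abels--Margulis--Soifer (Theorem \ref{AMS}) and the dispersion/Schottky constructions of Sections 2--3 (Lemma \ref{eta.dispersion}, Proposition \ref{convexityprop}), I construct, for $m$ sufficiently large, a $(\theta_\Gamma,r,\epsilon)$-Schottky family $\{g_1,\ldots,g_k\}\subseteq S^{m}$ whose normalized Jordan projections $\tfrac1m\lambda(g_i)$ contain $x$ in the relative interior of their convex hull, with each $\tfrac1m\lambda(g_i)$ at distance less than $\delta/2$ from $J(S)$. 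I then choose open neighborhoods $V_i\ni g_i$ small enough that $\mu^{*m}(V_i)>0$ (available because $g_i\in\mathrm{supp}(\mu^{*m})$) and such that every product of elements drawn from the $V_i$'s still satisfies the Schottky-type geometric conditions required for Theorem \ref{Best} and Proposition \ref{SchottkyCartaniterate}.

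Writing $x=\sum_i t_i\cdot\tfrac1m\lambda(g_i)$ and approximating $t_i\approx p_i/N$ with $\sum p_i=N$, I decompose the random walk into $N$ independent length-$m$ blocks $Z_j=X_{jm}\cdots X_{(j-1)m+1}$ with common law $\mu^{*m}$. For any ordering $(i_j)_{j=1}^N$ using each index $i$ exactly $p_i$ times, the event $\{Z_j\in V_{i_j}\text{ for all }j\}$ has probability $\prod_i\mu^{*m}(V_i)^{p_i}>0$; on this event, Proposition \ref{SchottkyCartaniterate} combined with Proposition \ref{loxodromy.implies.Cartan.close.to.Jordan} yields $\tfrac1{Nm}\kappa(Y_{Nm})\in B(x,\delta)$ for all $N$ and $m$ sufficiently large. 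Taking logarithms, dividing by $Nm$ and passing to the limit gives
\[
\liminf_N\tfrac{1}{Nm}\log\mathbb{P}\bigl(\tfrac1{Nm}\kappa(Y_{Nm})\in B(x,\delta)\bigr)\;\geq\;\tfrac{1}{m}\sum_i t_i\log\mu^{*m}(V_i)\;>\;-\infty,
\]
so the LDP lower bound forces $\inf_{B(x,\delta)}I<\infty$, hence $B(x,\delta)\cap D_I\neq\emptyset$; letting $\delta\to 0$ concludes $x\in\overline{D}_I$. The main obstacle is the construction of the approximating Schottky family inside $S^m$ whose Jordan projections surround an arbitrary point of $\ri(J(S))$; this is where I expect the bulk of the technical work to lie, and I anticipate it following by combining the joint-spectrum construction in \cite{Breuillard.Sert.joint.spectrum} with the Abels--Margulis--Soifer and convexity arguments already developed in Sections 2--4.
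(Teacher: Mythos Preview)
Your proof of the inclusion $\overline{D}_I\subseteq J(S)$ is essentially the contrapositive of the paper's argument and is fine.

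For the reverse inclusion $J(S)\subseteq\overline{D}_I$, your approach is correct in principle but substantially more elaborate than necessary. You restrict to $x\in\ri(J(S))$ and aim to build a whole $(\theta_\Gamma,r,\epsilon)$-Schottky family $\{g_1,\ldots,g_k\}\subseteq S^m$ whose normalised Jordan projections \emph{surround} $x$, then realise $x$ as a convex combination by prescribing multiplicities in a long product. This imports the full machinery of the convexity proof (the dispersion Lemma~\ref{eta.dispersion}, Proposition~\ref{convexityprop} iterated over $k$ families), and, as you yourself note, the construction of such a surrounding Schottky family inside a single $S^m$ is where the real work lies and is only sketched.

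The paper avoids all of this. For \emph{any} $x\in J(S)$ (not only relative interior points) and $\delta>0$, the Hausdorff convergence of $\frac{1}{n}\kappa(S^n)$ already gives a \emph{single} element $g_{n_0}\in S^{n_0}$ with $\frac{1}{n_0}\kappa(g_{n_0})\in B(x,\delta/4)$. One then applies Abels--Margulis--Soifer once to make $g_{n_0}f_0$ $(\theta_\Gamma,r,\epsilon)$-proximal, takes a small neighbourhood $U_{n_0}V_{f_0}$ (of positive $\mu^{*n_1}$-mass), and runs Corollary~\ref{ppdaraltma} once to extract a single Schottky family $E_{n_1}\subseteq U_{n_0}V_{f_0}$. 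Since $\kappa(E_{n_1})\subseteq n_0 B(x,\delta/4)+C$, Proposition~\ref{SchottkyCartaniterate} gives $\frac{1}{kn_1}\kappa(E_{n_1}^k)\subseteq B(x,\delta/2)$ for all $k$, and
\[
\mathbb{P}\bigl(\tfrac{1}{kn_1}\kappa(Y_{kn_1})\in B(x,\delta/2)\bigr)\geq \mathbb{P}(Y_{n_1}\in E_{n_1})^k\geq (d_3\beta_0\,\mu^{*i_0}(V_{f_0}))^k>0,
\]
which yields $\inf_{\overline{B(x,\delta/2)}}I<\infty$ directly. No convex combination, no surrounding simplex, no merging of several Schottky families is needed: a single element near $x$ suffices because iterating a Schottky family keeps the normalised Cartan projection near where it started. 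Your route would eventually work, but the paper's is both shorter and handles boundary points of $J(S)$ without the preliminary reduction to $\ri(J(S))$.
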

\begin{proof} We first show $\overline{D}_{I} \subseteq J(S)$. Since $J(S)$ is closed by definition, we show $D_{I} \subseteq J(S)$. Let $x \in D_{I}$ and $O_{x}$ be a neighbourhood of $x$ in $\mathfrak{a}$. Then, by Theorem \ref{kanitsav1}, the LDP inequality implies that
\begin{equation*}
- \liminf_{n \rightarrow \infty} \frac{1}{n} \log \mathbb{P}(\frac{1}{n} \kappa(Y_{n}) \in O_{x}) \leq \underset{y \in O_{x}}{\inf} I(y) \leq I(x)< \infty
\end{equation*} In particular, for all $n \in \mathbb{N}$ large enough, $\mathbb{P}(\frac{1}{n}  \kappa (Y_{n}) \in O_{x}) > 0$, implying that for all $n$ large enough, $\frac{1}{n}\kappa(S^{n}) \cap O_{x} \ne \emptyset$. By definition of $J(S)$, since $O_{x}$ is arbitrary, it follows that $x \in J(S)$.

To prove $\overline{D}_{I} \supseteq J(S)$, we shall show that for all $x \in K(S)$ and $\delta>0$, we have $B(x,\delta) \cap D_{I} \ne \emptyset$. Let such $x$ and $\delta$ be given. By definition of $J(S)$, there exists $N_{\delta}$ such that for each $n \geq N_{\delta}$, $\frac{1}{n}\kappa(S^{n}) \cap B(x,\frac{\delta}{4}) \ne \emptyset$.  Let $n_{0} \in \mathbb{N}$ be large enough (to be specified later), such that $x_{n_{0}} \in \frac{1}{n_{n}}\kappa(S^{n_{0}})$ and $x_{n_{0}} \in B(x,\frac{\delta}{4})$. Denote by $g_{n_{0}}$ an element of $S^{n_{0}}$ such that $x_{n_{0}}=\frac{1}{n_{0}}\kappa(g_{n_{0}})$, and let $U_{n_{0}}$ be a neighbourhood of $g_{n_{0}}$ in $G$ such that $\frac{1}{n_{0}}\kappa(U_{n_{0}}) \subseteq B(x, \frac{\delta}{4})$. Take a compact $C$ of $\mathfrak{a}$ such that $\frac{1}{n}\kappa(S^{n}) \subseteq C$ for each $n \geq 1$. This is indeed possible since $S$ is bounded. Finally, put $\tilde{C}=\underset{x \in C}{\max}||x||$.

Denote by $\Gamma$ the Zariski dense sub-semigroup of $G$ generated by $S$ and let $r=r(\Gamma)$ be as given by Theorem \ref{AMS}. Fix $0<\epsilon \leq r$ such that $6 \epsilon \leq r$ and let $F=F_{(r,\epsilon)}$ be the finite subset of $\Gamma$ given by Theorem \ref{AMS}. For each $f \in F$, fix a neighbourhood $V_{f}$ of $f$ in $G$ as in Remark \ref{AMS.remark}. Let $f_{0}$ be an element of $F$ such that $g_{n_{0}}f_{0}$ is $(\theta_{\Gamma},r,\epsilon)$-proximal. Up to reducing $U_{n_{0}}$, we can suppose by Remark \ref{AMS.remark} that for every $g \in U_{n_{0}}$ and $f' \in V_{f_{0}}$, $gf'$ is $(\theta_{\Gamma},r,\epsilon)$-proximal. 

Furthermore, let $M$ be the compact subset of $\mathfrak{a}$ obtained by Corollary \ref{Cartan.stability}, applying it with $L=\overline{V}_{f}$. Put $K=K_{(\frac{r}{6},\epsilon)}$ the compact subset of $\mathfrak{a}$ given by Proposition \ref{SchottkyCartaniterate}. Fix $i_{0} \in \mathbb{N}$ such that $f_{0} \in S^{i_{0}}$, let $d_{3}=d_{3}(r)>0$ be as given by Corollary \ref{ppdaraltma} and denote $d_{7}=d_{3}\mathbb{P}(Y_{i_{0}} \in V_{f_{0}})>0$. Finally, set $\beta_{0}=\mathbb{P}(Y_{n_{0}} \in U_{n_{0}})>0$.

In Corollary \ref{ppdaraltma}, taking $E=U_{n_{0}}V_{f_{0}}$ and using it with $n_{1}=n_{0} +i_{0}$, we get an $(\theta_{\Gamma},\frac{r}{6},\epsilon)$-Schottky family $E_{n_{1}} \subseteq E$ such that 
\begin{equation}\label{rjust2}
\mathbb{P}(Y_{n_{1}} \in E_{n_{1}}) \geq d_{3}\mathbb{P}(Y_{n_{1}} \in E)
\end{equation}
Now, using Proposition \ref{SchottkyCartaniterate}, one sees that if $n_{0} \in \mathbb{N}$ satisfies $n_{0} \geq 16 \frac{i_{0}\tilde{C}+diam (M) + diam ( K)}{\delta} \vee N_{\delta}$, then for all $k \geq 1$, and $h_{1}, \ldots, h_{k} \in E_{n_{1}}$, we have $d(x_{n_{0}},\frac{ \kappa(h_{1}.\ldots. h_{k})}{n_{1}k}) < \frac{\delta}{2}$. Therefore, using this, the independence of random walk increments and (\ref{rjust2}), we have
\begin{equation*}
\begin{aligned}
& \mathbb{P}(\frac{1}{n_{1}k} \kappa (Y_{n_{1}k}) \in B(x_{n_{0}},\frac{\delta}{2})) \geq  \mathbb{P}(Y_{kn_{1}} \in E^{k}_{n_{1}}) \geq \mathbb{P}(Y_{n_{1}} \in E_{n_{1}})^{k} \geq \\ & d_{3}^{k} \mathbb{P}(Y_{n_{1}} \in E)^{k} \geq  d_{3}^{k} \mathbb{P}(X_{n_{1}}.\ldots.X_{i_{0}+1}\in U_{n_{0}} \; \text{and} \; Y_{i_{0}}\in V_{f_{0}})^{k}= \\& d_{3}^{k}\mathbb{P}(Y_{n_{0}} \in U_{n_{0}})^{k}\mathbb{P}(Y_{i_{0}} \in V_{f_{0}})^{k}\geq (\beta_{0} d_{7})^{k}>0
\end{aligned}
\end{equation*}
This readily implies that 
\begin{equation*}
\limsup_{m \rightarrow \infty} \frac{1}{m} \log \mathbb{P}(\frac{1}{m}  \kappa (Y_{m}) \in B(x_{n},\frac{\delta}{2})) \geq \frac{\log (\beta_{0}d_{7})}{n_{1}} > - \infty
\end{equation*}
Now, using the definition of LDP, by Theorem \ref{kanitsav1}, we get 
\begin{equation*}
\underset{y \in \overline{B(x,\frac{\delta}{2})}}{\inf} I(y) \leq - \frac{\log (\beta_{0}d_{7})}{n_{1}} < \infty
\end{equation*} In particular, $D_{I} \cap B(x,\delta) \neq \emptyset$, what we wanted to show.
\end{proof}

We are now ready to complete the proof of Theorem \ref{kanitsav3}. We note that in 1., the proof of $\vec{\lambda}_{\mu} \in \inte(D_{I})$ is the same as the proof of $\vec{\lambda}_{\mu} \in \inte(J(S))$ in \cite{Breuillard.Sert.joint.spectrum}, when the measure $\mu$ is supported on a bounded set $S$ generating a Zariski dense semigroup. This uses the non-degeneracy of the limit Gaussian distribution in central limit theorem (of Goldsheid-Guivarc'h \cite{Goldsheid.Guivarch}, Guivarc'h \cite{Guivarch} and Benoist-Quint \cite{BQ.central}) together with Abels-Margulis-Soifer's Theorem \ref{AMS} and Benoist estimates (in the form of Proposition \ref{SchottkyCartaniterate}). We omit its proof to avoid lengthy repetitions.

\begin{proof}[Proof of Theorem \ref{kanitsav3}]
1. Convexity of $D_{I}$ follows immediately from convexity of the rate function $I$. Thus, $D_{I}$ is convex by Theorem \ref{kanitsav1}. If $\mathbf{G}$ is semisimple, $\rm k=\mathbb{R}$ and $S$ is bounded, that $\inte(D_{I})\neq \emptyset$ follows by 2. and the fact that in this case $\inte(J(S))\neq \emptyset$ (see   \cite{Breuillard.Sert.joint.spectrum} or \cite{Sert.LDP.CRAS}). If $S$ is unbounded, then we can find a bounded subset $S_{0}$ of $S$ generating a Zariski dense sub-semigroup in $G$ and such that $\mu(S_{0})>0$. Let $\mu_{0}$ be the the probability measure obtained by restricting $\mu$ to $S_{0}$ and let $I_{0}$ be the LDP rate function given by Theorem \ref{kanitsav1} applied to $\mu_{0}$-random walk on $G$. Then, by the expression of a rate function in Theorem \ref{existLDP}, one sees that $D_{I_{0}} \subseteq D_{I}$ and hence we conclude as before. 

2. $\overline{D}_{I}=J(S)$ is proved in Proposition \ref{effective.support.proposition}. The second assertion $\ri(J(S))=\ri(D_{I})$ follows from this, since both sets $J(S)$ and $D_{I}$ are convex.

3. We show that $I$ is bounded above by $-\min_{g\in S}\log \mu(g)$ on $D_{I}$, the rest follows from lower semi-continuity of $I$. Let $x \in D_{I}$ so that $I(x) < \infty$. It follows by the expression of $I(x)$ in Theorem \ref{existLDP} that there exists a neighbourhood $O$ of $x$ in $\mathfrak{a}$ such that $\mathbb{P}(\frac{1}{n}\kappa(Y_{n}) \in O) \neq 0$, for all $n$ large enough. Therefore for all such $n$, there exist $g_{1}, \ldots, g_{n} \in S$, such that $\frac{1}{n}\kappa(g_{n}.\ldots.g_{1}) \in O$. Using the independence of random walk increments $X_{i}$'s, we get $\mathbb{P}(\frac{1}{n}\kappa(Y_{n}) \in O) \geq \mathbb{P}(X_{i}=g_{i}$ for each $i=1, \ldots, n) =\prod_{i=1}^{n} \mathbb{P}(X_{i}=g_{i}) \geq (\min_{g \in S}\mu(g))^{n}$. Now using again the expression of $I(x)$ in Theorem \ref{existLDP}, we conclude that $I(x) \leq - \min_{g \in S} \log \mu(g)$.
 
Finally, the last assertion is a classical fact on convex functions.
\end{proof}

\begin{remark}\label{spectrum.convergence.necessary.for.LDP.remark}
An interesting observation on the proof of 3. of the previous theorem is the following: (at least) when the support $S$ of $\mu$ is a finite set, the Hausdorff convergence of the sequence $\frac{1}{n}\kappa(S^{n})$ is a necessary condition (which is conjectured to hold without any assumptions on $S$) for an LDP to hold for the sequence $\frac{1}{n}\kappa(Y_{n})$ of random variables. This is relevant when one tries to generalize Theorem \ref{kanitsav1} to a random walk governed by a probability measure supported on arbitrary set.
\end{remark}

\section{LDP for Jordan projections}\label{section6}

In this section, we gather some results and examples on large deviations of Jordan projections and make a conjecture.

Although we know that the probabilistic behaviours of averages of Cartan and Jordan projections along a random walk $Y_{n}$ are very close (see below), in this article we are not able to prove an LDP for the sequence $\frac{1}{n}\lambda(Y_{n})$ of random variables. Indeed, the following observation of Benoist-Quint (\cite{BQpoly}) expresses this close behaviour of $\frac{1}{n}\kappa(Y_{n})$ and $\frac{1}{n}\lambda(Y_{n})$:
\begin{proposition}[Lemma 13.13. \cite{BQpoly}]
In the setting of Theorem \ref{kanitsav2}, for all $\epsilon>0$ there exists $c>0$ and $l_{0}$ such that for every $n \geq l \geq l_{0}$, we have
$$
\mathbb{P}(||\kappa(Y_{n})-\lambda(Y_{n})||>\epsilon l)\leq e^{-cl}
$$
\end{proposition}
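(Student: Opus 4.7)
The plan is to reduce the bound to a proximality estimate for $Y_{n}$ and then apply Proposition \ref{loxodromy.implies.Cartan.close.to.Jordan}. Fix $r,\delta>0$ small enough relative to $r(\Gamma)$ from Theorem \ref{AMS}. Proposition \ref{loxodromy.implies.Cartan.close.to.Jordan} provides a constant $C=C(r,\delta)$ such that any $(\theta_{\Gamma},r,\delta)$-proximal element $g\in\Gamma$ satisfies $||\kappa(g)-\lambda(g)||\leq C$. Hence, choosing $l_{0}\geq C/\epsilon$, for $l\geq l_{0}$ the event $\{||\kappa(Y_{n})-\lambda(Y_{n})||>\epsilon l\}$ is contained in the event that $Y_{n}$ fails to be $(\theta_{\Gamma},r,\delta)$-proximal, and it suffices to prove that the latter event has probability at most $e^{-cl}$ for some $c>0$ and all $n\geq l\geq l_{0}$.

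For this proximality bound, I would use the Schottky-family machinery of Section \ref{section2}. Using Zariski density, Theorem \ref{AMS} and Corollary \ref{ppdaraltma}, one produces an integer $k\geq 1$ and a bounded $(\theta_{\Gamma},r',\delta')$-Schottky family $E\subset\Gamma$ with $p:=\mu^{\ast k}(E)>0$, with $r',\delta'$ chosen small enough that iterated products of elements of $E$ remain $(\theta_{\Gamma},r,\delta)$-proximal by Theorem \ref{Best}. Split the last $l$ increments of $Y_{n}$ into $m=\lfloor l/k\rfloor$ independent blocks of length $k$; the number $N$ of blocks whose product lies in $E$ is binomial with parameters $(m,p)$, so by a Chernoff bound $\mathbb{P}(N<pm/2)\leq e^{-c_{1}l}$ for some $c_{1}>0$.

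The geometric heart of the argument is to show that on the event $\{N\geq pm/2\}$, the full product $Y_{n}$ is $(\theta_{\Gamma},r,\delta)$-proximal, in spite of the arbitrary non-$E$ factors that may appear between and around the $E$-blocks. The strategy is to single out two well-spaced $E$-blocks and use Lemma \ref{left.multiply.proximal}, via the Lipschitz parameter $L(\cdot)$, to absorb the surrounding factors: the attracting direction and repelling hyperplane of the concatenation are close to those of the innermost $E$-block pair, provided the surrounding factor has bounded $L$. The finite exponential moment of $\mu$ controls the $L$-parameter of surrounding subwords by a Chebyshev-type estimate, so the event that $L$ blows up faster than some $e^{\alpha l}$ has exponentially small probability and can be absorbed into the failure set.

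The main obstacle is the geometric gluing step above: translating a purely combinatorial count of $E$-blocks into genuine proximality of $Y_{n}$ while handling arbitrary surrounding random factors. A conceptually cleaner, more dynamical alternative is to invoke the exponential contraction of the $\mu$-walk on the flag varieties $G/P$ as developed in \cite{BQpoly}: with probability $1-O(e^{-cl})$, the attracting direction of $Y_{n}$ in each distinguished projective space, essentially determined by the last $\lfloor l/2\rfloor$ increments, and the repelling hyperplane, determined by the first $\lfloor l/2\rfloor$ increments, become uniformly well-separated, immediately yielding $(\theta_{\Gamma},r,\delta)$-proximality.
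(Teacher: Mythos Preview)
The paper does not actually prove this proposition: it is quoted verbatim as Lemma~13.13 of \cite{BQpoly} and used as a black box. So there is no ``paper's own proof'' to compare against; what can be assessed is whether your sketch is a viable proof and how it relates to the argument in \cite{BQpoly}.

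Your initial reduction is sound and standard: by Proposition~\ref{loxodromy.implies.Cartan.close.to.Jordan}, any $(\theta_{\Gamma},r,\delta)$-proximal element has $\|\kappa-\lambda\|\le C(r,\delta)$, so for $l\ge C/\epsilon$ the event in question is contained in $\{Y_{n}\text{ is not }(\theta_{\Gamma},r,\delta)\text{-proximal}\}$, and it suffices to bound the latter probability by $e^{-cn}$ (taking $l=n$ gives the sharpest requirement; smaller $l$ follows a fortiori).

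The Schottky-block approach you propose, however, has a genuine gap that you yourself identify as the ``main obstacle'': a large count of $E$-blocks among the last $l$ increments does \emph{not} by itself force $Y_{n}$ to be $(\theta_{\Gamma},r,\delta)$-proximal. The non-$E$ factors separating the blocks, and especially the prefix $X_{n-l}\cdots X_{1}$ of length $n-l$ (which may be arbitrarily large compared to $l$), can map the attracting point of an $E$-block arbitrarily close to the repelling hyperplane of the next, destroying proximality. Lemma~\ref{left.multiply.proximal} only helps when the Lipschitz constant $L(\gamma)$ of the intervening factor is controlled, and your Chebyshev step controls $L$ of subwords of length $O(l)$, not of the full prefix of length $n-l$. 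So as written the argument does not close.

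Your proposed alternative, invoking the exponential contraction of the walk on the flag varieties $G/P$, is exactly the route taken in \cite{BQpoly}: one shows (via the large-deviation estimates for the Iwasawa cocycle and the positivity of Lyapunov exponents) that with probability $1-O(e^{-cn})$ the Cartan attracting flag of $Y_{n}$ and the Cartan repelling flag are uniformly transverse, which is equivalent to $(\theta_{\Gamma},r,\delta)$-proximality. That argument is self-contained in \cite{BQpoly} and is the correct reference here; your Schottky-block sketch is not an independent proof without resolving the gluing step.
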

From this proposition, one deduces that the averages $\frac{1}{n}\kappa(Y_{n})$ and $\frac{1}{n}\lambda(Y_{n})$ satisfy the \textit{same} limit laws of law of large numbers (with the same limit), central limit theorem (with the same limit Gaussian distribution), law of iterated logarithm (with the same constant) and exponential decay of probabilities off the Lyapunov vector (i.e. if the sequence $\frac{1}{n}\lambda(Y_{n})$ also satisfies an LDP, its rate function has the same unique zero as that of $\frac{1}{n}\kappa(Y_{n})$). On the other hand, it does not seem possible to deduce the same LDP from this proposition. Nevertheless, we believe that the following holds
\begin{conjecture}\label{conjecture}
Let $G$ be a connected reductive real linear algebraic group and $\mu$ be a probability measure on $G$ whose support generates a discrete Zariski-dense semigroup in $G$. Then, the sequence $\frac{1}{n}\lambda(Y_{n})$ of random variables satisfies an LDP with the same rate function $I:\mathfrak{a}^{+} \to [0,\infty]$ given by Theorem \ref{kanitsav2}.
\end{conjecture}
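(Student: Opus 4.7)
The plan is to mimic the proof of Theorem \ref{kanitsav2} for the Cartan projection with the Jordan projection $\lambda$ in place of $\kappa$, and then to identify the resulting rate function with $I$ via the Benoist-Quint estimate stated just above the conjecture. For existence of the LDP, the argument of Theorem \ref{kanitsav1} goes through essentially unchanged: Lemma \ref{AMS.dispersion} combined with Proposition \ref{loxodromy.implies.Cartan.close.to.Jordan} places the walk within a bounded number of extra steps on $(\theta_{\Gamma},r,\epsilon)$-proximal elements, for which the Jordan and Cartan projections differ by a fixed compact set; Corollary \ref{ppdaraltma} restricts the walk to an $(\theta_{\Gamma},r,\epsilon)$-Schottky family; and, replacing Proposition \ref{SchottkyCartaniterate}, Benoist's Theorem \ref{Best} furnishes the Schottky iteration bound $\lambda(E^{k}) \subseteq k(\mathrm{co}(\lambda(E)) + N_{(r,\epsilon)})$. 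The same contradiction argument as in the proof of Theorem \ref{kanitsav1}, together with the analog of Lemma \ref{controlling.perturbations.uptoepsilon} for $\lambda$, then yields a weak LDP for $\lambda(Y_{n})/n$ with some convex rate function $J$. Exponential tightness, and hence a full LDP with proper $J$, follows as in Proposition \ref{expmoment.implies.exptight}, using that $\|\lambda(g)\|$ is dominated by $\|\kappa(g)\|$ pointwise (spectral radius bounded by operator norm via Lemma \ref{Cartan.norm.correspondance}).

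To identify $J$ with $I$, I would use the Benoist-Quint estimate bidirectionally. For any closed $F \subset \mathfrak{a}$ and $\delta > 0$, the inclusion of events
\begin{equation*}
\{\lambda(Y_{n})/n \in F\} \subseteq \{\kappa(Y_{n})/n \in F^{\delta}\} \cup \{\|\kappa(Y_{n}) - \lambda(Y_{n})\| > \delta n\},
\end{equation*}
and its symmetric analog, combined with $\mathbb{P}(\|\kappa - \lambda\| > \delta n) \leq e^{-c(\delta)n}$, give two-sided sandwich estimates relating $\mathbb{P}(\lambda/n \in F)$ to $\mathbb{P}(\kappa/n \in \cdot)$. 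Under a strong exponential moment hypothesis, I would transfer this to Laplace transforms: since $|\phi(\lambda(Y_{n})) - \phi(\kappa(Y_{n}))| \leq \|\phi\|_{\ast}\|\lambda - \kappa\|$, splitting expectations by Cauchy-Schwarz on the Benoist-Quint ``bad'' event yields, for each fixed $\phi$, $\Lambda^{\lambda}(\phi) = \Lambda(\phi)$ up to an error tending to $0$ as $\delta \to 0$. The Fenchel-Moreau duality used in the proof of Theorem \ref{kanitsav2} then gives $J = I$ on the common effective domain, and continuity on the interior extends this to equality everywhere.

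The principal obstacle is precisely this identification $J = I$: the Benoist-Quint bound provides only exponential, not super-exponential, concentration of $\|\kappa(Y_{n}) - \lambda(Y_{n})\|/n$ to zero, so the sequences $\kappa(Y_{n})/n$ and $\lambda(Y_{n})/n$ are not a priori exponentially equivalent in the standard sense of large deviation theory. The rate $c(\delta)$ may be smaller than $\inf_{F} I$ on the region of interest, preventing a direct closure argument, and the Laplace-transform route likewise requires controlling $\mathbb{E}[e^{2\phi(\lambda(Y_{n}))}]^{1/2}$ against $e^{-c(\delta)n/2}$, which is not automatic. The discreteness hypothesis on $\Gamma$, absent from the analogous Cartan statement, is almost certainly the key point here: it should either permit a quantitative sharpening of the Benoist-Quint estimate, for instance by exploiting that Jordan and Cartan projections take values in a discrete subset of $\mathfrak{a}$, or allow a direct route to $\Lambda^{\lambda} = \Lambda$ that bypasses the Laplace-transform comparison altogether.
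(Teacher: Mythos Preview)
This statement is a \emph{conjecture} in the paper, not a theorem; the author states explicitly that ``in this article we are not able to prove an LDP for the sequence $\frac{1}{n}\lambda(Y_{n})$.'' There is therefore no proof to compare against, and your proposal must stand on its own.

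The obstacle you flag at the end (identifying $J$ with $I$ via the Benoist--Quint bound, which is only exponential) is real, but the more basic gap occurs earlier, in your claim that the existence argument ``goes through essentially unchanged.'' Both Lemma \ref{AMS.dispersion} and Lemma \ref{controlling.perturbations.uptoepsilon} rest on the uniform continuity of the Cartan projection (Corollary \ref{Cartan.stability}): for $l$ in a fixed compact, $\kappa(gl)-\kappa(g)$ lies in a fixed compact of $\mathfrak{a}$. The Jordan projection has no such property. In $\SL(2,\R)$, take $g=\diag(a,a^{-1})$ with $a$ large and $l$ a rotation by $\pi/2$; then $gl$ is elliptic and $\lambda(gl)=0$ while $\lambda(g)=\log a$. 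This kills both steps: starting from $\{\lambda(Y_{n_0})\in n_0 O_1\}$ you know nothing about $\kappa(Y_{n_0})$, so Lemma \ref{AMS.dispersion} cannot be invoked, and after multiplying by an AMS element $f$ (or by the next increment $X_{n+1}$) the Jordan projection is uncontrolled; likewise the interpolation from the subsequence $n_1 k$ to all large $n$ in Lemma \ref{controlling.perturbations.uptoepsilon} fails outright. What survives is exactly what the paper proves in the proposition following the conjecture: going \emph{from} Cartan \emph{to} Jordan via Schottky families and Proposition \ref{loxodromy.implies.Cartan.close.to.Jordan} yields the one-sided bound $\tilde{J}\le I$, i.e.\ the LDP upper bound along a subsequence. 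The missing direction is the LDP lower bound for $\lambda$ at \emph{every} time $n$, and this is precisely where the lack of an analog of Corollary \ref{Cartan.stability} bites. Your instinct that the discreteness hypothesis is meant to tame phenomena like the elliptic example above is reasonable, but even under that hypothesis no mechanism producing the required stability of $\lambda$ under bounded perturbation is known; that is why the statement remains a conjecture.
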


\subsection{Domination of Jordan rate function and some examples}
Regarding this conjecture, the following proposition says that under the hypotheses of Theorem \ref{kanitsav1}, \textit{if} an LDP holds for $\frac{1}{n}\lambda(Y_{n})$, then one side of the equality of rate functions in the above conjecture is satisfied:
\begin{proposition}
Under the same hypotheses as Theorem \ref{kanitsav1}, for $x \in \mathfrak{a}^{+}$, setting $$\tilde{J}(x)=\sup_{\underset{x \in O}{O \subset \mathfrak{a}\, \text{open}}}-\limsup_{n \to \infty} \mathbb{P}(\frac{1}{n}\lambda(Y_{n}) \in O)$$ we have $\tilde{J}(x)\leq I(x)$, where $I$ is the rate function given by Theorem \ref{kanitsav1}. In particular, if the sequence $\frac{1}{n}\lambda(Y_{n})$ satisfies an LDP with rate function $J:\mathfrak{a}^{+}\to[0,\infty]$, then we have $J(x) \leq I(x)$ for all $x \in \mathfrak{a}^{+}$.
\end{proposition}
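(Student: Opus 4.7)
The plan is to convert a lower bound on the Cartan large-deviation probability $\mathbb{P}(\tfrac{1}{n}\kappa(Y_n) \in O_1)$ into a lower bound on the Jordan large-deviation probability $\mathbb{P}(\tfrac{1}{n_1}\lambda(Y_{n_1}) \in O)$ for some $n_1 \in [n, n+i_0]$, at the cost only of a multiplicative constant. The two ingredients are the Abels--Margulis--Soifer-type Lemma \ref{AMS.dispersion}, which forces $Y_{n_1}$ to be $(\theta_\Gamma,r,\epsilon)$-proximal with uniformly controlled loss in probability, and Proposition \ref{loxodromy.implies.Cartan.close.to.Jordan}, which guarantees that on such proximal elements the Jordan and Cartan projections differ by a bounded amount, hence negligibly after rescaling by $1/n_1$.

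Fix $x \in \mathfrak{a}^+$; it suffices to show that for every open neighbourhood $O$ of $x$,
\[
\limsup_{n\to\infty} \frac{1}{n}\log \mathbb{P}\bigl(\tfrac{1}{n}\lambda(Y_n) \in O\bigr) \;\geq\; -I(x),
\]
since taking the supremum over such $O$ then gives $\tilde{J}(x) \leq I(x)$. Pick open convex neighbourhoods $x \in O_1 \subset O_2 \subset O$ with super-strict inclusions. Let $r = r(\Gamma)$, fix $\epsilon \leq r$, and let $C$, $i_0$, $d_1$ be as provided by Lemma \ref{AMS.dispersion} and $M_{(r,\epsilon)}$ as in Proposition \ref{loxodromy.implies.Cartan.close.to.Jordan}. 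Using Lemma \ref{Pvear}, choose $N_0$ so that for every $n \geq N_0$ and every $n_1 \in [n, n+i_0]$,
\[
\tfrac{n}{n_1}\,O_1 + \tfrac{1}{n_1}\bigl(C + M_{(r,\epsilon)}\bigr) \;\subseteq\; O.
\]

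Given $\delta > 0$, the weak LDP lower bound of Theorem \ref{kanitsav1} applied to $O_1$ yields $N_\delta \geq N_0$ with $\mathbb{P}(\tfrac{1}{n}\kappa(Y_n) \in O_1) \geq e^{-n(I(x)+\delta)}$ for all $n \geq N_\delta$. Applying Lemma \ref{AMS.dispersion} with $R = nO_1$ and $n_0 = n$ then produces $n_1 = n_1(n) \in [n, n+i_0]$ satisfying
\[
\mathbb{P}\bigl(\kappa(Y_{n_1}) \in nO_1 + C,\ Y_{n_1}\ \text{is}\ (\theta_\Gamma,r,\epsilon)\text{-proximal}\bigr) \;\geq\; d_1\, e^{-n(I(x)+\delta)}.
\]
On this event, Proposition \ref{loxodromy.implies.Cartan.close.to.Jordan} gives $\lambda(Y_{n_1}) \in \kappa(Y_{n_1}) + M_{(r,\epsilon)} \subseteq nO_1 + C + M_{(r,\epsilon)}$, and the choice of $N_0$ forces $\tfrac{1}{n_1}\lambda(Y_{n_1}) \in O$. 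Hence
\[
\frac{1}{n_1(n)}\log \mathbb{P}\bigl(\tfrac{1}{n_1(n)}\lambda(Y_{n_1(n)}) \in O\bigr) \;\geq\; \frac{\log d_1}{n_1(n)} - \frac{n}{n_1(n)}(I(x)+\delta),
\]
and the right-hand side tends to $-(I(x)+\delta)$ as $n \to \infty$, using $n_1(n)/n \to 1$.

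Since $n_1(n) \to \infty$, the general inequality $\limsup_m h_m \geq \limsup_n h_{n_1(n)} \geq \liminf_n h_{n_1(n)}$ (valid for any real sequence $(h_m)$ and any map $n_1\colon \mathbb{N}\to\mathbb{N}$ with $n_1(n)\to\infty$) gives
\[
\limsup_{m \to \infty} \tfrac{1}{m}\log \mathbb{P}\bigl(\tfrac{1}{m}\lambda(Y_m) \in O\bigr) \;\geq\; -(I(x)+\delta),
\]
and letting $\delta \to 0$ finishes the argument. The only delicate step is purely bookkeeping: absorbing the bounded additive perturbations $C$ and $M_{(r,\epsilon)}$ and the time shift of at most $i_0$ into the slightly enlarged neighbourhood $O$, which is exactly what the super-strict inclusions combined with Lemma \ref{Pvear} are designed to do. Note in particular that no moment assumption on $\mu$ is used; the Benoist--Quint exponential coincidence of $\kappa(Y_n)$ and $\lambda(Y_n)$ quoted before the statement would give a shorter proof, but only under a finite exponential moment, so we bypass it via the AMS dispersion argument.
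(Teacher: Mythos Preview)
Your proof is correct and takes a more direct route than the paper's. Both arguments begin the same way: use Lemma~\ref{AMS.dispersion} to pass from a Cartan-probability lower bound at time $n$ to a lower bound at some nearby time $n_1$ on the event that $Y_{n_1}$ is $(\theta_\Gamma,r,\epsilon)$-proximal, with only a multiplicative loss. The paper then continues as in the existence proof of the LDP: it further restricts to a Schottky family via Corollary~\ref{ppdaraltma} and iterates, invoking Theorem~\ref{Best} to control $\lambda(Y_{tn'_k})$ for products in $E_k^t$. You instead stop after the AMS step and apply Proposition~\ref{loxodromy.implies.Cartan.close.to.Jordan} directly, using that on any single $(\theta_\Gamma,r,\epsilon)$-proximal element of $\Gamma$ the Cartan and Jordan projections differ by a bounded amount; rescaling by $1/n_1$ and absorbing the bounded perturbations $C$ and $M_{(r,\epsilon)}$ into the slightly larger neighbourhood $O$ (via Lemma~\ref{Pvear}) finishes the job. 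Your approach avoids the Schottky-and-iterate machinery entirely, which is not needed for this inequality; the paper's route simply reuses the template already set up for Theorem~\ref{kanitsav1}. Your closing remark is also apt: the Benoist--Quint coincidence estimate would shortcut everything but requires an exponential moment, whereas your argument (like the paper's) works under the hypotheses of Theorem~\ref{kanitsav1} alone. One small point worth making explicit: you are implicitly assuming $I(x)<\infty$, as otherwise the inequality $\tilde J(x)\leq I(x)$ is trivial.
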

This proposition is proved along the same lines as the existence of LDP in Theorem \ref{kanitsav1}. We provide a brief proof.

\begin{proof}
By Theorem \ref{existLDP}, it suffices to show that for any open set $O'$ super-strictly containing $O$, we have 
\begin{equation}\label{rjust3}
\alpha:=-\liminf_{n \to \infty} \frac{1}{n}\log \mathbb{P}(\frac{1}{n}\kappa(Y_{n}) \in O) \geq - \limsup_{n \to \infty} \frac{1}{n} \log \mathbb{P}(\frac{1}{n}\lambda(Y_{n}) \in O')
\end{equation}
Let $0<\delta<\alpha$ be small enough and $n_{k}$ be a sequence such that for all $k \geq 1$, $\mathbb{P}(\frac{1}{n_{k}} \kappa(Y_{n_{k}})\in O)\geq e^{-(\alpha+\delta)n_{k}}$. Apply, Lemma \ref{AMS.dispersion} for some $\epsilon>0$ small enough, and Corollary \ref{ppdaraltma} to get for every $k\geq 1$ a sequence $n'_{k}$ (such that for some $i_{0}$ depending only on $\mu$ and for every $k \geq 1$, we have $|n_{k}-n'_{k}|\leq i_{0}$) and an $(\theta_{\Gamma},r,\epsilon)$-Schottky family $E_{k}$ (where $\Gamma$ denotes the semigroup generated by the support of $\mu$ as usual) such that, for all $k$ large enough we have
$$
\mathbb{P}(\frac{1}{n'_{k}} \kappa(Y_{n'_{k}}) \in O_{1} \, \text{and} \, Y_{n'_{k}} \in E_{k}) \geq d.e^{-(\alpha+\delta)n'_{k}}
$$ where $O_{1}$ is an open subset of $\mathfrak{a}^{+}$ containing $O$ and super-strictly contained in $O'$, and $d>0$ is a positive constant which only depends on $\mu$ and $\epsilon$. Now, by Theorem \ref{Best} and independence of random walk increments, for each $t \in \mathbb{N}$ and $k\geq 1$ large enough, we have
\begin{equation*}
\mathbb{P}(\frac{1}{t.n'_{k}}\lambda(Y_{t.n'_{k}})\in O') \geq \mathbb{P}(Y_{t.n'_{k}} \in E_{k}^{t}) \geq \mathbb{P}(Y_{n'_{k}} \in E_{k})^{t}\geq d^{t}e^{-(\alpha+\delta)n'_{k}.t}
\end{equation*}
Since $\delta>0$ can be taken arbitrarily small, this indeed proves (\ref{rjust3}) and finishes the proof.
\end{proof}

\begin{remark}
If $e$ denotes the identity in $G$ and $\mu(e)>0$, one can strengthen this proposition by changing $\limsup$ to $\liminf$ in the definition of $\tilde{J}$.
\end{remark}

In the following, we give some examples where the above conjecture holds true. As usual, $G$ denotes a $\rm k$-points of a connected reductive linear algebraic group defined over a local field $\rm k$.\\[-8pt]

\begin{example}
1. The first example is in a sense trivial, but we mention it to contrast it with the second example: let $\theta \subseteq \Pi$ and $r>\epsilon>0$ be given and let $E$ be an $(\theta,r,\epsilon)$-Schottky family in $G$. Let $\Gamma$ be the semigroup generated by $E$ and let $\mu$ be a finitely supported probability measure supported on $\Gamma$. Then the conclusion of Conjecture \ref{conjecture} holds for the $\mu$-random walk. Indeed, for every $\gamma \in \Gamma$, by Proposition  \ref{loxodromy.implies.Cartan.close.to.Jordan}, one has $||\kappa(\gamma)-\lambda(\gamma)||\leq M$, where $M$ depends only on $\Gamma$ and the assertion follows easily from this. Note that this example is a purely semigroup case i.e. such a $\Gamma$ never contains an element and its inverse. Note also that we do not suppose that $\Gamma$ is Zariski dense, indeed for a $\mu$ supported on such a semigroup, one does not need the Zariski density hypothesis for the conclusion of Theorem \ref{kanitsav1} to hold. \\[3pt]
2. The following situation is more interesting since one does not have the uniform closeness of Cartan and Jordan projections as above: let $E$ be a \textit{free} $(\theta,r,\epsilon)$-Schottky family in $G$ and let $E'$ be a subset of $E\cup E^{-1}$. Let $\Gamma$ be the semigroup generated by $E'$ and $\mu$ be a finitely supported probability measure on $\Gamma$. Then, the conclusion of Conjecture \ref{conjecture} holds for the $\mu$-random walk. This follows from an elementary calculation using essentially the fact that on a cyclically reduced element (seen as a word in the letters of $E^{-1}\cup E$) the Cartan and Jordan projections are uniformly close (i.e. Proposition \ref{loxodromy.implies.Cartan.close.to.Jordan}) together with Corollary \ref{Cartan.stability}.
\end{example}


\begin{thebibliography}{99}


\bibitem{AMS} H. Abels, G.A. Margulis and G.A. Soifer,  Semigroups containing proximal linear maps, Israel journal of mathematics 91(1-3) (1995), 1-30.

\bibitem{Bahadur} R.R. Bahadur, Some limit theorems in statistics, Society for industrial and applied mathematics, (1971).

\bibitem{Benoist1} Y. Benoist, Actions propres sur les espaces homog\`{e}nes r\'{e}ductifs, Annals of mathematics, (1996), 315-347.

\bibitem{Benoist2} Y. Benoist, Propri\'{e}t\'{e}s asymptotiques des groupes lin\'{e}aires, Geometric \& Functional Analysis GAFA 7(1) (1997), 1-47.

\bibitem{BLuminy} Y. Benoist, Lectures notes of \'{E}cole \'{E}urop\'{e}enne de th\'{e}orie des groupes, (1997), 1-70. 

\bibitem{Benoist3} Y. Benoist, Propri\'{e}t\'{e}s asymptotiques des groupes lin\'{e}aires, II. Advanced Studies Pure Math 26 (2000), 33-48.

\bibitem{Benoist.Labourie}  Y. Benoist and F. Labourie, Sur les diff\'{e}omorphismes d'Anosov affines \`{a} feuilletages stable et instable diff\'{e}rentiables, Inventiones mathematicae 111(1) (1993), 285-308.

\bibitem{BQpoly} Y. Benoist and J.F. Quint, Random walks on reductive groups, Springer International Publishing, (2016).

\bibitem{BQ.central} Y. Benoist and J.F. Quint, Central limit theorem for linear groups., Ann. Probab. 44(2) (2016), 1308-1340.

\bibitem{Bougerol.Lacroix} P. Bougerol and J. Lacroix,  Products of random matrices with applications to Schr\"{o}dinger operators, Progress in Probability and Statistics, 8. Birkh\"{a}user Boston Inc., Boston, MA, (1985).

\bibitem{Breuillard.Gelander} E. Breuillard and T. Gelander, A topological Tits alternative, Annals of mathematics, (2007), 427-474.

\bibitem{Breuillard.Sert.joint.spectrum} E. Breuillard and C. Sert, The joint spectrum, (in preparation).

\bibitem{Bruhat.Tits} F. Bruhat and J. Tits, Groupes r\'{e}ductifs sur un corps local, Publ. Math. IHES 41 (1972), 5-251, and 60 (1984), 5-184.

\bibitem{Dembo.Zeitouni} A. Dembo and O. Zeitouni, Large deviations techniques and applications (Vol. 38), Springer Science \& Business Media, (2009).

\bibitem{Furstenberg.boundary.theory} H. Furstenberg,  Boundary theory and stochastic processes on homogeneous spaces. Harmonic analysis on homogeneous spaces 26 (1973), 193-229.

\bibitem{Furstenberg.Kesten} H. Furstenberg and H. Kesten, Products of random matrices, the Annals of Mathematical Statistics 31(2) (1960), 457-469.

\bibitem{Goldsheid.Margulis} I. Goldsheid and G. Margulis, Lyapunov Indices of a Product of Random Matrices, Russian Math. Surveys 44 (1989), 11-81.

\bibitem{Goldsheid.Guivarch} I.Y. Goldsheid and Y. Guivarc'h, Zariski closure and the dimension of the Gaussian law of the product of random matrices, I. Probability theory and related fields 105(1) (1996), 109-142.

\bibitem{Guivarch} Y. Guivarc'h, On the spectrum of a large subgroup of a semisimple group, Journal of modern dynamics 2(1) (2008), 15-42.

\bibitem{Guivarch.LePage} Y. Guivarc'h and E. Le Page, Spectral gap properties for linear random walks and Pareto's asymptotics for affine stochastic recursions, Annales de l'Institut Henri Poincar\'{e}, Probabilit\'{e}s et Statistiques 52(2) (2016), 503-574.

\bibitem{LePage} E. Le Page, Th\'{e}or\`{e}mes limites pour les produits de matrices al\'{e}atoires, in Probability measures on groups, Springer Berlin Heidelberg, (1982), 258-303.

\bibitem{Prasad} G. Prasad, $\mathbb{R}$-regular elements in Zariski dense subgroups, The Quarterly Journal of Mathematics 45(4) (1994), 542-545.

\bibitem{Rota.Strang} G.C. Rota and W. Strang, A note on the joint spectral radius, (1960).

\bibitem{Quint.cones} J.F. Quint, C\^{o}nes limites des sous-groupes discrets des groupes r\'{e}ductifs sur un corps local, Transformation groups 7(3) (2002), 247-266.

\bibitem{Quint.divergence} J.F. Quint, Divergence exponentielle des sous-groupes discrets en rang sup\'{e}rieur. Commentarii Mathematici Helvetici 77(3) (2002),  563-608.
 
\bibitem{Sert.LDP.CRAS} C. Sert, Joint spectrum and large deviation principle for random matrix products. Comptes Rendus Mathematique, 355(6) (2017), 718-722.

\bibitem{Tits} J. Tits, Free subgroups in linear groups. Journal of Algebra 20(2) (1972), 250-270.

\bibitem{Titsreplin} J. Tits, Repr\'{e}sentations lin\'{e}aires irr\'{e}ductibles d'un groupe r\'{e}ductif sur un corps quelconque, Journal f\"{u}r die reine und angewandte Mathematik 247 (1971), 196-220.


\bibitem{Tutubalin} V.N. Tutubalin, A central limit theorem for products of random matrices and some of its applications, in Symposia Mathematica Vol. 21 (1977), 101-116.
\end{thebibliography}
\end{document}